\documentclass[11pt,a4paper]{article}
\usepackage[left=2.0cm, top=2.45cm,bottom=2.45cm,right=2.0cm]{geometry}
\usepackage{mathtools,amssymb,amsthm,mathrsfs,calc,graphicx,xcolor,cleveref,dsfont,tikz,pgfplots,bm,todonotes,calc}
\usepackage[british]{babel}
\usepackage{amsfonts}              
\usepackage[T1]{fontenc}
\usepackage{bbm}
\usepackage{url}

\usepackage{verbatim}

\numberwithin{equation}{section}

\newtheorem {theorem}{Theorem}[section]
\newtheorem {proposition}[theorem]{Proposition}
\newtheorem {lemma}[theorem]{Lemma}

{\theoremstyle{definition}

}
{\theoremstyle{theorem}
\newtheorem {remark}[theorem]{Remark}
\newtheorem {example}[theorem]{Example}
}

\newcommand{\dint}{\textup{d}}

\newcommand{\conv}{\textup{conv}}

\newcommand{\vol}{\mathrm{vol}}

\def\EE{\mathbb{E}}

\def\LL{\mathbb{L}}

\def\NN{\mathbb{N}}

\def\PP{\mathbb{P}}

\def\RR{\mathbb{R}}
\def\SS{\mathbb{S}}

\def\XX{\mathbb{X}}

\makeatletter
\let\@fnsymbol\@alph
\makeatother

\begin{document}

\title{\bfseries Limit laws for longest edges in empty region graphs}

\author{Holger Sambale\footnotemark[1],\;\; Matthias Schulte\footnotemark[2],\;\; and Christoph Th\"ale\footnotemark[3]}


\date{}
\renewcommand{\thefootnote}{\fnsymbol{footnote}}
\footnotetext[1]{Bielefeld University, Germany. Email: hsambale@math.uni-bielefeld.de}

\footnotetext[2]{Hamburg University of Technology, Germany. Email: matthias.schulte@tuhh.de}

\footnotetext[3]{Ruhr University Bochum, Germany. Email: christoph.thaele@rub.de}

\maketitle

\begin{abstract} \noindent Empty region graphs are graphs whose vertices are points in $\mathbb{R}^d$ and where two vertices are connected by an edge whenever some associated region does not contain any other vertices. We investigate the asymptotic behaviour of long edges in empty region graphs generated by a stationary Poisson process in $\mathbb{R}^d$. {Letting} the intensity of the underlying Poisson process tend to infinity, we consider the associated point process of edge midpoints, suitably transformed edge lengths, and directions of the edges. We prove that it converges in distribution to a Poisson process on $\mathbb{R}^d \times \mathbb{R}\times\mathbb{L}^d$, where $\mathbb{L}^d$ is the space of lines in $\mathbb{R}^d$ through the origin, and that the suitably transformed length of the longest edge with midpoint in an observation window converges in distribution to a Gumbel distributed random variable. Our approach yields explicit error bounds in Kantorovich--Rubinstein distance for the point process convergence {when restricting to an observation window} and in Kolmogorov distance for the maximal edge length. The results apply uniformly to a broad class of empty region graphs, including the Gabriel graph, the relative neighbourhood graph, the beta-skeleton graph, the Mastercard graph, and the Pacman graph.
\\[2mm]
{\bf Keywords:} Empty region graph, geometric extreme value theory, Gumbel distribution, longest edge, Poisson approximation, Poisson process approximation, rates of convergence, stochastic geometry.\\
{\bf MSC:} 05C80, 60D05, 60F05, 60G55, 60G70.
\end{abstract}


\section{Introduction and main results}

\subsection{Introductory notes}

Geometric random graphs have been the subject of extensive study over the past decades as they provide a natural mathematical framework for describing spatial networks in stochastic geometry, computational geometry, and applied fields such as telecommunications and biological modelling. Given a random collection of points in Euclidean space, one typically connects two points according to some geometric rule, leading to a wide range of random graph models such as the Gilbert graph, the Delaunay graph, or the Gabriel graph. These models exhibit rich structural properties and have been widely investigated, for example, from the perspective of connectivity, cluster sizes, degree distributions, or total edge lengths.

A prominent class of such random geometric graphs are {so-called} \emph{empty region graphs}, where two points are connected if a prescribed geometric region associated with the pair contains no further points \cite{CardintalColletteLangerman,DevillersEmptyRegion21,MitchellMulzer}. These graphs belong to the broad family of proximity graphs, where the presence of edges is determined by local geometric constraints \cite{MathiesonMoscato}. Examples include the Gabriel graph, the relative neighbourhood graph, and the {beta-skeleton} graph, see Section \ref{sec:ProxGraphs} below for detailed descriptions and illustrations. These and various other constructions arise in algorithmic and computational geometry, morphology, computer vision, geographic analysis, and pattern classification \cite{AryaMount,Jaromczyk_Toussaint,KirkpatrickRadke}. Despite their diverse definitions, empty region graphs share a common structural feature: the presence of long edges is governed by rare events, where unusually large empty regions occur. This makes them a natural setting in which to apply methods from extreme value theory and Poisson approximation to study extremal edge lengths.

In the present work, we consider the asymptotic behaviour of longest edges in empty region graphs generated by a stationary Poisson process in $\RR^d$. Our focus lies on the distribution of the longest edges and on the Poissonian structure emerging in the limit when the edge midpoints and the suitably transformed edge lengths as well as the edge directions are recorded, where the latter are elements in $\LL^d$, the space of all lines through the origin. More precisely, {letting} the intensity of the underlying Poisson process tend to infinity, we show that the associated point process of midpoints, transformed edge lengths, and edge directions converges in distribution to a Poisson process on $\RR^d \times \RR \times\LL^d$. {When restricting to edges with midpoints in an observation window, we obtain explicit error bounds in the Kantorovich--Rubinstein distance and} Gumbel limit laws for the length of the longest edge, together with associated rates of convergence for the Kolmogorov distance.

The study of longest edges in geometric random graphs and related models has a long tradition in stochastic geometry. Early work of Henze~\cite{Henze82} established limit laws for maxima of nearest-neighbour distances, and Penrose~\cite{Penrose97} analysed the asymptotic behaviour of the longest edge in the random minimal spanning tree. More recently, extreme-value methods have been applied to other structures arising in stochastic geometry, such as Poisson--Voronoi or Delaunay tessellations (see, e.g., \cite{CalkaChenavier14,Otto25}) and large $k$th-nearest neighbour balls (see \cite{ChenavierHenzeOtto} and the references therein). In parallel, limit laws for longest edges have also been studied in probabilistic network models, such as long-range percolation~\cite{RousselleSoenmezLRP} and soft random geometric graphs~\cite{RousselleSoenmezSRGG}. Together, these contributions underline the role played by extremal edges in understanding the connectivity and geometry of spatial random graphs. They motivate the investigation of similar questions for empty region graphs, which are in the focus of the present paper. In contrast, cumulative edge length functionals, where all edges of a class of empty region graphs are taken into account, are studied in \cite{GJLR}. In that paper, the authors derive quantitative bounds for the normal approximation of such functionals employing stabilisation arguments.

On a technical level, the quantitative Poisson process limit theorem developed in this paper relies on a very general Poisson process approximation bound from \cite{BSY}. In contrast, the Gumbel limit laws for the length of the longest edge are derived from a new Poisson approximation bound, which is of independent interest. This new bound is obtained by combining arguments from \cite{BSY} with the classical Chen--Stein method for Poisson approximation. 
The main part of the {proofs} then relies on delicate estimates for integrals and on lower bounds for intersection volumes of sets under translations or rigid motions. This task is technically demanding and requires arguments, going beyond those needed in the study of normal approximations for {cumulative edge length functionals.}

\subsection{Empty region graphs}\label{sec:ProxGraphs}

Before we introduce the class of graphs under consideration, let us fix some notation. The volume (i.e., $d$-dimensional Lebesgue measure) of a set $W\subset\RR^d$ is denoted by ${\rm vol}(W)$. For two points $x,y\in\RR^d$ we denote by $[xy]$ the line segment connecting $x$ with $y$ and by $\|x-y\|$ the Euclidean length of $[xy]$. Moreover, we write $\langle x,y\rangle$ for the standard scalar product of $x$ and $y$. For $x\in\RR^d$ and $r>0$ we let $B^d(x,r)$ be the closed ball centred at $x$ with radius $r$. {Furthermore, we introduce the shorthand notation $\kappa_d:={\rm vol}(B^d(0,1))=\pi^{d/2}/\Gamma(d/2+1)$ and put $\SS^{d-1}:=\{x\in\RR^d:\|x\|=1\}$.}

To introduce the class of random graphs we study in this article, let $\eta_t$ be a stationary Poisson process with intensity $t\geq 2$ on $\RR^d$ for some fixed space dimension $d\geq 1$. We remark that the choice of $2$ as a lower bound for $t$ is arbitrary, any number $>1$ could be used instead. With any two distinct points $x,y\in\RR^d$ we associate a compact set $S(x,y)\subset\RR^d$ such that $S(x,y)=S(y,x)$ and
\begin{equation}\label{eq:gamma}
    \vol(S(x,y)) = \gamma\|x-y\|^d
\end{equation}
{for a constant $\gamma\in(0,\infty)$.} Based on the sets $(S(x,y))_{x,y\in\mathbb{R}^d}$, we construct the \textit{empty region graph} $G_S(\eta_t)$  with vertex set $\eta_t$ as follows. We include an edge $[xy]$ between any two different points $x$ and $y$ of $\eta_t$ if and only if the associated set $S(x,y)$ contains no other point of $\eta_t$. Note that $x$ and $y$ may belong to $S(x,y)$ but do not have to.

In this article, we mainly focus on certain types of empty region graphs which we present next. All of them are classical examples of geometric random graphs which have been widely discussed in the literature, mostly in the plane, cf.\ \cite{AryaMount,CardintalColletteLangerman,DevillersEmptyRegion21,Devroye,Jaromczyk_Toussaint,KirkpatrickRadke,MathiesonMoscato,MitchellMulzer}. Simulations of these graphs are shown in Figures \ref{fig:Simulations1}--\ref{fig:Simulations3}, while illustrations of the regions $S(x,y)$ are provided in Figures \ref{fig:Ex123}--\ref{fig:Ex67}.

\begin{figure}[t]
    \centering
    \includegraphics[width=0.30\linewidth]{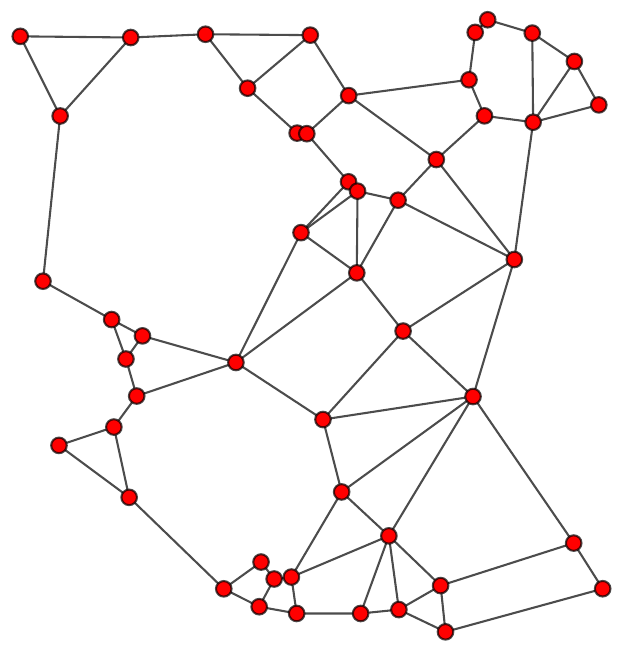}\qquad
    \includegraphics[width=0.30\linewidth]{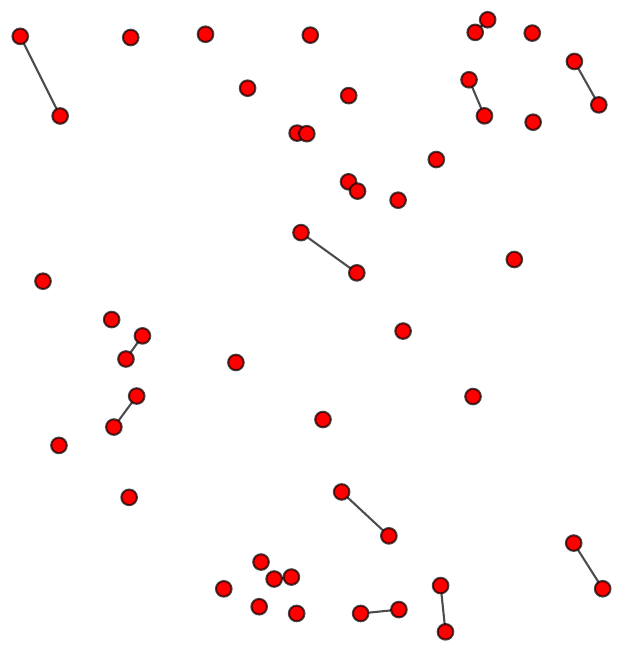}\qquad
    \includegraphics[width=0.30\linewidth]{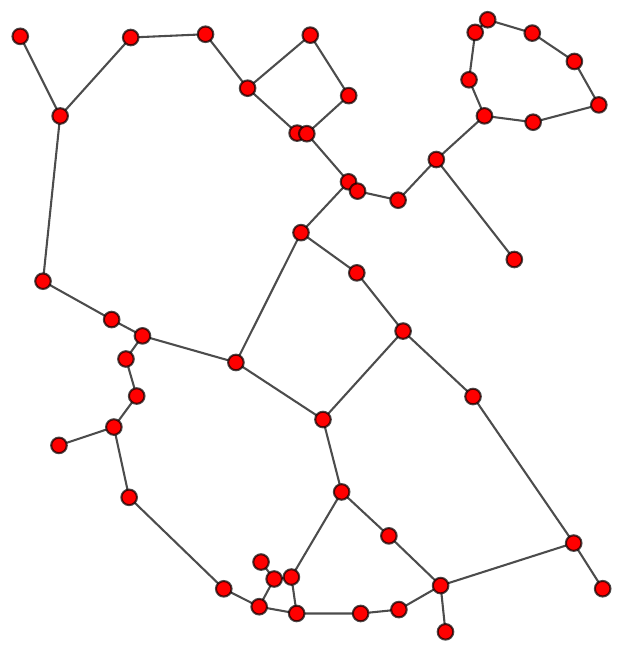}
    \caption{The Gabriel graph (left), the strong nearest neighbour graph (middle), and the relative neighbourhood graph (right) generated by the same set of points.}
    \label{fig:Simulations1}
\end{figure}

\begin{figure}[t]
    \centering
    \includegraphics[width=0.3\linewidth]{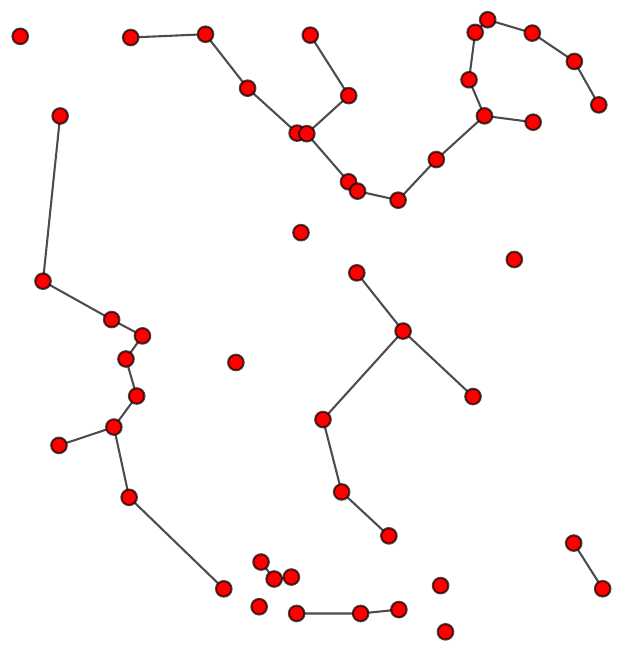}\qquad
    \includegraphics[width=0.3\linewidth]{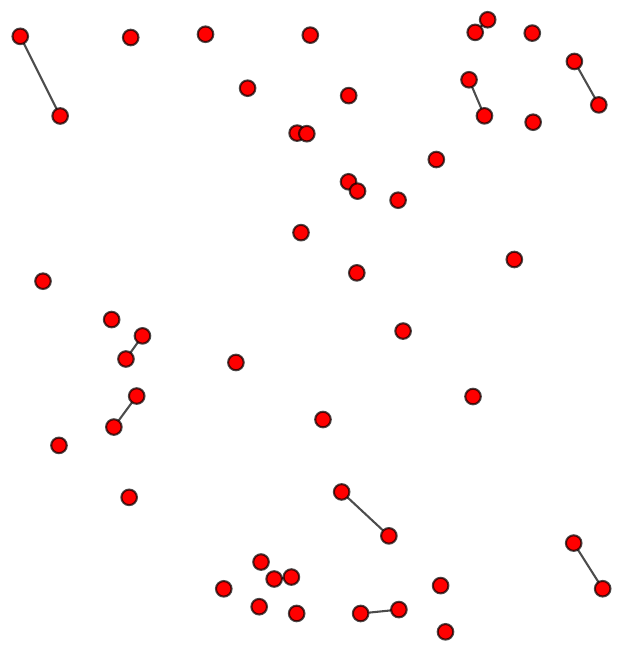}\qquad
    \includegraphics[width=0.3\linewidth]{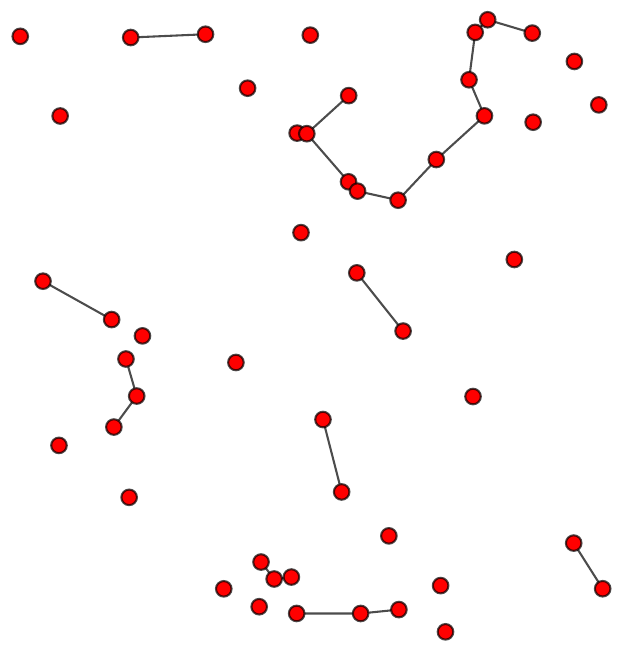}
    \caption{The {beta-skeleton} graph with $\beta=5$ (left), the Mastercard graph (middle), and the truncated slab graph (right) generated by the same set of points.}
    \label{fig:Simulations2}
\end{figure}

\begin{example}[Gabriel graph]\label{ex:Gabriel}\rm
Let $S(x,y):=B^d((x+y)/2,\|x-y\|/2)$ be the $d$-dimensional closed ball whose diameter coincides with the line segment $[xy]$. Then the resulting empty region graph is the classical \textit{Gabriel graph}. In this case, relation \eqref{eq:gamma} is satisfied with $\gamma=\kappa_d/2^d$.
\end{example}

\begin{example}[Strong nearest neighbour graph]\label{ex:StrongNNG}\rm 
Let $S(x,y):=B^d(x,\|x-y\|)\cup B^d(y,\|x-y\|)$ be the union of the two balls with radius $\|x-y\|$ centred at $x$ and $y$, respectively. Then $G_S(\eta_t)$ becomes the \textit{strong nearest neighbour graph}. While in the nearest neighbour graph each vertex is connected to its nearest neighbour, in the strong nearest neighbour graph only edges between so-called mutual nearest neighbours (i.e., one point is the nearest neighbour of the other and vice versa) are present. Here, relation \eqref{eq:gamma} is satisfied with $\gamma={\rm vol}(B^d(0,1)\cup B^d(e_1,1))$, where $e_1:=(1,0,\ldots,0)\in\mathbb{R}^d$.
\end{example}

\begin{example}[Relative neighbourhood graph]\label{ex:RelNNG}\rm 
Suppose that $S(x,y):=B^d(x,\|x-y\|)\cap B^d(y,\|x-y\|)$ is the intersection of two balls with radius $\|x-y\|$ centred at $x$ and $y$, respectively. Then the empty region graph $G_S(\eta_t)$ is the \textit{relative neighbourhood graph} and we have that \eqref{eq:gamma} is satisfied with $\gamma={\rm vol}(B^d(0,1)\cap B^d(e_1,1))$ with $e_1$ as in the previous example. {In this graph, two points are connected by an edge if and only if there is no third point that is closer to both of them than they are to each other.} {The relative neighbourhood graph} is almost surely connected as a consequence of \cite[Theorem 1]{Toussaint1980} and triangle free by \cite[Theorem 17]{CardintalColletteLangerman}.
\end{example}

\begin{example}[{Beta-skeleton graph}]\label{ex:BetaSkel}\rm 
Fix $\beta\in[1,\infty)$ and put
\[
S_\beta(x,y) := B^d\Big(\Big(1-{\beta\over 2}\Big)x+{\beta\over 2}y,{\beta\over 2}\|x-y\|\Big)\cap B^d\Big(\Big(1-{\beta\over 2}\Big)y+{\beta\over 2}x,{\beta\over 2}\|x-y\|\Big)\,.
\]
The corresponding empty region graph $G_{S_\beta}(\eta_t)$ is the {\textit{beta-skeleton graph}} from \cite{KirkpatrickRadke}. The family of these geometric random graphs have important applications to the analysis of interpoint linkages in empirical networks. Note that $G_{S_{\beta_1}}(\eta_t)$ is almost surely a subgraph of $G_{S_{\beta_2}}(\eta_t)$, provided that {$\beta_1\geq\beta_2\ge 1$.} We also notice that $G_{S_1}(\eta_t)$ reduces to the Gabriel graph, $G_{S_2}(\eta_t)$ to the relative neighbourhood graph, and that for general $\beta\in[1,\infty)$ relation \eqref{eq:gamma} is satisfied with
\[
\gamma ={\rm vol}\Big(B^d\Big({\beta\over 2}e_1,{\beta\over 2}\Big)\cap B^d\Big(\Big(1-{\beta\over 2}\Big)e_1,{\beta\over 2}\Big)\Big)
\]
and $e_1$ as in the previous two examples.
\end{example}

\begin{example}[Mastercard graph]\label{ex:Mastercard}\rm 
To construct the so-called \textit{Mastercard graph} from \cite{CardintalColletteLangerman} we take for $S(x,y)$ the convex hull ${\rm conv}\{B^d(x,\|x-y\|)\cup B^d(y,\|x-y\|)\}$. Equivalently, $S(x,y)$ is the Minkowski sum of the segment $[xy]$ with the ball $B^d(0,\|x-y\|)$ of radius $\|x-y\|$. By Steiner's formula from convex geometry see, for example, \cite[Equation (14.5)]{SW} we have $\vol(S(x,y)) = (\kappa_d + \kappa_{d-1})\|x-y\|^d$, implying that \eqref{eq:gamma} is satisfied with $\gamma=\kappa_d + \kappa_{d-1}$.
\end{example}

\begin{example}[Pacman graph]\label{ex:Pacman}\rm 
Fix $\theta\in(0,2\pi]$ and write $c_\theta:=\cos(\theta/2)$. Define the wedge with {apex $x\in\RR^d$, axis $u\in\SS^{d-1}$, angle $\theta$, and radius $r>0$} by
\[
W(x,u,\theta;r):=\{x\}\cup\Big\{z\in\RR^d:0<\|z-x\|\le r,\frac{\langle z-x,\,u\rangle}{\|z-x\|}\ge c_\theta\Big\}.
\]
{For two distinct points $x,y\in\RR^d$ let $u_{xy}:=\frac{y-x}{\|y-x\|}$.} Following \cite{CardintalColletteLangerman}, the Pacman region associated with $x,y$ is
\[
P_\theta(x,y):={\rm conv}\Big\{W(x,u_{xy},\theta;{\|x-y\|})\cup W(y,-u_{xy},\theta;{\|x-y\|})\Big\}
\]
and the \textit{Pacman graph} with parameter $\theta$ arises by taking $S(x,y):=P_\theta(x,y)$.  Then $\vol(S(x,y))=\gamma(d,\theta)\,\|x-y\|^d$ for some constant $\gamma(d,\theta)$ independent of $x,y$, verifying \eqref{eq:gamma}. For $d=2$, if $\theta={2\pi/3}$ we get back the relative neighbourhood graph and for $\theta=2\pi$ we obtain the Mastercard graph considered above. For $d=2$, the Pacman graph with parameter $\theta=4\pi/k$, $k\in\NN$ with $k\geq 2$, has the particular feature that it contains no $k$-star, see \cite[Theorem 16]{CardintalColletteLangerman}. Moreover, the planar Pacman graph with parameter $\theta=\pi$ is free of $4$-cycles, whereas for $\theta=6\pi/5$ the planar Pacman graph is free of $5$-cycles, see \cite[Theorems 18 and 19]{CardintalColletteLangerman}.
\end{example}

\begin{figure}[t]
    \centering
    \includegraphics[width=0.3\linewidth]{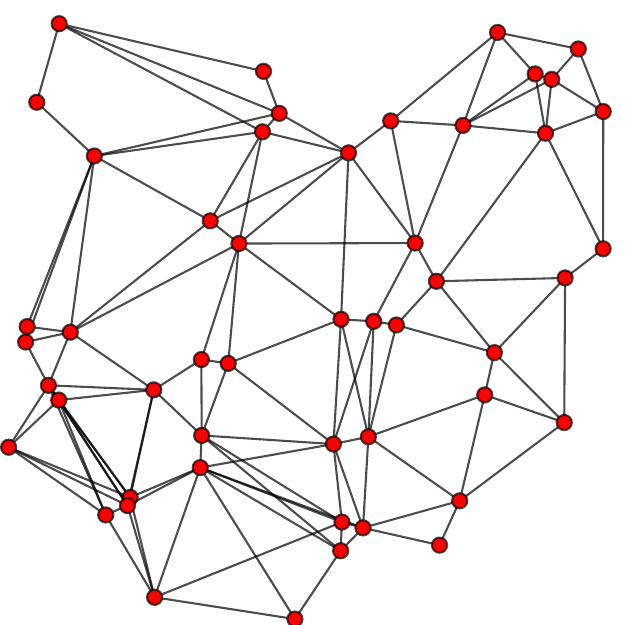}\qquad
    \includegraphics[width=0.3\linewidth]{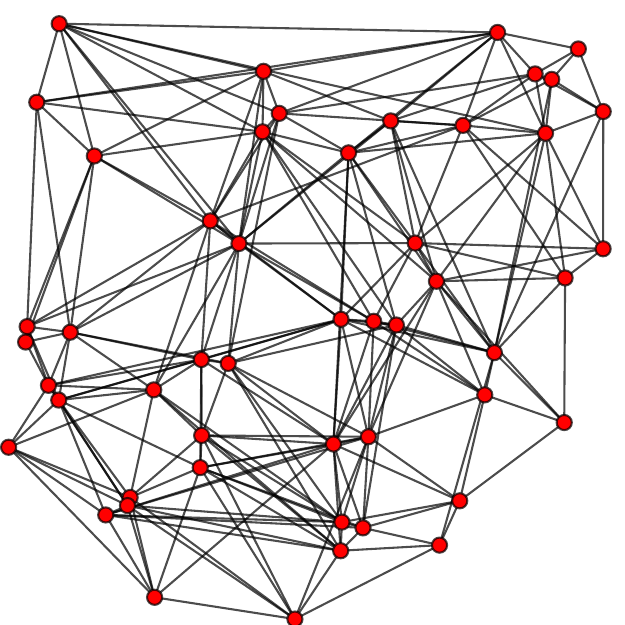}\qquad
    \includegraphics[width=0.3\linewidth]{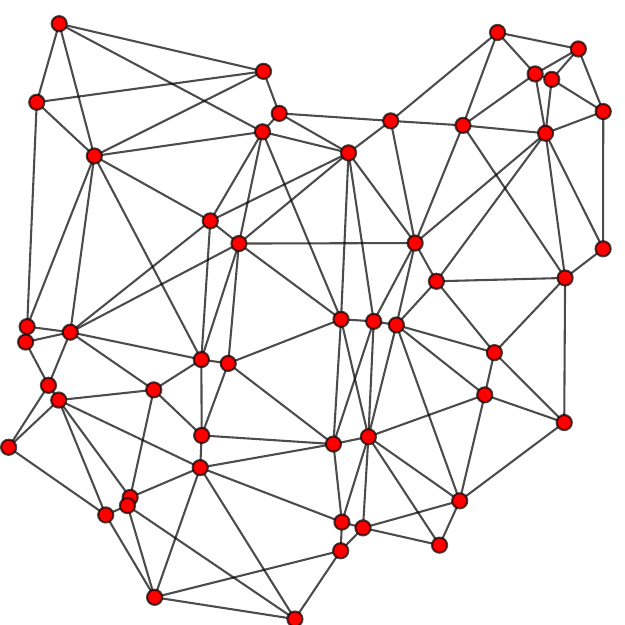}
    \caption{The generalised Gabriel graph with $K=[-1/2,1/2]^2$ (left), $K=\conv\{(-1/2,0),(1/2,0),(0,1/2)\}$ (middle), and $K$ an axis-aligned ellipse with semi axes $1/2$ and $3/10$ (right) generated by the same set of points.}
    \label{fig:Simulations3}
\end{figure}

\begin{example}[Truncated slab graph]\label{ex:TruncatedSlab}\rm 
For two distinct points $x,y\in\RR^d$ define again $u_{xy}:={y-x\over\|y-x\|}$, and let $H_{xy}:=\{z\in\RR^d: |\langle z-{x+y\over 2},{u_{xy}}\rangle|\leq {\|x-y\|\over 2}\}$ and
$$
S(x,y) := H_{xy} \cap B^d(x,2\|x-y\|) \cap B^d(y,2\|x-y\|).
$$
In \cite{CardintalColletteLangerman}, this set is called truncated slab and verbally described as ``two lunes on top of each other separated by a distance $2\,\|x-y\|$'' for $d=2$. Condition \eqref{eq:gamma} is satisfied again with some constant $\gamma$ only depending on $d$. More precisely, $\gamma=\vol(H_0\cap B^d(-{1\over 2}e_1,2))\cap B^d({1\over 2}e_1,2))$, where $e_1=(1,0,\ldots,0)$ and $H_0:=\{z\in\RR^d:|\langle z,e_1\rangle|\leq 1/2\}$. The resulting empty region graph is the so-called \textit{truncated slab graph} from \cite{CardintalColletteLangerman}. We remark that according to \cite[Lemma 20]{CardintalColletteLangerman} it is free of $5$-cycles for $d=2$.
\end{example}

In the Gabriel graph from Example \ref{ex:Gabriel}, the ball around the midpoint between the two vertices can be replaced by a rescaled copy of another fixed set $K$. For example, $K{:=}\{(z_1,z_2)\in\RR^2: |z_1|+|z_2|\leq 1\}$ was considered in \cite{Lee85}. We focus on the following generalised version of the Gabriel graph.

\begin{example}[Generalised Gabriel graph]\label{ex:ConvBody}\rm 
Let $S(x,y)$ be of the form
$$
S(x,y) = { \frac{x+y}{2} + \lVert x-y \rVert K,}
$$
where $K\subset\RR^d$ is a compact set that has non-empty interior and is star-shaped (i.e., $[0z]\subset K$ for all $z\in K$). The resulting empty region graph $G_S(\eta_t)$ is called {\em generalised Gabriel graph}. Here, \eqref{eq:gamma} holds with $\gamma = \vol(K)$.
\end{example}

\begin{figure}[t]\label{fig:Generalised_Gabriel}
\centering
\begin{minipage}{0.20\textwidth}
\centering
\begin{tikzpicture}[scale=1]
\coordinate (x) at (0,0);
\coordinate (y) at (2,0);
\draw[fill=red!10, draw=red] (1,0) circle ({0.5*veclen(2-0,0-0)});
\draw[thick] (x) -- (y);
\fill (x) circle (0.05) node[left] {$x$};
\fill (y) circle (0.05) node[right] {$y$};
\end{tikzpicture}
\end{minipage}%
\hfill
\begin{minipage}{0.39\textwidth}
\centering
\begin{tikzpicture}[scale=1]
\coordinate (x) at (0,0);
\coordinate (y) at (2,0);
\draw[fill=red!10, draw=red] (x) circle (2);
\draw[fill=red!10, draw=red] (y) circle (2);
\draw[red] (x) circle (2);
\draw[red] (y) circle (2);
\draw[thick] (x) -- (y);
\fill (x) circle (0.05) node[left] {$x$};
\fill (y) circle (0.05) node[right] {$y$};
\end{tikzpicture}
\end{minipage}%
\hfill
\begin{minipage}{0.39\textwidth}
\centering
\begin{tikzpicture}[scale=1]
\coordinate (x) at (0,0);
\coordinate (y) at (2,0);
\begin{scope}
  \clip (x) circle (2);
  \fill[red!10] (y) circle (2);
\end{scope}
\draw[red, thick] (x) circle (2);
\draw[red, thick] (y) circle (2);
\draw[thick] (x) -- (y);
\fill (x) circle (0.05) node[left] {$x$};
\fill (y) circle (0.05) node[right] {$y$};
\end{tikzpicture}
\end{minipage}
\caption{Two points $x,y$ and the regions $S(x,y)$ for the Gabriel graph (left), the strong nearest neighbour graph (middle), and the relative neighbourhood graph (right).}
\label{fig:Ex123}
\end{figure}
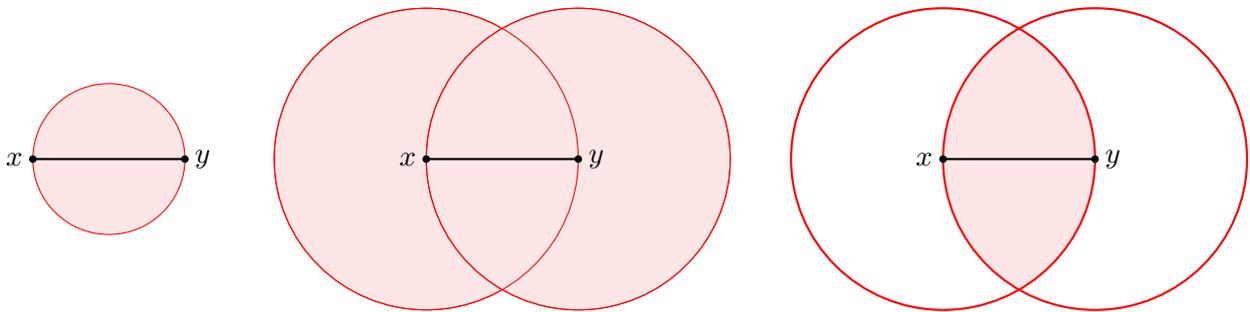

\begin{figure}[t]
\centering
\begin{minipage}{0.59\textwidth}
\centering
\begin{tikzpicture}[scale=0.8]
\coordinate (x) at (0,0);
\coordinate (y) at (2,0);

\coordinate (c1) at (-1,0);
\coordinate (c2) at (3,0);

\begin{scope}
  \clip (c1) circle (3);
  \fill[red!10] (c2) circle (3);
\end{scope}

\draw[red, thick] (c1) circle (3);
\draw[red, thick] (c2) circle (3);

\draw[thick] (x) -- (y);
\fill (x) circle (0.05) node[left] {$x$};
\fill (y) circle (0.05) node[right] {$y$};

\fill (c1) circle (0.03);
\fill (c2) circle (0.03);
\end{tikzpicture}
\end{minipage}%
\hfill
\begin{minipage}{0.39\textwidth}
\centering
\begin{tikzpicture}[scale=1]
\coordinate (x) at (0,0);
\coordinate (y) at (2,0);
\def\r{2}

\fill[red!10]             (0,\r) arc[start angle=90, end angle=270, radius=\r]
                        -- (2,-\r)
                           arc[start angle=270, end angle=90, radius=\r]
                        -- (2,\r) -- cycle;

\draw[fill=red!10,draw=red!10] (2,0) circle ({veclen(2-0,0-0)});
\draw[red, thick] (0,0) circle (2);
\draw[red, thick] (2,0) circle (2);

\draw[thick,red] (0,-2) -- (2,-2);
\draw[thick,red] (0,2) -- (2,2);

\draw[thick] (x) -- (y);
\fill (x) circle (0.05) node[left] {$x$};
\fill (y) circle (0.05) node[right] {$y$};
\end{tikzpicture}
\end{minipage}%
\caption{Two points $x,y$ and the regions $S(x,y)$ for the beta-skeleton graph with $\beta=3$ (left) and the Mastercard graph (right).}
\label{fig:Ex45}
\end{figure}
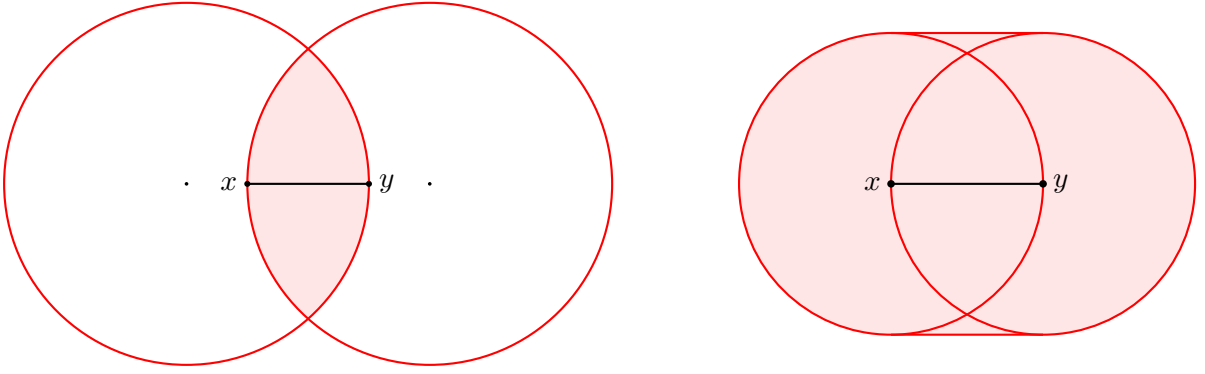

\subsection{Main results}

All results in this section are stated for the empty region graphs introduced in Examples \ref{ex:Gabriel}--\ref{ex:ConvBody} of the previous section. Nevertheless, our approach applies to a broader class of empty region graphs. Since the formulation of such general results requires several technical assumptions, we present the general versions of our results in Theorems \ref{thm:PoisConvAbstract} and \ref{thm:GumbelAbstract} below, while here we restrict attention to Examples \ref{ex:Gabriel}--\ref{ex:ConvBody}.

To formulate our first main result, we need some more concepts and notation. To introduce them, let $\mathbb{X}$ be a locally finite second countable Hausdorff space with Borel $\sigma$-field $\mathcal{B}(\mathbb{X})$ and let $\mathbf{N}_{\mathbb{X}}$ denote the space of locally finite counting measures on $\mathbb{X}$. In our paper, $\XX$ will be the product of $\RR^d$ with $\RR$ and the space $\LL^d$ of $1$-dimensional linear subspaces of $\RR^d$. A random element in $\mathbf{N}_{\mathbb{X}}$ is called a point process. In particular, by a Poisson process $\Pi$ on $\mathbb{X}$ with a locally finite intensity measure $\mathbf{M}$ we understand a point process with the two properties that $\Pi(A)$ follows a Poisson distribution with parameter $\mathbf{M}(A)$ for all $A\in\mathcal{B}(\mathbb{X})$ and that $\eta(A_1),\hdots,\eta(A_m)$ are independent for all pairwise disjoint $A_1,\hdots,A_m\in\mathcal{B}(\mathbb{X})$ with $m\in\mathbb{N}$, see e.g.\ \cite{LP,SW}.

To measure the distance between two point processes $\xi$ and $\zeta$ on $\mathbb{X}$ such that $\mathbb{E}[\xi(\mathbb{X})],\mathbb{E}[\zeta(\mathbb{X})]<\infty$,
we use the \emph{Kantorovich--Rubinstein distance}. It is defined as
\[
\mathrm{d}_{\mathrm{KR}}(\xi,\zeta) :=
\sup\Bigl\{\,\bigl|\mathbb{E}[h(\xi)] - \mathbb{E}[h(\zeta)]\bigr| 
:\; h:\mathbf{N}_{\mathbb{X}} \to \RR,\; h \text{ is 1-Lipschitz w.r.t.\ } \mathrm{d}_{\rm TV}\Bigr\},
\]
where $\mathrm{d}_{\rm TV}$ denotes the {\em total variation distance} which for any two measures $\varphi_1,\varphi_2$ on $\mathbb{X}$ is given by
\begin{equation}\label{eqn:dTV_measures}
\mathrm{d}_{\rm TV}(\varphi_1,\varphi_2) := \sup_{B\in\mathcal{B}(\mathbb{X}),\varphi_1(B),\varphi_2(B)<\infty}
\bigl|\varphi_1(B) - \varphi_2(B)\bigr|.
\end{equation}
The Kantorovich--Rubinstein distance is also called Wasserstein distance, Monge--Kantorovich distance or Rubinstein distance, see the discussion in Section 2 of \cite{DST}. In particular, convergence in Kantorovich--Rubinstein distance implies distributional convergence of point process, a notion for which we refer to \cite[Chapter 23]{Kallenberg}.

In what follows, we write {$\mathrm{span}\,B$} for the linear hull of a set $B\subset\RR^d$. By $\nu$ we denote the rotation-invariant Haar probability measure on $\LL^d$. Recall that $\eta_t$ stands for a Poisson process on $\RR^d$ whose intensity measure measure is $t\geq 2$ times the Lebesgue measure. We write $\eta_{t,\neq}^2$ for the set of pairs of distinct points of $\eta_t$ and $\delta_z$ for the Dirac measure concentrated at a point $z$. In each of the Examples \ref{ex:Gabriel}--\ref{ex:ConvBody}, one has an edge between $x$ and $y$ with $(x,y)\in\eta^2_{t,\neq}$ if and only if the set $S(x,y)$ does not contain any other points of $\eta_t$. The latter can be formulated as $(\eta_t-\delta_x-\delta_y)(S(x,y))=0$ since $\eta_t-\delta_x-\delta_y$ is the Poisson process $\eta_t$ without the points $x$ and $y$. Recall that $\gamma$ is the constant from \eqref{eq:gamma}. We consider the point process
\[
\xi_t := \tfrac{1}{2} \sum_{(x,y) \in \eta^2_{t,\ne}} 
\mathbbm{1}\Big\{ (\eta_t - \delta_x - \delta_y)\big(S(x,y)\big) = 0 \Big\} 
\, \delta_{\big(\tfrac{x+y}{2},\, \gamma t \lVert x - y\rVert^d - \log(t) - \log(\kappa_d/(2\gamma)), \mathrm{span} \{x-y\}\big)},
\]
which records the midpoints, the transformed lengths, and the directions of edges in the empty region graph. It is divided by two since each pair of points occurs twice in the sum. The following theorem shows that the distribution of $\xi_t$ approaches that of a Poisson process on $\RR^d \times \RR \times \LL^d$ with intensity measure $\vol \otimes \mu \otimes \nu$ as $t\to\infty$, where, here and throughout this paper, $\mu$ denotes the measure on $\RR$ defined by $\mu([a,\infty)) = e^{-a}$ for $a\in\RR$. In particular, we obtain explicit dimension-dependent convergence rates in the Kantorovich--Rubinstein distance {after restricting to an observation window.} In what follows, $\varphi|_A$ stands for the restriction of a point process $\varphi$ to a set $A$ (i.e., the point process containing only the points of $\varphi$ that belong to $A$) and $\overset{d}{\longrightarrow}$ indicates convergence in distribution. 

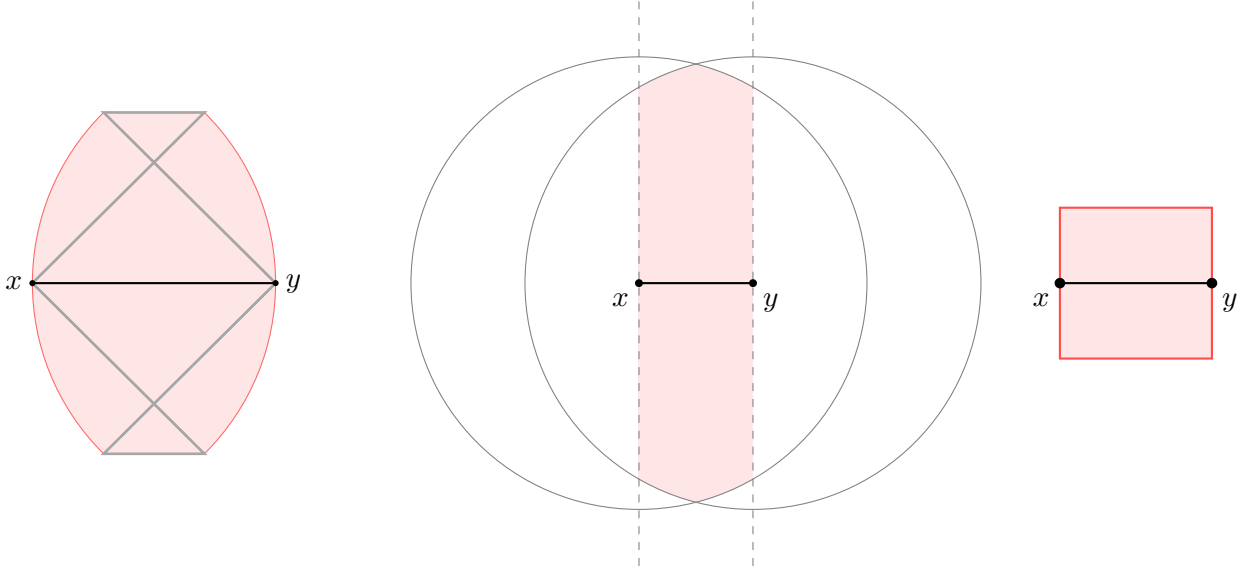
\begin{figure}[t]
\centering
\begin{minipage}{0.29\textwidth}
\centering
\begin{tikzpicture}[scale=0.8]
  \usetikzlibrary{calc,angles,quotes}

  \coordinate (x) at (0,0);
  \coordinate (y) at (4,0);
  \def\r{4}

  \coordinate (A) at ($(x)+(\r*0.70710678,\r*0.70710678)$); 
  \coordinate (B) at ($(x)+(\r*0.70710678,-\r*0.70710678)$); 
  \coordinate (C) at ($(y)+(-\r*0.70710678,\r*0.70710678)$); 
  \coordinate (D) at ($(y)+(-\r*0.70710678,-\r*0.70710678)$); 

  \fill[red!10, draw=red!60]
    (x) -- ($(x)+(45:\r)$) arc[start angle=45, end angle=-45, radius=\r] -- cycle;
  \fill[red!10, draw=red!60]
    (y) -- ($(y)+(135:\r)$) arc[start angle=135, end angle=225, radius=\r] -- cycle;

  \filldraw[fill=red!10, draw=gray!70, line width=1pt]
    (A) -- (C) -- (y) -- (D) -- (B) -- (x) -- cycle;
    

  \draw[gray!70, line width=0.8pt] (x) -- ($(x)+(45:\r*0.9)$);
  \draw[gray!70, line width=0.8pt] (x) -- ($(x)+(-45:\r*0.9)$);

  \draw[gray!70, line width=0.8pt] (y) -- ($(y)+(135:\r*0.9)$);
  \draw[gray!70, line width=0.8pt] (y) -- ($(y)+(225:\r*0.9)$);

    
\draw[thick] (x) -- (y);
\fill (x) circle (0.05) node[left] {$x$};
\fill (y) circle (0.05) node[right] {$y$};

\end{tikzpicture}
\end{minipage}%
\hfill
\begin{minipage}{0.49\textwidth}
\centering
\begin{tikzpicture}[scale=0.75]
  \coordinate (x) at (0,0);
  \coordinate (y) at (2,0);
  \def\r{4} 

  \begin{scope}
    \clip (x) circle (\r);
    \clip (y) circle (\r);
    \clip (-0.0,-5) rectangle (2.0,5); 
    \fill[red!10] (-5,-5) rectangle (7,5);
  \end{scope}

  \draw[gray] (x) circle (\r);
  \draw[gray] (y) circle (\r);
  \draw[gray, dashed] (0, -5) -- (0,5);
  \draw[gray, dashed] (2, -5) -- (2,5);

  \draw[thick] (x) -- (y);

  \fill (x) circle (0.07) node[below left] {$x$};
  \fill (y) circle (0.07) node[below right] {$y$};

\end{tikzpicture}
\end{minipage}%
\begin{minipage}{0.19\textwidth}
\centering
\begin{tikzpicture}[scale=1]
  \coordinate (x) at (0,0);
  \coordinate (y) at (2,0);
  \coordinate (m) at (1,0); 
  \def\r{2} 

  \draw[fill=red!10, draw=red!70, thick, rotate around={0:(m)}]
    (m) ++(-1,-1) rectangle ++(2,2);

  \draw[thick] (x) -- (y);

  \fill (x) circle (0.07) node[below left] {$x$};
  \fill (y) circle (0.07) node[below right] {$y$};
\end{tikzpicture}
\end{minipage}%
\caption{Two points $x,y$ and the regions $S(x,y)$ for the Pacman graph with $\beta=\pi/2$ (left), the truncated slab graph (middle), and the generalised Gabriel graph with $K=[-1/2,1/2]^2$ (right).}
\label{fig:Ex67}
\end{figure}

\begin{theorem}\label{th:PoisConv}
For any of the empty region graphs from Examples \ref{ex:Gabriel}--\ref{ex:ConvBody}, we have $\xi_t \overset{d}{\longrightarrow} \zeta$ as $t \to \infty$, where $\zeta$ is a Poisson process with intensity measure $\vol \otimes \mu \otimes\nu$. More precisely, for any set $W \in \mathcal{B}(\RR^d)$ with $\vol(W)<\infty$ and any $b \in \RR$, it holds that
    \[
    \mathrm{d}_{\mathrm{KR}} (\xi_t|_{W \times [b, \infty)\times\LL^d}, \zeta|_{W \times [b, \infty)\times\LL^d}) \le \begin{cases}
        c\log^{-(d-2)}(t), & d \ge 3,\\
        c\log^{-1/2}(t), & d=2,
    \end{cases}
    \]
for any $t \ge 2$. Here, $c\in(0,\infty)$ is a constant which only depends on $d$, $\vol(W)$, $b$, and the parameters of the particular example.
\end{theorem}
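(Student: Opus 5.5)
The plan is to apply the general Poisson process approximation bound of \cite{BSY} for point processes built from $U$-statistic-type sums over a Poisson input. The restricted process $\xi_t|_{W\times[b,\infty)\times\LL^d}$ has the form $\frac12\sum_{(x,y)\in\eta^2_{t,\neq}} g(x,y,\eta_t)\,\delta_{f(x,y)}$. Here $g(x,y,\eta_t)$ is the emptiness indicator of $S(x,y)$, and $f(x,y)$ is the midpoint/transformed-length/direction map, restricted to midpoints in $W$ and to lengths with $\gamma t\|x-y\|^d\ge \log t+\log(\kappa_d/(2\gamma))+b$. The bound of \cite{BSY} controls $\mathrm{d}_{\mathrm{KR}}$ by two quantities. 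The first is the total variation distance between the intensity measure $\mathbf{L}_t$ of $\xi_t|_{\dots}$ and $\vol\otimes\mu\otimes\nu$ restricted to $W\times[b,\infty)\times\LL^d$. The second is a sum of integrals measuring the dependence between pairs of potential edges that share a point or whose regions $S$ interact.

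\emph{Step 1 (intensity).} By the Mecke formula, the intensity is
\[
\mathbf{L}_t(\cdot)=\frac{t^2}{2}\int\!\!\int \mathbbm{1}\{f(x,y)\in\cdot\}\,e^{-t\gamma\|x-y\|^d}\,\dint x\,\dint y .
\]
Substituting $m=(x+y)/2$, $y-x=r u$ with $u\in\SS^{d-1}$, and then $s=\gamma t r^d-\log t-\log(\kappa_d/(2\gamma))$ should give exactly $\vol\otimes\mu\otimes\nu$. The Jacobian is $t^2 r^{d-1}\dint r\,\dint u/2$, and $d\kappa_d$ is the surface measure, so the normalisation $\kappa_d/(2\gamma)$ appears; each line corresponds to two directions $\pm u$, which cancels the factor $\tfrac12$. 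There is a subtlety: for $d\ge1$ the points $x,y$ themselves may lie in $S(x,y)$. Since Mecke removes them, however, the computation is exact, so the first term should vanish or at least be negligible.

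\emph{Step 2 (dependence terms).} Here I would bound terms of the form
\[
t^3\int\!\!\int\!\!\int \mathbb{P}\big(\eta_t(S(x,y)\cup S(x,z))=0\big)\,\dint x\,\dint y\,\dint z
\]
over pairs of long edges sharing $x$. I would also bound the analogous four-point terms in which $S(x,y)\cap S(z,w)\neq\emptyset$ or one region contains a vertex of the other pair. The key geometric input is a lower bound on $\vol(S(x,y)\cup S(x,z))$: it should exceed $\gamma\max(\|x-y\|^d,\|x-z\|^d)$ by an amount growing with the angle between $y-x$ and $z-x$ and with the length difference. I would try to get this from a lower bound on $\vol(S\setminus(S+v))$ for small translations or rotations, of order $\|v\|\,r^{d-1}$, via the structure of each example (star-shapedness for Example~\ref{ex:ConvBody}, explicit ball/wedge geometry otherwise). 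With lengths $r^d\approx\log t/(\gamma t)$, the extra exponential factor $e^{-ct\,\theta r^d}$ integrated against $\theta^{d-2}\dint\theta$ gives a gain of $(t r^d)^{-(d-1)}\approx \log^{-(d-1)}t$. Combined with the remaining polylog factors, this should yield $\log^{-(d-2)}t$ for $d\ge3$. For $d=2$ the angular integral is borderline, and splitting at angle $\log^{-1/2}t$ should give $\log^{-1/2}t$. The four-point terms, where regions merely intersect without sharing a vertex, should be of smaller order, of polynomial rather than logarithmic decay in $t$.

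\emph{Step 3.} Global convergence $\xi_t\overset{d}{\to}\zeta$ follows from the windowed bounds, since convergence on all sets $W\times[b,\infty)\times\LL^d$ determines vague distributional convergence \cite{Kallenberg}.

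The main obstacle is Step 2, specifically the uniform lower bounds on union volumes $\vol(S(x,y)\cup S(x,z))$ under rotations about a shared endpoint for each example. This is especially delicate when $\|x-y\|$ and $\|x-z\|$ differ and when $x\in\partial S$, for instance for the Pacman and truncated slab regions. I would first establish an abstract condition, a linear-in-angle gain of volume, and then verify it for each example separately.
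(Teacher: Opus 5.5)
Your framework (Theorem~\ref{thm:BSY} with $k=2$, exact intensity via the Mecke formula, then bounds on the dependence terms) is the paper's. Step~2, however, has a genuine gap. You dismiss the term $E_3$ as polynomially small; this is the term in which two vertex-disjoint pairs $(x,y),(u,v)$ with $S(x,y)\cap S(u,v)\neq\varnothing$ are simultaneously edges. That claim is false. The integrand of $E_3$ is $e^{-t\vol(S(x,y)\cup S(u,v))}$, and the union can exceed a single region by an arbitrarily small amount. In the Gabriel graph take $x,y=m\mp\frac r2e_1$ and $u,v=m+\delta e_2\mp\frac r2e_1$. Neither ball contains the other pair's endpoints, and the union has volume at most $\gamma r^d+\kappa_{d-1}(r/2)^{d-1}\delta$. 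So for $\delta\lesssim(tr^{d-1})^{-1}$, two such edges are about as likely as one, and integrating over admissible offsets shows $E_3$ decays only like a negative power of $\log t$.

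In the paper it is precisely $E_3$ that produces the stated rates. One uses $\vol(S(x,y)\cup S(u,v))\ge\gamma\|u-v\|^d+\beta\|\frac{x+y}2-\frac{u+v}2\|\,\|x-y\|^{d-1}$ for $\|x-y\|\ge\|u-v\|$. Integrating out the midpoint offset over $\RR^d$ costs a factor $(t\beta r^{d-1})^{-d}$. The remaining integral over the longer length $r\ge s\ge b_t$ then gives $(tb_t^d)^{-(d-2)}\asymp\log^{-(d-2)}t$. For $d=2$ this $r$-integral diverges. One must average with the trivial bound $\vol(S(x,y)\cup S(u,v))\ge\gamma\|x-y\|^2$ to get the Gaussian factor $\int_s^\infty e^{-t\gamma(r^2-s^2)/2}\,\dint r\le\sqrt{\pi/(2t\gamma)}$, and this is where $\log^{-1/2}t$ comes from.

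Conversely, the shared-vertex term $E_4$, which you treat as the main one, is of smaller order $\log^{-(d-1)}t+t^{-1}$. This holds once the volume gain is expressed through the midpoint displacement $\|y-z\|/2$, which encodes angle and length difference at once and can be integrated freely over $\RR^d$. In particular $E_4$ is not borderline in $d=2$, so your split at angle $\log^{-1/2}t$ targets the wrong integral. With an angular gain alone, the bound on the shared-vertex term is of order one in $d=2$, and no split of the angular range changes that.

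Accordingly, the geometric input you need is not a union-volume estimate for rotations about a common endpoint. It is assumption (iii) of Theorem~\ref{thm:PoisConvAbstract} for arbitrary pairs with intersecting regions, uniformly in the relative rotation: $\vol(S(x,y)\setminus S(u,v))\ge\beta\|\frac{x+y}2-\frac{u+v}2\|\,\|x-y\|^{d-1}$ whenever $\|x-y\|\ge\|u-v\|$. The paper obtains it in two steps. First, star-shapedness of $K$ lets one enlarge $S(u,v)$ to scale $\|x-y\|$. Second, one applies $\vol(K\setminus(\varrho K+w))\ge c_K\|w\|$: with $\varrho$ the identity via Lemma~\ref{lem:geomConv} for Example~\ref{ex:ConvBody}, and for general rotations via Proposition~\ref{prop:geom}. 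This single estimate controls both $E_3$ and $E_4$, and boundary issues such as $x\in\partial S(x,y)$ never enter.
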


Note that the parameters of the particular examples that affect the constant in the previous theorem are $\beta$ for the {beta-skeleton} graph, $\theta$ for the Pacman graph, and $K$ for the generalised Gabriel graph.

Our second main result describes and quantifies the (to some extent) universal asymptotic distributional behaviour of the longest edge with midpoint in an observation window of each of the empty region graphs in Examples \ref{ex:Gabriel}--\ref{ex:ConvBody}. To this end, let
$$
L_{t,W} := \max\Big\{\|x-y\|:(x,y)\in\eta_{t,\neq}^2,(\eta_t-\delta_x-\delta_y)(S(x,y))=0,{x+y\over 2}\in W\Big\}
$$
be the length of the longest edge in such a graph with midpoint $\frac{x+y}{2}$ in a set $W \in \mathcal{B}(\RR^d)$ with $\vol(W)<\infty$. Here, we use the convention that $\max\varnothing=-\infty$. Furthermore, for two random variables $X$ and $Y$ we denote by
$$
\mathrm{d}_{\mathrm{Kol}}(X,Y) := \sup_{s\in\RR}|\PP(X\leq s) - \PP(Y\leq s)|
$$
the \textit{Kolmogorov distance} between $X$ and $Y$, and note that convergence in Kolmogorov distance implies convergence in distribution.

\begin{theorem}\label{th:gumb}
Assume the same set-up as in Theorem \ref{th:PoisConv}. Then, for any $t\geq 2$,
$$
\mathrm{d}_{\mathrm{Kol}}\big(\gamma tL_{t,W}^d-\log(t)-\log(\kappa_d/(2\gamma)),G_W\big) \leq \begin{cases}
        c \log^{-(d-2)}(t), & d \ge 3,\\
        c \log^{-1/2}(t), & d=2,
    \end{cases}
$$
where $G_W$ is a Gumbel distributed random variable with $\PP(G_W\leq s)=\exp(-\vol(W)e^{-s})$ for $s\in\RR$. Here, $c\in(0,\infty)$ is a constant which only depends on $d$, $\vol(W)$, and the parameters of the particular example.
\end{theorem}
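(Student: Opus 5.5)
The plan is to reduce Theorem \ref{th:gumb} to a Poisson approximation for a single count. For $s\in\RR$ put $r_t(s):=\big((s+\log t+\log(\kappa_d/(2\gamma)))/(\gamma t)\big)^{1/d}$ whenever the bracket is positive. Then
\[
\{\gamma tL_{t,W}^d-\log(t)-\log(\kappa_d/(2\gamma))\le s\}=\{N_t(s)=0\},
\]
where $N_t(s):=\xi_t(W\times(s,\infty)\times\LL^d)$ counts the edges with midpoint in $W$ and transformed length exceeding $s$. Moreover $\PP(G_W\le s)=\PP(\zeta(W\times(s,\infty)\times\LL^d)=0)$. It therefore suffices to bound $|\PP(N_t(s)=0)-e^{-\vol(W)e^{-s}}|$ uniformly in $s$.

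For $s\ge b_0$, with $b_0$ fixed, one could try to invoke Theorem \ref{th:PoisConv}. However, $h(\varphi)=\mathbbm 1\{\varphi(A)=0\}$ is $1$-Lipschitz with respect to $\mathrm{d}_{\rm TV}$, and the constant in that theorem depends on $b$, so this route does not give uniformity in $s$. Instead I would prove the Chen--Stein bound directly for the sum $N_t(s)=\frac12\sum_{(x,y)\in\eta^2_{t,\ne}} f_t(x,y,\eta_t)$. Combining the Mecke formula with the Stein equation for the Poisson distribution, as announced in the introduction, gives a bound of the form
\[
d_{\rm TV}(N_t(s),\mathrm{Po}(\EE N_t(s)))\le C\big(T_1+T_2+T_3\big).
\]
Here $T_1$ integrates pairs of edges sharing a vertex (the triple integral over $x,y,z$). $T_2$ integrates pairs of disjoint edges whose empty regions interact (four-point integral). $T_3$ accounts for the dependence of a single edge indicator on the configuration outside its region. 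Adding the elementary estimate $|e^{-\lambda_1}-e^{-\lambda_2}|\le|\lambda_1-\lambda_2|$ lets us compare with $\vol(W)e^{-s}$.

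The first step is the mean computation. By the bivariate Mecke formula and the substitution $m=(x+y)/2$, $x-y=\rho u$,
\[
\EE N_t(s)=\frac{t^2}{2}\vol(W)\, d\kappa_d\int_{r_t(s)}^\infty \rho^{d-1}e^{-\gamma t\rho^d}\,\dint\rho=\frac{t\kappa_d}{2\gamma}\vol(W)e^{-\gamma t r_t(s)^d}=\vol(W)e^{-s}.
\]
Since the points $x,y$ lie outside the relevant counting region, this is exact up to boundary conventions, so the mean contributes essentially no error.

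The second step is to handle large and small $s$ by monotonicity. For $s\ge s_t:=(d-2)\log\log t$ (respectively $\frac12\log\log t$ when $d=2$), both probabilities lie within $O(\vol(W)e^{-s})$ of $1$, which is at most the target rate. This uses $\PP(N_t\ge1)\le\EE N_t$. For $s\le -s_t$ both probabilities are at most $\exp(-\vol(W)\log^{d-2}t)$ or so. For the empirical side I would use monotonicity of $\PP(N_t(s)=0)$ in $s$ together with the bound at $s=-s_t$. For $s\le-\log t$ we have $r_t$ undefined, every edge counts, and $\PP(N_t=0)$ is tiny. It therefore remains to treat $|s|\le s_t$, where $\gamma t r_t^d\approx\log t$.

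The main obstacle is the third step, bounding $T_1$ and $T_2$. For two edges $(x,y),(x,z)$ of length at least $r_t$, the joint emptiness probability is $\exp(-t\,\vol(S(x,y)\cup S(x,z)))$. I need a lower bound of the form
\[
\vol(S(x,y)\cup S(x,z))\ge \gamma\|x-y\|^d+c\,\|x-y\|^{d-1}\min\{\|y-z\|,\|x-y\|\},
\]
uniformly over the examples. For convex or star-shaped $S$ this follows from a translation/rotation estimate $\vol(A\setminus (A+v))\ge c\|v\|\,\vol_{d-1}(\text{projection})$, extended to small rigid motions. Integrating then gives a gain of $(t r_t^{d-1})^{-1}$ per integrated coordinate in the ``close'' configurations, against $t^d r_t^{d}$ volume factors. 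Concretely, $T_1\lesssim t^3\int e^{-\gamma t\rho^d}e^{-c t\rho^{d-1}\|y-z\|}\cdots$, and after scaling this yields $\vol(W)e^{-s}\cdot (\log t)^{-(d-1)}\cdot(\log t)\cdots$, which is $\lesssim(\log t)^{-(d-2)}$. For $d=2$ the near-collinear configurations are handled with an angular estimate, which loses a square root and gives $\log^{-1/2}t$. The far configurations, where the regions are disjoint, contribute $O(\EE N_t^2/(t\vol\ \text{scale}))$, which is negligible. The term $T_3$ is zero if the Stein coupling is built with the regions themselves; otherwise it is bounded like $T_2$. Verifying the volume lower bound uniformly in $x,y,z$ for each example (cusps in the Pacman and star-shaped $K$ cases) is where I expect most of the work. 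I would first prove it for $S=\frac{x+y}2+\|x-y\|K$ with $K$ star-shaped, and then reduce the other examples to it via inclusion of a suitable ball-like region near the edge.
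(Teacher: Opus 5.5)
Your outline is the same as the paper's: reduce to $\PP(\xi_t(W\times(s,\infty)\times\LL^d)=0)$, compute the mean exactly as $\vol(W)e^{-s}$, bound a Mecke--Chen--Stein error built from shared-vertex and four-point interaction integrals, and use monotonicity in the tails. But two steps fail as written.

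\textbf{First gap: the Stein bound needs the magic factor.} Your bound $\mathrm{d}_{\mathrm{TV}}\le C(T_1+T_2+T_3)$ uses an absolute constant, and you then drop the factor $e^{-s}$ that your own computation of $T_1$ produces. The interaction integrals are of size $\vol(W)e^{-s}$ times the gain; the paper has $E_3\le C_3e^{-s}(s+\log t+\tau)^{-(d-2)}$ with $\tau=\log(\kappa_d/(2\gamma))$. At your cutoff $s=-s_t$ one has $e^{-s}=\log^{d-2}(t)$ (resp.\ $\log^{1/2}(t)$ for $d=2$). So your bound there is of order $\vol(W)$ and does not decay in $t$. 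The same $O(1)$ error then spoils the monotonicity argument for $s\le -s_t$. This is exactly the non-uniformity in $s$ for which you set Theorem~\ref{th:PoisConv} aside. What you need is the factor $\min\{1,1/\lambda\}$, $\lambda=\vol(W)e^{-s}$, in front of the interaction terms; this is the paper's Theorem~\ref{thm:Poisson-Approximation}. It cancels $e^{-s}$ and gives a bound uniform over $s\ge-\frac14\log t-\tau$, below which monotonicity suffices. With an absolute constant you would lose a factor $\log\log t$.

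\textbf{Second gap: the geometric input that produces the rate is missing.} Your $T_1$ (the paper's $E_4$) is actually of smaller order $e^{-s}\log^{-(d-1)}(t)$. The rate $\log^{-(d-2)}(t)$ comes from the four-point term $T_2$ (the paper's $E_3$), which carries one more Poisson point and is larger by a factor of order $\log t$.

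Bounding $T_2$ requires assumption (iii) of Theorem~\ref{thm:PoisConvAbstract}: $\vol(S(x,y)\setminus S(u,v))\ge\beta\|\frac{x+y}{2}-\frac{u+v}{2}\|\,\|x-y\|^{d-1}$ whenever $\|x-y\|\ge\|u-v\|$ and the regions meet. You state only a shared-vertex bound in terms of $\|y-z\|$. Without a common vertex, the midpoints can be close while $x-y$ and $u-v$ point in arbitrary directions. So for Examples~\ref{ex:Gabriel}--\ref{ex:TruncatedSlab} you need $\vol(K\setminus(\varrho K+x))\ge c_K\|x\|$ uniformly over \emph{all} rotations $\varrho$, and a small-rigid-motion estimate does not give this.

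Your proposed reduction to the translation class via an inscribed ball-like region near the edge also fails. An inclusion $A\subseteq S(x,y)$ only yields $\vol(S(x,y)\setminus S(u,v))\ge\vol(A\setminus S(u,v))$, and this can vanish. For example, in the strong nearest neighbour graph with $\|x-z\|=\|x-y\|$, the Gabriel ball of $[xy]$ lies in $B^d(x,\|x-y\|)\subseteq S(x,z)$. Yet the true excess, of order $\|y-z\|\,\|x-y\|^{d-1}$, sits around $y$.

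The paper supplies the missing bound in Proposition~\ref{prop:geom}. Since $K$ is a body of revolution symmetric under $x_1\mapsto-x_1$, $K$ and $\varrho K$ have equal section volumes orthogonal to each vector of an orthonormal basis built from $e_1\pm\varrho e_1$. Cavalieri's principle on a half-space through the centre of an inscribed ball then gives the linear bound for small $\|x\|$, uniformly in $\varrho$. Continuity and compactness handle the remaining $x$.

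Finally, the $d=2$ loss is not an angular effect. It comes from the divergent integral $\int_s^\infty r^{-1}\,\dint r$ over the longer edge length in $E_3$. The paper controls it by keeping half of $\gamma\|x-y\|^2$ in the exponent, at a cost of order $\log^{-1/2}(t)$.
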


The remainder of this paper is structured as follows. We begin by restating a Poisson process approximation bound from \cite{BSY} and deriving a new related quantitative Poisson limit theorem in Section \ref{sec:PoissonApprox}, which are essentially the tools to establish Theorems \ref{th:PoisConv} and \ref{th:gumb}, respectively. In Section \ref{sec:AbstracResults}, abstract versions of Theorems \ref{th:PoisConv} and \ref{th:gumb}, formulated under general assumptions on the sets $(S(x,y))_{x,y\in\mathbb{R}^d}$, are presented and shown by using the bounds for distributional approximations from Section \ref{sec:PoissonApprox}. Finally, in Section \ref{sec:Proofs}, we prove Theorems \ref{th:PoisConv} and \ref{th:gumb} in their concrete form by verifying, for Examples \ref{ex:Gabriel}--\ref{ex:ConvBody}, the assumptions introduced in Section \ref{sec:AbstracResults}.

\section{Poisson process and Poisson approximation}\label{sec:PoissonApprox}

For Poisson process approximations as in Theorem \ref{th:PoisConv} we rely on a result from \cite{BSY}, which we recall in this section. In order to provide a rate of convergence to the Gumbel distribution as in Theorem \ref{th:gumb}, we employ a related new bound for Poisson approximation, which we show in this section and which we think is of independent interest.  

Let $\mathbb{X}$ and $\mathbb{Y}$ be locally finite second countable Hausdorf spaces and, as above, let $\mathbf{N}_{\mathbb{X}}$ be the space of locally finite counting measures on $\mathbb{X}$. For fixed $k\in\mathbb{N}$ let $f:\mathbb{X}^k\times\mathbf{N}_{\mathbb{X}}\to\mathbb{Y}$ and $g:\mathbb{X}^k\times\mathbf{N}_{\mathbb{X}}\to\{0,1\}$ be measurable functions that are symmetric in the arguments from $\mathbb{X}^k$. Further, let $\eta$ be a Poisson process on $\mathbb{X}$ with a locally finite intensity measure $\mathbf{K}$. Denoting by $\eta^k_{\neq}$ the collection of $k$-tuples of distinct points of $\eta$, we are interested in the point process
\begin{align}\label{eq:DefXi}
\xi := \frac{1}{k!} \sum_{\mathbf{x}\in\eta^k_{\neq}} g(\mathbf{x},\eta) \delta_{f(\mathbf{x},\eta)},    
\end{align}
whose intensity measure is denoted by $\mathbf{L}$. Since $\xi$ depends on the underlying Poisson process {$\eta$}, we occasionally write $\xi(\,\cdot\,;\eta)$ whenever we would like to emphasise the role of $\eta$. Let $\mathcal{F}$ be the set of closed subsets of $\mathbb{Y}$ equipped with the Borel $\sigma$-field generated by the Fell topology, see \cite[Chapter 2]{SW}. Assume that there exists a measurable {symmetric} map $S: \mathbb{X}^k\to\mathcal{F}$ such that, {for all $\omega\in\mathbf{N}_{\mathbb{X}}$ and $\mathbf{x}\in\omega^k_{\neq}$,}
\begin{equation}\label{eqn:assumption_xS}
\mathbf{x}\in S_{\mathbf{x}}^k,
\end{equation}
\begin{equation}\label{eqn:assumption_g}
g(\mathbf{x},\omega) = g(\mathbf{x},\omega|_{S_{\mathbf{x}}}),
\end{equation}
and
\begin{equation}\label{eqn:assumption_f}
f(\mathbf{x},\omega) = f(\mathbf{x},\omega|_{S_{\mathbf{x}}}) \quad \text{if} \quad g(\mathbf{x},\omega)=1,
\end{equation}
where $S_{\mathbf{x}}:=S(\mathbf{x})$ and $\omega|_{S_{\mathbf{x}}}$ denotes the restriction of $\omega$ to $S_{\mathbf{x}}$. Here and in what follows, we write $\mathbf{x}=(x_1,\ldots,x_k)$ for a $k$-tuple of points of $\XX$ and use the short hand notation 
$$
\delta_{\mathbf{x}} {:=} \sum_{i=1}^k \delta_{x_i}.
$$

The following result is a simplified version of \cite[Theorem 5.1]{BSY} adapted to our situation.

\begin{theorem}\label{thm:BSY}
Let $\xi$ be the point process defined by \eqref{eq:DefXi}, and assume that $\mathbf{L}(\mathbb{Y})<\infty$, {\eqref{eqn:assumption_xS},} \eqref{eqn:assumption_g}, and \eqref{eqn:assumption_f} are satisfied. Let $\zeta$ be a Poisson process on $\mathbb{Y}$ with a finite intensity measure $\mathbf{M}$, and define
\begin{align*}
E_2 & := \frac{2}{(k!)^2} \int_{\mathbb{X}^k} \int_{\mathbb{X}^k} \mathbbm{1}\{S_{\mathbf{x}}\cap S_{\mathbf{z}}\neq\varnothing\} \mathbb{E}[ g(\mathbf{x},\eta+\delta_{\mathbf{x}})] \mathbb{E}[g(\mathbf{z},\eta+\delta_{\mathbf{z}}) ] \, \mathbf{K}^k(\dint \mathbf{z}) \, \mathbf{K}^k(\dint \mathbf{x}) \\
E_3 & := \frac{2}{(k!)^2} \int_{\mathbb{X}^k} \int_{\mathbb{X}^k} \mathbbm{1}\{S_{\mathbf{x}}\cap S_{\mathbf{z}}\neq\varnothing\} \mathbb{E}[ g(\mathbf{x},\eta+\delta_{\mathbf{x}}+\delta_{\mathbf{z}}) g(\mathbf{z},\eta+\delta_{\mathbf{x}}+\delta_{\mathbf{z}}) ] \, \mathbf{K}^k(\dint \mathbf{z}) \, \mathbf{K}^k(\dint \mathbf{x}) \\
E_4 & := \frac{2}{k!} \sum_{\varnothing \subsetneq I \subsetneq \{1,\hdots,k\}} \frac{1}{(k-|I|)!} \int_{\mathbb{X}^k} \int_{\mathbb{X}^{k-|I|}} \mathbb{E}[ g(\mathbf{x},\eta+\delta_{\mathbf{x}}+\delta_{\mathbf{z}}) g((\mathbf{x}_I,\mathbf{z}),\eta+\delta_{\mathbf{x}}+\delta_{\mathbf{z}}) ] \, \mathbf{K}^{k-|I|}(\dint \mathbf{z}) \, \mathbf{K}^k(\dint \mathbf{x})
\end{align*}
with $(\mathbf{x}_I,\mathbf{z}):=(x_{i_1},\hdots,x_{i_m},z_1,\hdots,z_{k-m})$ for $I=\{i_1,\hdots,i_m\}$.
Then,
$$
\mathrm{d}_{\mathrm{KR}}(\xi,\zeta) \leq \mathrm{d}_{\mathrm{TV}}(\mathbf{L},\mathbf{M}) + E_2 + E_3 + E_4.
$$
\end{theorem}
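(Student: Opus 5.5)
The plan is to deduce Theorem~\ref{thm:BSY} from the general Poisson process approximation result \cite[Theorem 5.1]{BSY}. That result is itself proved by Stein's method for Poisson process approximation with the Glauber (immigration--death) generator. Its bound has the form $\mathrm{d}_{\mathrm{TV}}(\mathbf{L},\mathbf{M})$ plus an integral, against $\mathbf{L}$, of the expected total variation distance between $\xi$ and a coupling of its reduced Palm version. So the work consists of constructing that coupling explicitly from the Mecke formula and checking that the error term is dominated by $E_2+E_3+E_4$.

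First I would recall the Stein step. For $h$ that is $1$-Lipschitz with respect to $\mathrm{d}_{\rm TV}$, the solution $\psi_h$ of the Stein equation for the Poisson process with intensity $\mathbf{M}$ has first differences bounded by $1$, which is the Stein factor in the Kantorovich--Rubinstein setting. Hence
\[
|\EE h(\xi)-\EE h(\zeta)|\le \Bigl|\int \EE[\psi_h(\xi+\delta_y)-\psi_h(\xi)]\,(\mathbf{M}-\mathbf{L})(\dint y)\Bigr| + \Bigl|\int \EE[\psi_h(\xi+\delta_y)-\psi_h(\xi)]\,\mathbf{L}(\dint y)-\EE\!\int[\psi_h(\xi)-\psi_h(\xi-\delta_y)]\,\xi(\dint y)\Bigr|.
\]
The first term is bounded by $\mathrm{d}_{\rm TV}(\mathbf{L},\mathbf{M})$. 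For the second term, the multivariate Mecke formula rewrites $\EE\int\cdots\xi(\dint y)$ as
\[
\frac{1}{k!}\int_{\XX^k}\EE\bigl[g(\mathbf{x},\eta+\delta_{\mathbf{x}})\,\bigl(\psi_h(\xi(\eta+\delta_{\mathbf{x}}))-\psi_h(\xi(\eta+\delta_{\mathbf{x}})-\delta_{f(\mathbf{x},\eta+\delta_{\mathbf{x}})})\bigr)\bigr]\,\mathbf{K}^k(\dint\mathbf{x}).
\]
Similarly, $\mathbf{L}(\dint y)=\frac1{k!}\int\EE[g(\mathbf{x},\eta+\delta_{\mathbf{x}})\,\mathbbm{1}\{f(\mathbf{x},\eta+\delta_\mathbf{x})\in\dint y\}]\,\mathbf{K}^k(\dint\mathbf{x})$.

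The key step is to use the localisation assumptions \eqref{eqn:assumption_xS}--\eqref{eqn:assumption_f}. By \eqref{eqn:assumption_g} and \eqref{eqn:assumption_f}, the pair $(g(\mathbf{x},\cdot),f(\mathbf{x},\cdot))$ depends only on $\eta|_{S_\mathbf{x}}$. Write $\eta=\eta|_{S_\mathbf{x}}+\eta|_{S_\mathbf{x}^c}$, which are independent parts. I would compare $\xi(\eta+\delta_\mathbf{x})-\delta_{f(\mathbf{x},\cdot)}$ with the process $\xi$ built from $\eta|_{S_\mathbf{x}^c}$ together with an independent copy of $\eta$ on $S_\mathbf{x}$. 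Those points $\mathbf{z}$ whose indicator $g$ is unchanged cancel, and the difference of $\psi_h$ is bounded by the total variation of the remaining discrepancy, thanks to the Lipschitz property. A point $\mathbf{z}$ contributes only in one of three ways:
\begin{itemize}
\item[(a)] $\mathbf{z}$ consists of points of $\eta$ (not $\mathbf{x}$), $S_\mathbf{z}\cap S_\mathbf{x}\neq\varnothing$ and $g(\mathbf{z},\eta)=1$ in the independent copy. By Mecke and independence this yields the product-of-expectations term $E_2$.
\item[(b)] $S_\mathbf{z}\cap S_\mathbf{x}\ne\varnothing$ and $g(\mathbf{z},\eta+\delta_\mathbf{x}+\delta_\mathbf{z})=1$ jointly with $g(\mathbf{x},\cdot)=1$. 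This yields $E_3$.
\item[(c)] $\mathbf{z}$ shares a proper nonempty index set $I$ of coordinates with $\mathbf{x}$. Mecke applied to the remaining $k-|I|$ points yields $E_4$; the combinatorial prefactor $\frac{1}{(k-|I|)!}$ comes from the symmetrisation.
\end{itemize}
The factor $2$ arises because each discrepancy is counted once in each of the two terms being compared.

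I expect the main obstacle to be the bookkeeping in this coupling. One has to ensure that points $\mathbf{z}$ with $S_\mathbf{z}\cap S_\mathbf{x}=\varnothing$ genuinely cancel, which needs \eqref{eqn:assumption_xS} so that $\mathbf{z}\subset S_\mathbf{z}$ does not meet $\mathbf{x}$. One also has to verify the measurability of $\mathbf{x}\mapsto S_\mathbf{x}$ in the Fell topology, so that the restrictions are measurable. Since all of this is already carried out in \cite{BSY}, in the write-up I would mainly check that our setting satisfies the hypotheses of \cite[Theorem 5.1]{BSY}, namely finiteness of $\mathbf{L}(\mathbb{Y})$ and the stabilisation conditions. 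I would then identify their error terms with $E_2,E_3,E_4$ after discarding the refinements not needed here.
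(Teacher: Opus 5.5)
Your proposal is correct and matches the paper, which gives no independent proof either: it simply specialises \cite[Theorem 5.1]{BSY} to the present setting, with stabilisation regions satisfying \eqref{eqn:assumption_xS}--\eqref{eqn:assumption_f}, and your outline of the Stein--Glauber coupling and of the split of the discrepancy into $E_2$, $E_3$, $E_4$ is consistent with how that result is proved. The one imprecision is in that optional sketch: controlling the coupling discrepancy needs a second-order bound on the Stein solution (on $\Delta_y\psi_h(\omega)-\Delta_y\psi_h(\omega')$ in terms of $\omega-\omega'$), not just the Lipschitz property of $\psi_h$, and $|\Delta\psi_h|\le 1$ alone bounds the first term only by $|\mathbf{M}-\mathbf{L}|(\mathbb{Y})\le 2\,\mathrm{d}_{\mathrm{TV}}(\mathbf{L},\mathbf{M})$, so obtaining exactly $\mathrm{d}_{\mathrm{TV}}(\mathbf{L},\mathbf{M})$ additionally uses $|\psi_h(\omega+\delta_y)-\psi_h(\omega+\delta_{y'})|\le 1$.
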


Analogously to \eqref{eqn:dTV_measures}, we define for non-negative integer-valued random variables $X$ and $Y$ the {\em total variation distance}
$$
\mathrm{d}_{\mathrm{TV}}(X,Y) = \sup_{A\subseteq \mathbb{N}_0} |\mathbb{P}(X\in A)-\mathbb{P}(Y\in A)|
$$
with $\mathbb{N}_0:=\mathbb{N}\cup\{0\}$. We will also use the following Poisson approximation result, whose proof is a combination of the arguments from the proof of Theorem \ref{thm:BSY} in \cite{BSY} and the Chen--Stein method for Poisson approximation, for which we refer to \cite{BarbourHolstJanson,Ross}.

\begin{theorem}\label{thm:Poisson-Approximation}
Let $\xi$ be the point process defined by \eqref{eq:DefXi}, and assume that $\mathbf{L}(\mathbb{Y})<\infty$, {\eqref{eqn:assumption_xS},} \eqref{eqn:assumption_g}, and \eqref{eqn:assumption_f} are satisfied. Further, let $Y$ be a Poisson random variable with parameter $a\in(0,\infty)$. Then,
$$
\mathrm{d}_{\mathrm{TV}}(\xi(\mathbb{Y}),Y) \leq  \min\{1,1/\sqrt{a}\} |\mathbb{E}[\xi(\mathbb{Y})] -a| + \frac{\min\{1,1/a\}}{2} (E_2 + E_3 + E_4)
$$
with $E_2$, $E_3$, and $E_4$ as in Theorem \ref{thm:BSY}.
\end{theorem}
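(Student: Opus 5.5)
The plan is to run the Chen--Stein method on $W:=\xi(\mathbb{Y})=\frac{1}{k!}\sum_{\mathbf{x}\in\eta^k_{\neq}} g(\mathbf{x},\eta)$ and to reuse, for the resulting remainder, exactly the decoupling estimates behind Theorem \ref{thm:BSY}. We may assume $\mathbb{E}[W]=\mathbf{L}(\mathbb{Y})<\infty$.

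\textbf{Stein equation.} Fix $A\subseteq\mathbb{N}_0$ and let $h_A$ solve
\[
a h_A(n+1)-n h_A(n)=\mathbbm{1}\{n\in A\}-\mathbb{P}(Y\in A).
\]
The classical bounds (Barbour--Holst--Janson) give
\[
\|h_A\|_\infty\le \min\{1,1/\sqrt a\},\qquad \|\Delta h_A\|_\infty\le \tfrac{1-e^{-a}}{a}\le \min\{1,1/a\}.
\]
Then
\[
\mathbb{P}(W\in A)-\mathbb{P}(Y\in A)=\mathbb{E}[a h_A(W+1)-W h_A(W)].
\]
Apply the multivariate Mecke formula to $\mathbb{E}[W h_A(W)]$:
\[
\mathbb{E}[W h_A(W)]=\frac{1}{k!}\int \mathbb{E}\bigl[g(\mathbf{x},\eta+\delta_{\mathbf{x}})\,h_A(W(\eta+\delta_{\mathbf{x}}))\bigr]\,\mathbf{K}^k(\dint\mathbf{x}).
\]
Writing $a=\mathbb{E}[W]+(a-\mathbb{E}[W])$ splits the difference into two parts.

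\textbf{First term.} The part $(a-\mathbb{E}W)\mathbb{E}h_A(W+1)$ is bounded by $\min\{1,1/\sqrt a\}\,|\mathbb{E}W-a|$.

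\textbf{Remainder.} What is left is
\[
\frac{1}{k!}\int \Bigl(\mathbb{E}[g(\mathbf{x},\eta+\delta_{\mathbf{x}})]\,\mathbb{E}[h_A(W+1)]-\mathbb{E}\bigl[g(\mathbf{x},\eta+\delta_{\mathbf{x}})\,h_A(W(\eta+\delta_{\mathbf{x}}))\bigr]\Bigr)\,\mathbf{K}^k(\dint\mathbf{x}).
\]

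\textbf{Decoupling.} For fixed $\mathbf{x}$, decompose $W(\eta+\delta_{\mathbf{x}})=1+W'_{\mathbf{x}}+R_{\mathbf{x}}$, where:
\begin{itemize}
\item $W'_{\mathbf{x}}$ counts the $k$-tuples $\mathbf{z}\in\eta^k_{\neq}$ with $g(\mathbf{z},\eta)=1$ and $S_{\mathbf{z}}\cap S_{\mathbf{x}}=\varnothing$;
\item $R_{\mathbf{x}}$ collects everything else: the tuples $\mathbf{z}$ from $\eta+\delta_{\mathbf{x}}$ with $S_{\mathbf{z}}\cap S_{\mathbf{x}}\ne\varnothing$ (this includes tuples sharing points with $\mathbf{x}$, by \eqref{eqn:assumption_xS}), other than $\mathbf{x}$ itself.
\end{itemize}
By the locality \eqref{eqn:assumption_g}, on $\{g(\mathbf{x},\eta+\delta_{\mathbf{x}})=1\}$ the variable $g(\mathbf{x},\eta+\delta_{\mathbf{x}})$ depends only on $\eta|_{S_{\mathbf{x}}}$. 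The variable $W'_{\mathbf{x}}$ depends only on $\eta|_{S_{\mathbf{x}}^c}$, because $g(\mathbf{z},\cdot)=1$ forces dependence on $S_{\mathbf{z}}\subset S_{\mathbf{x}}^c$. The Poisson independence property therefore gives
\[
\mathbb{E}[g\,h_A(1+W'_{\mathbf{x}})]=\mathbb{E}[g]\,\mathbb{E}[h_A(1+W'_{\mathbf{x}})].
\]
The remainder is then controlled by three differences, each estimated with $\|\Delta h_A\|$:

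(i) $\mathbb{E}[g]\,|\mathbb{E}h_A(W+1)-\mathbb{E}h_A(1+W'_{\mathbf{x}})|\le \|\Delta h_A\|\,\mathbb{E}[g]\,\mathbb{E}[W-W'_{\mathbf{x}}]$. By Mecke this becomes the $E_2$-type integral over the $\mathbf{z}$ with $S_{\mathbf{z}}\cap S_{\mathbf{x}}\ne\varnothing$.

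(ii) $\mathbb{E}\bigl[g\,|h_A(1+W'_{\mathbf{x}}+R_{\mathbf{x}})-h_A(1+W'_{\mathbf{x}})|\bigr]\le\|\Delta h_A\|\,\mathbb{E}[g\,R_{\mathbf{x}}]$. Applying Mecke to $R_{\mathbf{x}}$ splits it by overlap:
\begin{itemize}
\item tuples disjoint from $\mathbf{x}$ give the $E_3$ integral;
\item tuples sharing an index set $I$ with $\mathbf{x}$ give the $E_4$ summands.
\end{itemize}

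The main obstacle is the bookkeeping needed to land exactly on the constant $\tfrac12\min\{1,1/a\}(E_2+E_3+E_4)$. Since $E_2,E_3,E_4$ carry a factor $2/(k!)^2$ (respectively $2/k!$), the desired bound asks for half of each. Two things have to be checked carefully:
\begin{itemize}
\item the $1/k!$ symmetrisation when counting unordered tuples $\mathbf{z}$;
\item that the tuples $\mathbf{z}$ present in $\eta$ but destroyed by adding $\mathbf{x}$ (those with $g(\mathbf{z},\eta)=1$ but $g(\mathbf{z},\eta+\delta_{\mathbf{x}})=0$) are counted only once.
\end{itemize}
These destroyed tuples are absorbed into (i), since $W-W'_{\mathbf{x}}$ already counts them.

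Combining (i), (ii) and the first term, and taking the supremum over $A$, yields the theorem.
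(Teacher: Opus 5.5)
Your proposal is correct and follows the paper's proof essentially step by step. Your $W'_{\mathbf{x}}$ is the paper's $Z_{\mathbf{x}}$, and difference (i) is its term $R_1$, which yields $E_2$. Difference (ii), with $R_{\mathbf{x}}$ split into overlapping tuples of $\eta$ and the mixed tuples $\widehat{Z}_{\mathbf{x}}$, is its term $R_2$, which yields $E_3+E_4$. The bookkeeping you flag is not a real obstacle: after Mecke each term carries $\|\Delta h\|_\infty/(k!)^2$ (respectively $\|\Delta h\|_\infty/k!$), which is exactly half of the prefactor $2/(k!)^2$ (respectively $2/k!$) built into $E_2,E_3,E_4$.
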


\begin{remark}
{The upper bound in Theorem \ref{thm:BSY} also dominates} the total variation distance $\mathrm{d}_{\mathrm{TV}}(\xi(\mathbb{Y}),Y)$ considered in Theorem \ref{thm:Poisson-Approximation}. However, the bound in Theorem \ref{thm:Poisson-Approximation} is much stronger for large values of $a$ due to the {prefactors $\min\{1,1/\sqrt{a}\}$ and $\min\{1,1/a\}/2$.} Because of the counterintuitive behaviour to be smaller the larger the parameter $a$ of the Poisson distribution is, such factors are called magic factors.
\end{remark}

\begin{proof}[Proof of Theorem \ref{thm:Poisson-Approximation}]
Our proof relies on the Chen--Stein method for Poisson approximation. {For a function $h:\mathbb{N}_0\to\RR$ we} put
$$
\Delta h(k) := h(k+1) - h(k) \quad \text{for} \quad k\in\mathbb{N}_0 \qquad \text{and} \qquad \|h\|_{\infty} := \sup_{k\in\mathbb{N}_0} |h(k)|,
$$
and let $\mathcal{H}_a$ be the set of all functions $h:\mathbb{N}_0\to\RR$ for which
\begin{equation}\label{eq:ChenSteinSolutionBound}
\|h\|_\infty \leq \min\{1,1/\sqrt{a}\} \quad \text{and} \quad \| \Delta h \|_\infty \leq \min\{1,1/a\}.  
\end{equation}
Moreover, for $m,n\in\NN_0$ we have
\begin{equation}\label{eq:ChenSteinSolutionIncrement}
|h(m)-h(n)|\leq\|\Delta h\|_\infty|m-n|.
\end{equation}
From \cite[Theorem 4.5]{Ross} it follows that
\begin{equation}\label{eqn:BoundChenStein}
\mathrm{d}_{\mathrm{TV}}(\xi(\mathbb{Y}),Y) \leq \sup_{h\in\mathcal{H}_a} |\mathbb{E}[ a\, h(\xi(\mathbb{Y})+1) - \xi(\mathbb{Y}) h(\xi(\mathbb{Y})) ]|.
\end{equation}
In the following we consider the right-hand side for a fixed function $h\in\mathcal{H}_a$. First note that
\begin{equation}\label{eqn:bound_R0}
\big| \mathbb{E}[ a\, h(\xi(\mathbb{Y})+1) - \mathbb{E}[\xi(\mathbb{Y})] \mathbb{E}[h(\xi(\mathbb{Y})+1)] \big| \leq \|h\|_\infty |\mathbb{E}[\xi(\mathbb{Y})] -a| \leq \min\{1,1/\sqrt{a}\} |\mathbb{E}[\xi(\mathbb{Y})] -a|
\end{equation}
by \eqref{eq:ChenSteinSolutionBound}.
From the multivariate Mecke formula for Poisson processes \cite[Theorem 4.4]{LP} we obtain
$$
\mathbb{E}[\xi(\mathbb{Y}) h(\xi(\mathbb{Y}))] = \mathbb{E}\frac{1}{k!}\sum_{\mathbf{x}\in\eta^k_{\neq}} g(\mathbf{x},\eta) h(\xi(\mathbb{Y};\eta) = \frac{1}{k!}\int_{\mathbb{X}^k} \mathbb{E}[g(\mathbf{x},\eta+\delta_{\mathbf{x}}) h(\xi(\mathbb{Y};\eta+\delta_{\mathbf{x}})) ] \, \mathbf{K}^k(\dint\mathbf{x}),
$$
and
\begin{align*}
\mathbb{E}[\xi(\mathbb{Y})] \mathbb{E}[h(\xi(\mathbb{Y})+1)] & = \frac{1}{k!}\,\mathbb{E}\sum_{\mathbf{x}\in\eta^k_{\neq}} g(\mathbf{x},\eta) \;\mathbb{E}[h(\xi(\mathbb{Y})+1)] \\
& = \frac{1}{k!} \int_{\mathbb{X}^k} \mathbb{E}[g(\mathbf{x},\eta+\delta_{\mathbf{x}})] \mathbb{E}[h(\xi(\mathbb{Y})+1)] \, \mathbf{K}^k(\dint\mathbf{x}).
\end{align*}
Combining this with \eqref{eqn:bound_R0} shows that
\begin{align}
& |\mathbb{E}[ a\, h(\xi(\mathbb{Y})+1) - \xi(\mathbb{Y}) h(\xi(\mathbb{Y})) ]| \notag \\
& \leq |\mathbb{E}[ a\, h(\xi(\mathbb{Y})+1) - \mathbb{E}[\xi(\mathbb{Y})] \mathbb{E}[h(\xi(\mathbb{Y})+1)]| + |\mathbb{E}[\xi(\mathbb{Y})] \mathbb{E}[h(\xi(\mathbb{Y})+1)] - \mathbb{E}[\xi(\mathbb{Y}) h(\xi(\mathbb{Y})) ]| \notag \\
& \leq \min\{1,1/\sqrt{a}\} |\mathbb{E}[\xi(\mathbb{Y})] -a| \notag \\
& \quad + \bigg| \frac{1}{k!} \int_{\mathbb{X}^k} \mathbb{E}[g(\mathbf{x},\eta+\delta_{\mathbf{x}})] \mathbb{E}[h(\xi(\mathbb{Y})+1)] - \mathbb{E}[g(\mathbf{x},\eta+\delta_{\mathbf{x}}) h(\xi(\mathbb{Y};\eta+\delta_{\mathbf{x}})) ] \, \mathbf{K}^k(\dint\mathbf{x}) \bigg|. \label{eqn:expression_Stein}
\end{align}

Next, for $\mathbf{x}\in\XX^k$ we define
$$
Z_{\mathbf{x}} := \frac{1}{k!} \sum_{\mathbf{z}\in\eta^k_{\neq}: S_{\mathbf{x}}\cap S_{\mathbf{z}}=\varnothing} g(\mathbf{z},\eta).
$$
From \eqref{eq:ChenSteinSolutionIncrement} and the multivariate Mecke formula it follows that
\begin{align*}
\left| \mathbb{E}[h(\xi(\mathbb{Y})+1)] - \mathbb{E}[h(Z_{\mathbf{x}}+1)] \right| & \leq \|\Delta h\|_\infty \mathbb{E}[ |\xi(\mathbb{Y}) - Z_{\mathbf{x}}| ] \\
& \leq {\|\Delta h\|_\infty\over k!} \, \mathbb{E} \sum_{\mathbf{z}\in\eta^k_{\neq}: S_{\mathbf{x}}\cap S_{\mathbf{z}}\neq\varnothing} g(\mathbf{z},\eta) \\
& = {\|\Delta h\|_\infty \over k!} \int_{\mathbb{X}^k} \mathbbm{1}\{ S_{\mathbf{x}}\cap S_{\mathbf{z}}\neq\varnothing \} \mathbb{E}[g(\mathbf{z},\eta+\delta_{\mathbf{z}})]  \,\mathbf{K}^k(\dint\mathbf{z}).
\end{align*}
This implies that
\begin{align}\label{eqn:bound_R1}
\notag R_1 &:= 
\bigg|{1\over k!}\int_{\XX^k}\EE[g(\mathbf{x},\eta+\delta_{\mathbf{x}})]\,\big(\EE[ h(\xi(\mathbb{Y})+1)] - \mathbb{E}[h(Z_{\mathbf{x}}+1)]\big)\,\mathbf{K}^k(\dint\mathbf{x})\bigg|\\
\notag & \leq   \frac{\|\Delta h\|_\infty}{(k!)^2} \int_{\mathbb{X}^k} \int_{\mathbb{X}^k} \mathbbm{1}\{ S_{\mathbf{x}}\cap S_{\mathbf{z}}\neq\varnothing \} \mathbb{E}[g(\mathbf{x},\eta+\delta_{\mathbf{x}})]  \mathbb{E}[g(\mathbf{z},\eta+\delta_{\mathbf{z}})]  \, \mathbf{K}^k(\dint \mathbf{z}) \, \mathbf{K}^k(\dint \mathbf{x}) \\
& \leq \frac{\min\{1,1/a\}}{2} E_2
\end{align}
by \eqref{eq:ChenSteinSolutionBound} and the definition of $E_2$.
For $\mathbf{x}\in\mathbb{X}^k$, $g(\mathbf{x},\eta+\delta_{\mathbf{x}})$ and $Z_{\mathbf{x}}$ depend only on $\eta|_{S_{\mathbf{x}}}$ and $\eta|_{S_{\mathbf{x}}^c}$ by \eqref{eqn:assumption_g}, respectively. {By the independence property of Poisson processes,} $g(\mathbf{x},\eta+\delta_{\mathbf{x}})$ and $Z_{\mathbf{x}}$ are independent random variables.
It follows that $\mathbb{E}[g(\mathbf{x},\eta+\delta_{\mathbf{x}})] \mathbb{E}[h(Z_{\mathbf{x}}+1)]=\mathbb{E}[g(\mathbf{x},\eta+\delta_{\mathbf{x}}) h(Z_{\mathbf{x}}+1)]$ and hence
\begin{equation}\label{eqn:independence_integrals}
\frac{1}{k!} \int_{\mathbb{X}^k} \mathbb{E}[g(\mathbf{x},\eta+\delta_{\mathbf{x}})] \mathbb{E}[h(Z_{\mathbf{x}}+1)] \, \mathbf{K}^k(\dint\mathbf{x}) = \frac{1}{k!} \int_{\mathbb{X}^k} \mathbb{E}[g(\mathbf{x},\eta+\delta_{\mathbf{x}}) h(Z_{\mathbf{x}}+1)] \, \mathbf{K}^k(\dint\mathbf{x}).
\end{equation}
Moreover, we define
\begin{align*}
\widehat{Z}_{\mathbf{x}} := & \xi(\mathbb{Y};\eta+\delta_{\mathbf{x}}) - g(\mathbf{x},\eta+\delta_{\mathbf{x}}) - \frac{1}{k!} \sum_{\mathbf{z}\in\eta^k_{\neq}} g(\mathbf{z},\eta+\delta_{\mathbf{x}}) = \frac{1}{k!} \sum_{\mathbf{z}\in(\eta+\delta_{\mathbf{x}})^k_{\neq}\setminus(\eta_{\neq}^k\cup (\delta_{\mathbf{x}})^k_{\neq})} g(\mathbf{z},\eta+\delta_{\mathbf{x}}) \\
= & \sum_{\varnothing\subsetneq I \subsetneq \{1,\hdots,k\}} \frac{1}{(k-|I|)!} \sum_{\mathbf{z}\in\eta_{\neq}^{k-|I|}} g((\mathbf{x}_I,\mathbf{z}),\eta+\delta_{\mathbf{x}})
\end{align*}
for $\mathbf{x}\in\mathbb{X}^k$. Using this notation as well as the fact that, by {\eqref{eqn:assumption_xS} and} \eqref{eqn:assumption_g},
$$
\frac{1}{k!} \sum_{\mathbf{z}\in\eta^k_{\neq}: S_{\mathbf{x}}\cap S_{\mathbf{z}}=\varnothing} g(\mathbf{z},\eta) = \frac{1}{k!} \sum_{\mathbf{z}\in\eta^k_{\neq}: S_{\mathbf{x}}\cap S_{\mathbf{z}}=\varnothing} g(\mathbf{z},\eta+\delta_{\mathbf{x}}),
$$
we obtain
\begin{align*}
&|\xi(\mathbb{Y};\eta+\delta_{\mathbf{x}}) - (Z_{\mathbf{x}}+1)| \\
& = \bigg| \xi(\mathbb{Y};\eta+\delta_{\mathbf{x}}) - \frac{1}{k!} \sum_{\mathbf{z}\in\eta^k_{\neq}: S_{\mathbf{x}}\cap S_{\mathbf{z}}=\varnothing} g(\mathbf{z},\eta) -1 \bigg| \\
&=\bigg|\widehat{Z}_{\mathbf{x}} + g(\mathbf{x},\eta+\delta_{\mathbf{x}}) + \frac{1}{k!} \sum_{\mathbf{z}\in\eta^k_{\neq}} g(\mathbf{z},\eta+\delta_{\mathbf{x}}) - \frac{1}{k!} \sum_{\mathbf{z}\in\eta^k_{\neq}: S_{\mathbf{x}}\cap S_{\mathbf{z}}=\varnothing} g(\mathbf{z},\eta+\delta_{\mathbf{x}})-1\bigg|\\
& = \bigg| \widehat{Z}_{\mathbf{x}} + \frac{1}{k!} \sum_{\mathbf{z}\in\eta^k_{\neq}: S_{\mathbf{x}}\cap S_{\mathbf{z}}\neq\varnothing} g(\mathbf{z},\eta+\delta_{\mathbf{x}}) + g(\mathbf{x},\eta+\delta_{\mathbf{x}}) - 1 \bigg|\\
& \leq \widehat{Z}_{\mathbf{x}} + \frac{1}{k!} \sum_{\mathbf{z}\in\eta^k_{\neq}: S_{\mathbf{x}}\cap S_{\mathbf{z}}\neq\varnothing} g(\mathbf{z},\eta+\delta_{\mathbf{x}}) + |g(\mathbf{x},\eta+\delta_{\mathbf{x}}) - 1|.
\end{align*}
It follows, using \eqref{eq:ChenSteinSolutionIncrement}, that
\begin{align*}
R_2 & :=\bigg|\frac{1}{k!}\int_{\mathbb{X}^k} \mathbb{E}[g(\mathbf{x},\eta+\delta_{\mathbf{x}}) h(\xi(\mathbb{Y};\eta+\delta_{\mathbf{x}})) ] \, \mathbf{K}^k(\dint\mathbf{x}) - \frac{1}{k!} \int_{\mathbb{X}^k} \mathbb{E}[g(\mathbf{x},\eta+\delta_{\mathbf{x}}) h(Z_{\mathbf{x}}+1)] \, \mathbf{K}^k(\dint\mathbf{x})\bigg| \\
& \leq \frac{\|\Delta h\|_\infty}{k!}  \int_{\mathbb{X}^k} \mathbb{E}[g(\mathbf{x},\eta+\delta_{\mathbf{x}}) |\xi(\mathbb{Y};\eta+\delta_{\mathbf{x}}) - (Z_{\mathbf{x}}+1)| ] \, \mathbf{K}^k(\dint\mathbf{x}) \\
& \leq \frac{\|\Delta h\|_\infty}{k!}  \int_{\mathbb{X}^k} \mathbb{E}[g(\mathbf{x},\eta+\delta_{\mathbf{x}}) \widehat{Z}_{\mathbf{x}} ] \, \mathbf{K}^k(\dint\mathbf{x}) \\
& \quad + \frac{\|\Delta h\|_\infty}{(k!)^2}  \int_{\mathbb{X}^k} \mathbb{E}\bigg[g(\mathbf{x},\eta+\delta_{\mathbf{x}}) \sum_{\mathbf{z}\in\eta^k_{\neq}: S_{\mathbf{x}}\cap S_{\mathbf{z}}\neq\varnothing} g(\mathbf{z},\eta+\delta_{\mathbf{x}}) \bigg] \, \mathbf{K}^k(\dint\mathbf{x}).
\end{align*}
In the last step,  we used the fact that $g(\mathbf{x},\eta+\delta_{\mathbf{x}})|g(\mathbf{x},\eta+\delta_{\mathbf{x}})-1|=0$ as the function $g$ can only take the values $0$ and $1$. {Applying} the multivariate Mecke formula once again, together with the definition of $\widehat{Z}_{\mathbf{x}}$, we derive
\begin{equation}\label{eqn:bound_R2}
R_2 \leq \frac{\min\{1,1/a\}}{2} (E_3 + E_4)
\end{equation}
from \eqref{eq:ChenSteinSolutionBound} and the definitions of $E_3$ and $E_4$. From \eqref{eqn:expression_Stein} together with the definitions of $R_1$ and $R_2$ and \eqref{eqn:independence_integrals} we derive
$$
|\mathbb{E}[ a\, h(\xi(\mathbb{Y})+1) - \xi(\mathbb{Y}) h(\xi(\mathbb{Y})) ]| \leq \min\{1,1/\sqrt{a}\} |\mathbb{E}[\xi(\mathbb{Y})] -a| + R_1 + R_2.
$$
Now combining \eqref{eqn:BoundChenStein} with \eqref{eqn:bound_R1} and \eqref{eqn:bound_R2} completes the proof.
\end{proof}

\section{Longest edges in general empty region graphs}\label{sec:AbstracResults}

\subsection{Results}

The goal of this section is to derive general versions of Theorems \ref{th:PoisConv} and \ref{th:gumb}, which make some abstract assumptions on the sets $(S(x,y))_{x,y\in\mathbb{R}^d}$. In the next section, we will then verify these assumptions for Examples \ref{ex:Gabriel}--\ref{ex:ConvBody}, thus establishing Theorems \ref{th:PoisConv} and \ref{th:gumb}. Recall that we write $\LL^d$ for the space of lines in $\RR^d$ and $\mathrm{span}\,B$ for the linear hull of a set $B\subset\RR^d$, while $\nu$ denotes the rotation-invariant Haar probability measure on $\LL^d$ and $\mu$ is the measure on $\mathbb{R}$ such that $\mu([a,\infty))=e^{-a}$ for all $a\in\mathbb{R}$. We equip the closed subsets of $\mathbb{R}^d$ with the Borel $\sigma$-field generated by the Fell topology as in Section \ref{sec:PoissonApprox}.

\begin{theorem}\label{thm:PoisConvAbstract}
Let $\eta_t$ be a stationary Poisson process on $\RR^d$ with intensity $t\geq 2$. To each pair of distinct points $x,y\in\RR^d$ we associate a compact set $S(x,y)$ such that $(x,y)\mapsto S(x,y)$ is measurable and the following assumptions for pairs of distinct points $(x,y),(u,v)\in(\RR^d)^2$ are satisfied:
\begin{itemize}
    \item[(i)] $\vol(S(x,y))=\gamma\|x-y\|^d$ for some $\gamma\in(0,\infty)$,
    \item[(ii)] $S(x,y)\subseteq B^d({x+y\over 2},\alpha\|x-y\|)$ for some {$\alpha\in(\frac{1}{2},\infty)$,}
    \item[(iii)] $\vol(S(x,y)\setminus S(u,v)) \geq \beta\|{x+y\over 2}-{u+v\over 2}\|\|x-y\|^{d-1}$ for some $\beta\in(0,\infty)$ if $\|x-y\|\geq\|u-v\|$ and $S(x,y)\cap S(u,v)\neq\varnothing$,
    \item[(iv)] $S(x,y)=S(y,x)$.
\end{itemize}
Define the point process
$$
\xi_t := {1\over 2}\sum_{(x,y)\in\eta_{t,\neq}^2}\mathbbm{1}\{(\eta_t-\delta_x-\delta_y)(S(x,y))=0\}\delta_{\big({x+y\over 2},\,\gamma t\|x-y\|^d-\log(t)-\log(\kappa_d/(2\gamma)),\,\mathrm{span}\{x-y\}\big)}
$$
on $\RR^d\times\RR\times\LL^d$, denote by $\zeta$ a Poisson process on $\RR^d\times\RR\times\LL^d$ with intensity measure $\vol\otimes\mu\otimes\nu$, and let $W\in\mathcal{B}(\RR^d)$ with $\vol(W)<\infty$ and $b\in\RR$. Then, for any $t\geq 2$,
\begin{equation}\label{eqn:bound_dKR}
\mathrm{d}_{\mathrm{KR}}(\xi_t|_{W\times[b,\infty)\times\LL^d},\zeta|_{W\times[b,\infty)\times\LL^d}) \leq \begin{cases}
        c\log^{-(d-2)}(t), & d \ge 3,\\
        c\log^{-1/2}(t), & d=2,
    \end{cases}
\end{equation}
where $c\in(0,\infty)$ is a constant only depending on $d$, $\vol(W)$, $b$, $\alpha$, $\beta$, and $\gamma$. {Moreover, one has $\xi_t\overset{d}{\longrightarrow} \zeta$ as $t\to\infty$.}
\end{theorem}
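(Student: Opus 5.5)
We apply Theorem \ref{thm:BSY} with $k=2$, $\XX=\RR^d$, $\mathbf{K}=t\,\vol$, and $\mathbb{Y}=W\times[b,\infty)\times\LL^d$. We take $g((x,y),\omega)=\mathbbm{1}\{(\omega-\delta_x-\delta_y)(S(x,y))=0\}\mathbbm{1}\{f((x,y))\in\mathbb{Y}\}$, where $f$ is the deterministic mark map, and take as localising set $S_{(x,y)}:=S(x,y)\cup\{x,y\}$. With this choice \eqref{eqn:assumption_xS}--\eqref{eqn:assumption_f} hold trivially. Write $r_t(s):=((s+\log t+\log(\kappa_d/(2\gamma)))/(\gamma t))^{1/d}$ for the edge length corresponding to transformed length $s$, and put $r_b:=r_t(b)\asymp(\log t/t)^{1/d}$.

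\textbf{Intensity term.} By the Mecke formula,
\[
\mathbf{L}(A\times[s,\infty)\times B)=\tfrac{t^2}{2}\int\!\!\int \mathbbm{1}\{\tfrac{x+y}{2}\in A,\ \|x-y\|\ge r_t(s),\ \mathrm{span}\{x-y\}\in B\}\,e^{-t\gamma\|x-y\|^d}\,\dint y\,\dint x .
\]
Substitute $m=(x+y)/2$ and $x-y=\rho u$, using polar coordinates and $\int_{\SS^{d-1}}=d\kappa_d$ times the uniform measure. The radial integral $\tfrac{t^2}{2}d\kappa_d\int_{r}^\infty\rho^{d-1}e^{-t\gamma\rho^d}\,\dint\rho=\tfrac{t\kappa_d}{2\gamma}e^{-t\gamma r^d}$ equals $e^{-s}$ exactly at $r=r_t(s)$. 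Hence $\mathbf{L}$ coincides with $\vol\otimes\mu\otimes\nu$ on $\mathbb{Y}$, and $\mathrm{d}_{\rm TV}(\mathbf{L},\mathbf{M})=0$. Only $E_2,E_3,E_4$ remain.

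\textbf{$E_2$ and $E_4$.} For $E_2$, the factor $\mathbb{E}g=e^{-t\gamma\|x-y\|^d}$ together with (ii) forces the midpoints to lie within distance $\alpha(\|x-y\|+\|u-v\|)$ of each other. Integrating as above gives a term of order $\vol(W)e^{-2b}\,t\,r_b^d\asymp\log t/t$, which is negligible.

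For $E_4$ the index sets are $I$ with $|I|=1$, i.e.\ pairs of edges sharing a vertex, $(x,y)$ and $(x,z)$. Both regions must be empty, and by (iii) applied around the shared vertex the union has volume at least $\gamma\|x-y\|^d+\beta\|\tfrac{y-z}{2}\|\,\|x-y\|^{d-1}$ when $\|x-y\|\ge\|x-z\|$; the factor $\mathbbm{1}\{f\in\mathbb{Y}\}$ forces both lengths to be at least $r_b$. After integrating out $x,y$ (which gives the $e^{-b}\vol(W)$ factor), the $z$-integral is $t\int e^{-t\beta\|y-z\|r^{d-1}/2}\,\dint z\asymp t\,(t r^{d-1})^{-d}$. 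This contributes $t^{1-d}r^{-d(d-1)}\asymp(t r^d)^{1-d}\cdot$const $\asymp\log^{-(d-1)}t$ per edge. This is smaller than the claimed rate.

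\textbf{$E_3$, the main obstacle.} Here we need two disjoint-vertex edges $(x,y),(u,v)$ with intersecting regions, both empty, each also avoiding the other's vertices. The joint probability is $e^{-t\,\vol(S(x,y)\cup S(u,v))}$. By (iii), with $\|x-y\|\ge\|u-v\|$ and $h$ the midpoint distance,
\[
\vol(S(x,y)\cup S(u,v))\ge\gamma\|x-y\|^d+\beta h\|x-y\|^{d-1}.
\]
Fixing the longer edge and parametrising the shorter one by its midpoint offset $h\le 2\alpha\|x-y\|$, length $\rho'\le\|x-y\|=\rho$, and direction, gives the bound
\[
t^2\int_{\rho'\le\rho}\int_h e^{-t\beta h\rho^{d-1}}\,h^{d-1}\rho'^{d-1}\,\dint h\,\dint\rho' \;\lesssim\; t^2\rho^d\,(t\rho^{d-1})^{-d},
\]
to be multiplied by the outer intensity $\tfrac{t^2}{2}\rho^{d-1}e^{-t\gamma\rho^d}$. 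With $t\rho^d\asymp\log t$ the result is of order $t\rho^d(t\rho^{d})^{-d}\rho^{d}\cdot t\asymp(\log t)^{2-d}$, which is the claimed rate for $d\ge3$.

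The delicate point is the small-$h$ region. There (iii) is weak, and the shorter edge's own emptiness factor $e^{-t\gamma\rho'^d}$ must be used, so the truncation at $r_b$ needs care. In $d=2$ the $h$-integral gives no decay, since the power count is exactly critical. There I would split according to whether $h\le\epsilon\rho$ or $h>\epsilon\rho$, and optimise $\epsilon$ against the $\rho'$-integral near $r_b$, aiming for the $\log^{-1/2}t$ rate. I expect this $E_3$ computation, in particular the $d=2$ balancing, to be the hardest step.

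\textbf{Convergence in distribution.} The convergence $\xi_t\overset{d}{\to}\zeta$ follows by letting $W$ range over balls and $b\to-\infty$. The sets $W\times[b,\infty)\times\LL^d$ exhaust the space, and Kantorovich--Rubinstein convergence on each restriction yields vague distributional convergence by \cite[Chapter 23]{Kallenberg}.
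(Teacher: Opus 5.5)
Your set-up is the paper's: Theorem~\ref{thm:BSY} with $k=2$, localising sets $S(x,y)\cup\{x,y\}$, an exact match $\mathbf{L}=\vol\otimes\mu\otimes\nu$ on $W\times[b,\infty)\times\LL^d$ (for $b>-\log t-\log(\kappa_d/(2\gamma))$), and $E_2=O(\log t/t)$. The gap is in the volume estimate that drives $E_3$ and $E_4$.

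You claim $\vol(S(x,y)\cup S(u,v))\ge\gamma\|x-y\|^d+\beta h\|x-y\|^{d-1}$ with $(x,y)$ the \emph{longer} edge. But (iii) only bounds $\vol(S(x,y)\setminus S(u,v))$ from below. Writing $\vol(S(x,y)\cup S(u,v))=\vol(S(u,v))+\vol(S(x,y)\setminus S(u,v))$ therefore gives $\gamma\|u-v\|^d+\beta h\|x-y\|^{d-1}$, with the \emph{shorter} edge in the first term. Your version would need a lower bound on $\vol(S(u,v)\setminus S(x,y))$, and that can be zero. Take the generalised Gabriel graph with $K=B^d(0,1/4)$, $\|u-v\|=\tfrac34\|x-y\|$ and midpoint offset $0<h\le\|x-y\|/16$. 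Then $S(u,v)\subset S(x,y)$, the four vertex conditions in $E_3$ hold, and the union has volume exactly $\gamma\|x-y\|^d$. The same swap appears in your $E_4$ bound; the correct bound is $\gamma\|x-z\|^d+\beta\|\tfrac{y-z}{2}\|\|x-y\|^{d-1}$. There you also omit the case $S(x,y)\cap S(x,z)=\varnothing$, where (iii) does not apply and one uses $\gamma(\|x-y\|^d+\|x-z\|^d)$ instead.

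With the correct inequality, the exponential weight sits on the shorter length $\rho'$. The longer length $\rho\in[\rho',\infty)$ is controlled only by the polynomial factor from the $h$-integration. What remains is $\int_{\rho'}^\infty\rho^{-(d-1)^2}\,\dint\rho$, which is finite exactly for $d\ge3$ and gives $\log^{-(d-2)}t$. For $E_4$ one likewise integrates out the endpoint of the longer edge after replacing $\|x-y\|^{d-1}$ by $\|x-z\|^{d-1}$. So for $d\ge3$ your rate is right, but your computation does not prove it: it integrates out the shorter edge against a decay factor $e^{-t\gamma\rho^d}$ that is not available.

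For $d=2$ this $\rho$-integral diverges logarithmically, and you give only a plan. The paper averages (iii) with the trivial bound $\vol(S(x,y)\cup S(u,v))\ge\gamma\|x-y\|^2$ in the form
\[
\vol(S(x,y)\cup S(u,v))\ge\gamma\|u-v\|^2+\tfrac{\beta}{2}h\|x-y\|+\tfrac{\gamma}{2}\big(\|x-y\|^2-\|u-v\|^2\big).
\]
This keeps the full constant $\gamma$ on the shorter edge, which is needed so that $e^{-t\gamma\rho'^2}$ cancels a factor $t$. It also adds Gaussian decay in the length excess, which yields the extra factor $(t\rho'^2)^{-1/2}\asymp\log^{-1/2}t$. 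Your split at $h=\epsilon\rho$ is not hopeless: on $\{h\le\epsilon\rho\}$ the trivial bound together with the constraint $\rho'\in[r_b,\rho]$ does help. But it also needs the corrected inequality on $\{h>\epsilon\rho\}$, and as written it is not yet an argument.
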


Using the Poisson approximation result Theorem \ref{thm:Poisson-Approximation}, we can also derive the following quantitative bound for the Gumbel approximation of the length of the longest edge with midpoint in an observation window for an empty region graph based on sets $S(x,y)_{x,y\in\mathbb{R}^d}$ satisfying the assumptions of the previous theorem.

\begin{theorem}\label{thm:GumbelAbstract}
Assume the same set-up as in Theorem \ref{thm:PoisConvAbstract} and denote by $L_{t,W}$ the length of the longest edge with midpoint in $W$, i.e., 
$$
L_{t,W} := \max\Big\{\|x-y\|:(x,y)\in\eta_{t,\neq}^2,(\eta_t-\delta_x-\delta_y)(S(x,y))=0,{x+y\over 2}\in W\Big\}. 
$$
Then, for any $t\geq 2$,
$$
\mathrm{d}_{\mathrm{Kol}}\big(\gamma tL_{t,W}^d-\log(t)-\log(\kappa_d/(2\gamma)),G_W\big) \leq \begin{cases}
        c\log^{-(d-2)}(t), & d \ge 3,\\
        c\log^{-1/2}(t), & d=2,
    \end{cases}
$$
where $c$ is a constant only depending on $d$, $\vol(W)$, $\alpha$, $\beta$, and $\gamma$, and $G_W$ is a Gumbel distributed random variable with distribution function $\PP(G_W\leq s)=\exp(-\vol(W)e^{-s})$ for $s\in\RR$.
\end{theorem}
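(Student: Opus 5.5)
Since $L_{t,W}$ is a maximum, the Kolmogorov distance reduces to a statement about counts. For $s\in\RR$, the event $\{\gamma tL_{t,W}^d-\log(t)-\log(\kappa_d/(2\gamma))\le s\}$ coincides with $\{\xi_t(W\times(s,\infty)\times\LL^d)=0\}$. Hence
$$
\big|\PP(\gamma tL_{t,W}^d-\log t-\log(\kappa_d/(2\gamma))\le s)-\PP(G_W\le s)\big| = \big|\PP(\xi_t(W\times(s,\infty)\times\LL^d)=0)-e^{-\vol(W)e^{-s}}\big| .
$$
The right-hand side is at most $\mathrm{d}_{\mathrm{TV}}(\xi_t(W\times(s,\infty)\times\LL^d),Y_s)$ with $Y_s$ Poisson of parameter $a_s:=\vol(W)e^{-s}$. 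I would bound this distance by Theorem \ref{thm:Poisson-Approximation}, applied with $k=2$, $\mathbb{X}=\RR^d$ and $\mathbf{K}=t\,\vol$. The function $g$ is the indicator of the emptiness of $S(x,y)$ times the indicator that the mark lies in $W\times(s,\infty)\times\LL^d$. The localising set is $S(x,y)\cup\{x,y\}$, which satisfies \eqref{eqn:assumption_xS}--\eqref{eqn:assumption_f}; some care is needed because $x,y$ need not lie in $S(x,y)$. The essential point is that the resulting bound must be uniform in $s\in\RR$, which is where the magic factors enter.

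\textbf{Step 1: the mean.} By the Mecke formula and (i),
$$
\EE\,\xi_t(\cdot)=\tfrac{t^2}{2}\int\!\!\int \mathbbm{1}\{\tfrac{x+y}{2}\in W,\ \gamma t\|x-y\|^d>\log t+\log(\kappa_d/(2\gamma))+s\}\,e^{-\gamma t\|x-y\|^d}\,\dint y\,\dint x .
$$
Substituting midpoint and difference and using polar coordinates, with $r=\gamma t\|x-y\|^d$, gives exactly $\vol(W)e^{-s}$ whenever the threshold is positive. The error comes only from the truncation near $r=0$. That truncation occurs only when $s<-\log t-\log(\kappa_d/(2\gamma))$, and there the Gumbel probability $e^{-a_s}$ is super-polynomially small. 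So for $s$ very negative I would instead compare both probabilities with $0$ directly, using monotonicity in $s$ together with the estimate at $s_0=-\log\log t$, say. This leaves $|\EE\xi_t-a_s|=0$ in the relevant range.

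\textbf{Step 2: the error terms $E_2,E_3,E_4$.} These are the same integrals that appear in the proof of Theorem \ref{thm:PoisConvAbstract}, now with lower threshold $s$ in place of $b$. I would reuse those estimates but track the dependence on $s$; in the paper's order, presumably these bounds are proved for general $b$ with explicit dependence. Heuristically, $E_2$ is of order $a_s^2\cdot(\log t)/t$. For $E_3$ with $\|x-y\|\ge\|u-v\|$, assumption (iii) gives $\vol(S(x,y)\cup S(u,v))\ge\gamma\|x-y\|^d+\beta\|m-m'\|\|x-y\|^{d-1}$, where $m,m'$ are the midpoints. Integrating over the midpoint offset then yields a factor of order $(t\|x-y\|^{d-1})^{-d}$ times $e^{-\gamma t\|x-y\|^d}$. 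Since $\|x-y\|^d\approx(\log t)/(\gamma t)$, this produces a bound of order $a_s\,e^{-s}\log^{-(d-1)}(t)$-type terms. The $E_4$ term, which involves a shared vertex, and the angular part are where the exponents $d-2$ (and $1/2$ for $d=2$) arise.

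\textbf{Step 3: uniformity and the main obstacle.} The main obstacle is uniformity in $s$. The bounds from Theorem \ref{thm:PoisConvAbstract} grow like $e^{-2s}$ or $e^{-s}$ as $s\to-\infty$. The magic factor $\min\{1,1/a_s\}/2$ cancels one power of $a_s$, leaving a bound that is still too large for very negative $s$. I would handle this by splitting. For $s\ge s_0:=-\log(c'\log\log t)$, or alternatively $s_0$ chosen so that $a_{s_0}$ grows slowly, the bound times $\min\{1,1/a_s\}$ is at most $C\log^{-(d-2)}(t)$ (respectively $C\log^{-1/2}(t)$) up to a factor absorbed by choosing $s_0$ appropriately. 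For $s<s_0$, both distribution functions are bounded by their values at $s_0$. Each value at $s_0$ is at most $e^{-a_{s_0}}$ plus the error at $s_0$, and $e^{-a_{s_0}}$ is below the target rate once $a_{s_0}\ge (d-2)\log\log t$. This choice costs at most a power of $\log\log t$, so I would need the $E$-bounds to be linear in $a_s$ after the magic factor. Verifying that linearity carefully in $E_3$ and $E_4$ is the step I expect to be hardest.
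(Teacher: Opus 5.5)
Your proposal is correct and is essentially the paper's proof: reduce to $\PP(\xi_t(W\times(s,\infty)\times\LL^d)=0)$, apply Theorem \ref{thm:Poisson-Approximation} with $a=\vol(W)e^{-s}$, and use Lemmas \ref{lem:intmeas} and \ref{lem:estimates} with $b=s$ so that the magic factor absorbs the $e^{-s}$. Small $s$ are then handled by monotonicity below a cutoff $s_0$; the paper takes $s_0=-\tfrac14\log t-\tau$ with $\tau=\log(\kappa_d/(2\gamma))$, which gives $s+\log t+\tau\ge\tfrac34\log t$ for all $t\ge 2$, while your $\log\log t$-scale cutoff also works (small $t$ go into the constant) with no $\log\log t$ loss, since every term in Lemma \ref{lem:estimates} that is not linear in $e^{-s}$ carries a factor $1/t$. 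One correction to your Step 2 heuristic: assumption (iii) only yields $\vol(S(x,y)\cup S(u,v))\ge\gamma\|u-v\|^d+\beta\|m-m'\|\,\|x-y\|^{d-1}$, with the \emph{shorter} edge in the first term rather than $\gamma\|x-y\|^d$, and this missing exponential decay in the longer edge is why $E_3$ is linear in $e^{-s}$ and produces the rates $\log^{-(d-2)}$ and $\log^{-1/2}$, whereas $E_4$ only contributes $\log^{-(d-1)}$.
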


The proofs of Theorem \ref{thm:PoisConvAbstract} and Theorem \ref{thm:GumbelAbstract} will be given in Section \ref{subsec:AbstractProofs} below after having developed some preparatory estimates, which are the content of the next section.

\subsection{Preparatory estimates}\label{subsec:AbstractPrep}

The core of the proof of Theorem \ref{thm:PoisConvAbstract} is an application of Theorem \ref{thm:BSY}, while Theorem \ref{thm:GumbelAbstract} will be obtained using Theorem \ref{thm:Poisson-Approximation}. In particular, we need to find suitable estimates for the quantities {$E_2$, $E_3$, and $E_4$} appearing therein. To start, we determine the intensity measure of the point process $\xi_t$. Throughout the proofs, we use the abbreviation $\tau:=\log(\kappa_d/(2\gamma))$.

\begin{lemma}\label{lem:intmeas}
Let the assumptions of Theorem \ref{thm:PoisConvAbstract} be satisfied. For all $t\geq 2$ the intensity measure of $\xi_t$ is given by
$$
\mathbf{M}_t(U):=\mathbb{E}[ \xi_t(U) ] = (\vol\otimes \mu|_{[-\log(t)-\tau,\infty)}\otimes\nu)(U)
$$
for $U\in\mathcal{B}(\RR^d\times\RR\times\LL^d)$. 
\end{lemma}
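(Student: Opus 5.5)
The plan is to compute $\mathbb{E}[\xi_t(U)]$ with the bivariate Mecke formula and then change variables to midpoint, length and direction. By the multivariate Mecke formula \cite[Theorem 4.4]{LP} applied to $\eta_t$, whose intensity measure is $t\,\vol$, we get
\begin{align*}
\mathbb{E}[\xi_t(U)] = \frac{t^2}{2}\int_{\RR^d}\int_{\RR^d} \PP\big(\eta_t(S(x,y))=0\big)\,\mathbbm{1}\Big\{\Big(\tfrac{x+y}{2},\gamma t\|x-y\|^d-\log(t)-\tau,\mathrm{span}\{x-y\}\Big)\in U\Big\}\,\dint y\,\dint x .
\end{align*}
Here the indicator of emptiness evaluated at $\eta_t+\delta_x+\delta_y$ minus $\delta_x+\delta_y$ is just $\mathbbm{1}\{\eta_t(S(x,y))=0\}$. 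Its expectation is $\exp(-t\,\vol(S(x,y)))=\exp(-\gamma t\|x-y\|^d)$ by assumption (i), since $S(x,y)$ is compact and so has finite volume.

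Next I will substitute $m=(x+y)/2$ and $w=y-x$. This linear map has Jacobian determinant of absolute value $1$, since $\dint x\,\dint y=\dint m\,\dint w$. Writing $w=r u$ in polar coordinates with $r>0$ and $u\in\SS^{d-1}$, we have $\dint w = r^{d-1}\dint r\,\sigma(\dint u)$, where $\sigma$ is spherical Lebesgue measure with total mass $d\kappa_d$. The map $u\mapsto\mathrm{span}\{u\}$ pushes $\sigma/(d\kappa_d)$ forward to $\nu$, because of rotation invariance and uniqueness of the invariant probability measure on $\LL^d$. So the integral becomes
\[
\frac{t^2}{2}\, d\kappa_d\int_{\RR^d}\int_{\LL^d}\int_0^\infty e^{-\gamma t r^d}\,\mathbbm{1}\{(m,\gamma t r^d-\log t-\tau,L)\in U\}\,r^{d-1}\,\dint r\,\nu(\dint L)\,\dint m .
\]

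Then I substitute $s=\gamma t r^d-\log t-\tau$, so that $\dint s=d\gamma t r^{d-1}\dint r$. The range $r\in(0,\infty)$ becomes $s\in(-\log t-\tau,\infty)$, and $e^{-\gamma t r^d}=e^{-s}\,t^{-1}e^{-\tau}=e^{-s}\,\frac{2\gamma}{t\kappa_d}$. The resulting constant is
\[
\frac{t^2}{2}\,d\kappa_d\cdot\frac{1}{d\gamma t}\cdot\frac{2\gamma}{t\kappa_d}=1 ,
\]
so the integrand is $e^{-s}\mathbbm{1}\{s>-\log t-\tau\}$ against $\dint s$. This is exactly the density of $\mu|_{[-\log(t)-\tau,\infty)}$, and the claim follows for all $U$.

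The only delicate points are the measurability needed for Mecke, which follows from measurability of $(x,y)\mapsto S(x,y)$, and the identification of the pushforward of normalized spherical measure as $\nu$. Neither is a real obstacle. Assumptions (ii)–(iv) are not needed here, apart from symmetry, which is already built into the factor $\tfrac12$.
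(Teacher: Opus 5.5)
Your proof is correct and follows the paper's route: the Mecke formula, the void probability $e^{-\gamma t\|x-y\|^d}$ from assumption (i), the midpoint/difference substitution with unit Jacobian, and spherical coordinates with the pushforward of normalised spherical measure to $\nu$. The only difference is minor. You substitute $s=\gamma t r^d-\log(t)-\tau$ and read off the density $e^{-s}\mathbbm{1}\{s>-\log(t)-\tau\}$ for arbitrary $U$ directly. The paper instead evaluates the measure only on sets $A\times[a,\infty)\times D$ and then appeals to the fact that such sets determine the measure.
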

\begin{proof}
Let $A \in\mathcal{B}(\RR^d)$ with $\vol(A)<\infty$, let $D\subseteq\LL^d$ be measurable, and let $a\in\RR$. By the multivariate Mecke formula, the void probability of a stationary Poisson process, and assumption (i) of Theorem \ref{thm:PoisConvAbstract} we have
\begin{align*}
    &\mathbb{E}[\xi_t(A \times [a, \infty)\times D)]\\
    &= \frac{t^2}{2} \int_{\RR^d}\int_{\RR^d} \mathbb{P}(\eta_t(S(x,y)) = 0) \mathbbm{1}\Big\{\frac{x+y}{2} \in A, \gamma t\lVert x - y \rVert^d - \log(t) - \tau \ge a,\mathrm{span}\{x-y\}\in D\Big\} \,\dint x \, \dint y\\
		    &= \frac{t^2}{2} \int_{\RR^d}\int_{\RR^d} e^{-t\vol(S(x,y))} \mathbbm{1}\Big\{\frac{x+y}{2} \in A, \gamma t\lVert x - y \rVert^d - \log(t) - \tau \ge a,\mathrm{span}\{x-y\}\in D\Big\} \,\dint x \, \dint y\\
&= \frac{t^2}{2} \int_{\RR^d}\int_{\RR^d} e^{-\gamma t\|x-y\|^d} \mathbbm{1}\Big\{\frac{x+y}{2} \in A, \gamma t\lVert x - y \rVert^d - \log(t) - \tau \ge a,\mathrm{span}\{x-y\}\in D\Big\} \,\dint x \, \dint y.
\end{align*}
Now the substitution $z=(x+y)/2$ and $u=x-y$ leads to 
\begin{align*}
\mathbb{E}[\xi_t(A \times [a, \infty)\times D)] & = \frac{t^2}{2} \int_{\RR^d}\int_{\RR^d} e^{-\gamma t\|u\|^d} \mathbbm{1}\Big\{z \in A, \gamma t\lVert u \rVert^d - \log(t) - \tau \ge a,\mathrm{span}\{u\}\in D\Big\} \,\dint z \, \dint u \\
& = \frac{t^2 \vol(A)}{2} \int_{\RR^d} e^{-\gamma t\lVert u \rVert^d} \mathbbm{1}\bigg\{ \lVert u \rVert^d  \ge \max\bigg\{\frac{\log(t)+\tau+a}{\gamma t},0\bigg\},\mathrm{span}\{u\}\in D\bigg\}\,\dint u.
\end{align*}
Using spherical coordinates and the abbreviation
$$
a_t:= \max\Big\{\frac{\log(t)+\tau+a}{\gamma t},0\Big\}^{1/d},
$$
we obtain 
$$
   \mathbb{E}[\xi_t(A \times [a, \infty)\times D)] 
    = \frac{t^2 \vol(A)}{2} d \kappa_d\nu(D) \int_{a_t}^\infty e^{-t \gamma r^d} r^{d-1} \, \dint r = \frac{t \vol(A)\nu(D) \kappa_d}{2\gamma} e^{-t\gamma  a_t^d}.$$
If $a_t>0$, we have
$$
\frac{\kappa_d t}{2\gamma}e^{-t \gamma a_t^d} = \frac{\kappa_d t}{2\gamma}e^{-(\log(t)+\tau+a)} = \frac{\kappa_d}{2\gamma} \frac{2\gamma}{\kappa_d} e^{-a} = e^{-a} 
$$
so that
$$
\mathbb{E}[\xi_t(A \times [a, \infty)\times D)] = \begin{cases} \vol(A)\nu(D) e^{-a}, &  a> -\log(t) - \tau, \\ \frac{\kappa_d\vol(A)\nu(D)t}{2\gamma}, & a\le - \log(t) - \tau.  \end{cases}
$$
This can be rewritten as
$$
\mathbb{E}[\xi_t(A \times [a, \infty)\times D)] = \vol(A)\nu(D) \mu|_{[-\log(t)-\tau,\infty)}([a,\infty)).
$$
As the measure $\mathbf{M}_t$ is determined by the sets of the form $A\times [a,\infty)\times D$, this proves the formula for $\mathbf{M}_t$.
\end{proof}

Defining
$$
b_t:= \max\Big\{\frac{\log(t)+\tau+b}{\gamma t},0\Big\}^{1/d}
$$
and then
\[
g_{b,t}(x,y,\eta_t) := \mathbbm{1}\Big\{(\eta_t - \delta_x - \delta_y)(S(x,y)) = 0, \frac{x+y}{2} \in W,\lVert x - y\rVert \ge b_t\Big\},
\]
we have that
\[
\xi_t|_{W \times [b,\infty)\times \LL^d} = \frac{1}{2} \sum_{(x,y) \in \eta^2_{t,\ne}} g_{b,t}(x,y,\eta_t) \delta_{\big(\frac{x+y}{2}, \gamma t\lVert x - y\rVert^d - \log(t) - \tau,\mathrm{span}\{x-y\}\big)},
\]
which is exactly the form required in Theorem \ref{thm:BSY}. In what follows, we derive bounds on the quantities $E_2$, $E_3$, and $E_4$ appearing there.

\begin{lemma}\label{lem:estimates}
    Suppose that the assumptions of Theorem \ref{thm:PoisConvAbstract} are satisfied and that $\log(t)+\tau+b>0$. Then, for the point process $\xi_t|_{W \times [b,\infty)\times\LL^d}$, the quantities $E_2$, $E_3$, and $E_4$ in Theorem \ref{thm:BSY} {with $S_{(x,y)}:=S(x,y)\cup\{x,y\}$ for $x,y\in\mathbb{R}^d$} are bounded by
    \[
    E_2 \le C_2 \frac{1+ b+ \log(t) + \tau}{e^{2b}t}, \quad E_3 \le \frac{C_3}{e^b(b+\log(t)+\tau)^{\max\{d-2, 1/2\}}},\quad
    E_4 \le \frac{C_{4,1}}{e^b(b+\log(t)+\tau)^{d-1}}+\frac{C_{4,2}}{e^{2b}t}.
    \]
    Here, $C_2$, $C_3$, $C_{4,1}$, and $C_{4,2}$ are constants in $(0,\infty)$ depending only on $d$ and $W$ as well as the constants $\alpha,\beta,\gamma$ in Theorem \ref{thm:PoisConvAbstract}. Possible choices are
    \[
    C_2 := \frac{2^{d+1} \kappa_d \alpha^d}{\gamma}\vol(W),\quad C_3 := \begin{cases}
        \frac{4 d \kappa_d^2 M_{d-1}\gamma^{d-2}}{\beta^d(d-2)}\vol(W), & d \ge 3,\\
        {\frac{16\sqrt{2} \pi^{5/2}}{\beta^2\sqrt{\gamma}}\vol(W),} & d = 2,
    \end{cases}\quad
    C_{4,1} := \frac{2^{d+4} d \kappa_d M_{d-1}\gamma^{d-1}}{\beta^d} \vol(W), 
    \]
and $C_{4,2}:=16\vol(W)$, where $M_{d-1} := (2(d-1)/e)^{d-1}$.
\end{lemma}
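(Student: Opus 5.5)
We bound $E_2$, $E_3$ and $E_4$ separately. Throughout we use the substitution $z=(x+y)/2$, $u=x-y$, polar coordinates in $u$, and the identity $t\gamma b_t^d=\log(t)+\tau+b$, which gives $e^{-t\gamma b_t^d}=2\gamma e^{-b}/(\kappa_d t)$. By the Mecke formula,
$$
\mathbb{E}[g_{b,t}(x,y,\eta_t+\delta_x+\delta_y)]=e^{-\gamma t\|x-y\|^d}\,\mathbbm{1}\{(x+y)/2\in W,\ \|x-y\|\ge b_t\}.
$$

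\textbf{The term $E_2$.} By (ii), $S_{(x,y)}\subseteq B^d(\frac{x+y}{2},\alpha\|x-y\|)$; this uses $\alpha>\frac12$, so that $x,y$ lie in the ball as well. Hence $S_{(x,y)}\cap S_{(u,v)}\neq\varnothing$ forces $\|z-z'\|\le\alpha(\|x-y\|+\|u-v\|)\le 2\alpha\max\{r,r'\}$, where $r=\|x-y\|$, $r'=\|u-v\|$ and $z,z'$ are the two midpoints. We keep $z\in W$ and integrate $z'$ over a ball of volume $\kappa_d(2\alpha)^d\max\{r,r'\}^d\le\kappa_d(2\alpha)^d(r^d+r'^d)$. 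The radial integrals are of the form
$$
t^2 d\kappa_d\int_{b_t}^\infty r^{d-1}e^{-\gamma t r^d}\,\dint r=\frac{\kappa_d t}{\gamma}e^{-\gamma t b_t^d},
$$
together with the analogous integral carrying an extra factor $r^d$, which equals
$$
\frac{\kappa_d}{\gamma^2}\,(1+\log t+\tau+b)\,e^{-\gamma t b_t^d}.
$$
Multiplying the two factors $e^{-\gamma t b_t^d}\sim e^{-b}/t$ produces the bound $C_2(1+b+\log t+\tau)/(e^{2b}t)$.

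\textbf{The term $E_3$.} By symmetry, assume $r=\|x-y\|\ge r'=\|u-v\|$. The expectation is at most $\exp(-t\,\vol(S(x,y)\cup S(u,v)))$, and by (i) and (iii) this is at most
$$
e^{-\gamma t r^d}\,e^{-\beta t\|z-z'\|r^{d-1}},
$$
since $\vol(S(x,y)\cup S(u,v))=\vol(S(u,v))+\vol(S(x,y)\setminus S(u,v))$ and we write this symmetrically with the larger set. We then integrate over $z'$ in polar coordinates; the integral $\int_{\RR^d}e^{-\beta t r^{d-1}\|w\|}\,\dint w$ is of order $(\beta t r^{d-1})^{-d}$. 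The integral over $u'$ with $\|u'\|\le r$ contributes $\kappa_d r^d$. This leaves
$$
t^4\int_{b_t}^\infty r^{d-1}e^{-\gamma t r^d}\,r^d\,(t r^{d-1})^{-d}\,\dint r\sim t^{4-d}\int_{b_t}^\infty r^{2d-1-d(d-1)}e^{-\gamma t r^d}\,\dint r.
$$
Substituting $s=\gamma t r^d$ turns this into an incomplete Gamma integral of the form $\int_{s_0}^\infty s^{1-(d-1)}e^{-s}\,\dint s$ with $s_0=\log t+\tau+b$, up to constants. For $d\ge3$ this is at most $s_0^{-(d-2)}e^{-s_0}\sim s_0^{-(d-2)}e^{-b}/t$; the powers of $t$ cancel, giving the $(\log t)^{-(d-2)}$ rate. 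The factor $M_{d-1}$ indicates that the authors instead bound $x^{d-1}e^{-x}$ by its maximum. For $d=2$ the exponent degenerates, and the $z'$-integral has to be handled more carefully, for example by using $\min\{1,\cdot\}$, i.e.\ both the trivial bound $e^{-\gamma t r^2}$ and the (iii) bound, and optimising via Cauchy--Schwarz. That yields the $s_0^{-1/2}$ rate.

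\textbf{The term $E_4$.} Here $|I|=1$: the two pairs $(x,y)$ and $(x,w)$ share a point. Both regions must be empty, and since $x\in S_{(x,y)}\cap S_{(x,w)}$ they intersect, so (iii) applies again. Either $y\in S(x,w)$ or $w\in S(x,y)$ kills the indicator; otherwise we obtain $e^{-\gamma t r^d-\beta t\|z-z'\|r^{d-1}}$. The integral over $w$ is now only $d$-dimensional, with one factor of $t$ fewer. The case in which both edges have length $\ge b_t$ but are nearly equal in size gives the same computation with exponent $d-1$, hence $C_{4,1}e^{-b}s_0^{-(d-1)}$. A crude complementary bound using independence-type terms gives $C_{4,2}/(e^{2b}t)$.

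\textbf{Main obstacle.} The main difficulty is the $d=2$ case of $E_3$ and keeping the constants explicit; everything else is a routine radial and Gamma-integral computation.
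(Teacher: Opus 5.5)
\textbf{There is a genuine gap in your bound for $E_3$: the key volume inequality is false.} Put $r=\|x-y\|\ge r'=\|u-v\|$. The decomposition $\vol(S(x,y)\cup S(u,v))=\vol(S(u,v))+\vol(S(x,y)\setminus S(u,v))$, together with (i) and (iii), gives $\gamma r'^d+\beta\|z-z'\|r^{d-1}$. The \emph{smaller} length sits in the first term. Writing $\gamma r^d$ there instead would need a lower bound on $\vol(S(u,v)\setminus S(x,y))$. Assumption (iii) gives no such bound, and none holds in general. Indeed, $\vol(S(x,y)\cup S(u,v))\le\gamma(r^d+r'^d)$ always, so your inequality fails whenever $\gamma r'^d<\beta\|z-z'\|r^{d-1}$. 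An example is a tiny Gabriel ball $S(u,v)$ meeting $S(x,y)$ near its boundary, so that $\|z-z'\|\approx r/2$; this configuration is allowed by the constraints in $E_3$.

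Your whole computation depends on this error. You discard all decay in $r'$ (integrating $u'$ freely over $\{\|u'\|\le r\}$) and use the full decay $e^{-\gamma t r^d}$ in the larger length. With the correct bound, the exponential decay is only in $r'$. The larger length is controlled only by the polynomial factor $r^{d-1}(\beta t r^{d-1})^{-d}$ left after the midpoint integration. Integrating this over $r\in[r',\infty)$ gives $\int_{r'}^\infty r^{-(d-1)^2}\,\dint r=r'^{-d(d-2)}/(d(d-2))$, which is finite only for $d\ge 3$. This is the paper's route. It, and not an incomplete Gamma integral, is where the exponent $d-2$ and the breakdown at $d=2$ come from.

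Averaging with the trivial bound $\vol(S(x,y)\cup S(u,v))\ge\gamma r^d$ does not rescue your scheme while you still discard the $r'$-decay. You then only have $e^{-\gamma t r^d/2}$, and your computation produces a factor $t\,e^{-(\log t+\tau+b)/2}\asymp\sqrt t$.

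\textbf{The case $d=2$ is not actually handled.} ``$\min\{1,\cdot\}$ and Cauchy--Schwarz'' is not an argument. Within your own framework it is also redundant, since your (incorrect) bound already contains the trivial term $\gamma r^d$. What works is a convex combination of the trivial bound and the (iii) bound:
$\vol(S(x,y)\cup S(u,v))\ge\gamma r'^2+\frac{\beta}{2}\|z-z'\|r+\frac{\gamma}{2}(r^2-r'^2)$.
This keeps the full decay in $r'$ and adds Gaussian decay in $r$. Then $1/r\le 1/b_t$ and $\int_{r'}^\infty e^{-t\gamma(r^2-r'^2)/2}\,\dint r\le\sqrt{\pi/(2t\gamma)}$ give the rate $(\log t)^{-1/2}$.

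\textbf{In $E_4$ the same wrong exponent $\gamma t r^d$ reappears, but there it is easily fixed.} You may put the smaller length in both terms, since $r^{d-1}\ge r'^{d-1}$. After fixing $x$ and $x-w$, the remaining variable is exactly the midpoint offset $(y-w)/2$, so the computation goes through.

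Your justification for invoking (iii) is wrong, however. The fact $x\in S_{(x,y)}\cap S_{(x,w)}$ concerns the augmented sets, while (iii) requires $S(x,y)\cap S(x,w)\ne\varnothing$. The case of disjoint regions must be treated separately; there the volumes simply add. That case is exactly the source of the term $C_{4,2}/(e^{2b}t)$. It is not a split according to whether the two lengths are nearly equal.

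\textbf{Your $E_2$ argument matches the paper's,} except that bounding via $\max\{r,r'\}$ doubles the constant. Using $(r+r')^d\le 2^{d-1}(r^d+r'^d)$ gives the stated $C_2$.
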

\begin{proof}
We deal with the terms $E_2$, $E_3$, and $E_4$ separately.

\paragraph{Bound for $E_2$.} 
By assumption (i) of Theorem \ref{thm:PoisConvAbstract} we have
\begin{align*}
    \mathbb{E}[g_{b,t}(x,y,\eta_t+\delta_x+\delta_y)] &=\mathbbm{1}\Big\{{x+y\over 2}\in W,\lVert x - y\rVert \ge b_t\Big\}\,\PP(\eta_t(S(x,y))=0)\\
    &=\mathbbm{1}\Big\{{x+y\over 2}\in W,\lVert x - y\rVert \ge b_t\Big\}\,e^{-\gamma t \lVert x - y \rVert^d},
\end{align*}
and similarly for $\mathbb{E}[g_{b,t}(u,v,\eta_t+\delta_u+\delta_v)]$. For $E_2$ it follows that
\begin{align*}
    E_2 &= \frac{t^4}{2} \int_{(\RR^d)^2} \int_{(\RR^d)^2} \mathbbm{1}\{ {(S(x,y)\cup\{x,y\}) \cap (S(u,v)\cup\{u,v\})} \ne \varnothing \} \,\mathbb{E}[g_{b,t}(x,y,\eta_t+\delta_x+\delta_y)]\\
    &\hspace{3cm}\times \mathbb{E}[g_{b,t}(u,v,\eta_t+\delta_u+\delta_v)] \, \dint(x,y) \, \dint(u,v)\\
    &= \frac{t^4}{2} \int_{(\RR^d)^4} \mathbbm{1}\Big\{ {(S(x,y)\cup\{x,y\}) \cap (S(u,v)\cup\{u,v\})} \ne \varnothing, \frac{x+y}{2} \in W, \frac{u+v}{2} \in W, \\
    &\hspace{2.5cm} \lVert x - y\rVert \ge b_t,\lVert u - v\rVert \ge b_t\Big\} \, e^{-\gamma t \lVert x - y \rVert^d} e^{-\gamma t \lVert u - v \rVert^d} \,\dint(x,y,u,v).
\end{align*}
By assumption (ii) of Theorem \ref{thm:PoisConvAbstract} and since $\alpha > 1/2$, ${(S(x,y)\cup\{x,y\}) \cap (S(u,v)\cup\{u,v\})} \ne \varnothing$ implies that $B^d(\frac{x+y}{2}, \alpha \lVert x - y\rVert) \cap B^d(\frac{u+v}{2}, \alpha \lVert u - v\rVert) \ne \varnothing$. Therefore, the last integral may be estimated from above by
\begin{align*}
    & \frac{t^4}{2} \int_{(\RR^d)^4} \mathbbm{1}\Big\{ B^d\Big(\frac{x+y}{2}, \alpha \lVert x - y\rVert\Big) \cap B^d\Big(\frac{u+v}{2}, \alpha \lVert u - v\rVert\Big) \ne \varnothing, \frac{x+y}{2} \in W, \frac{u+v}{2} \in W,\\
    &\hspace{2.0cm} \lVert x - y\rVert \ge b_t, \lVert u - v\rVert \ge b_t \Big\} \,e^{- \gamma t \lVert x - y \rVert^d} e^{-\gamma t \lVert u - v \rVert^d} \,\dint(x,y,u,v)\\
    &= \frac{t^4}{2} \int_{(\RR^d)^4} \mathbbm{1}\{ B^d(z, \alpha \lVert z'\rVert) \cap B^d(w, \alpha \lVert w'\rVert) \ne \varnothing, z \in W, w \in W,  \lVert z'\rVert \ge b_t,\lVert w'\rVert \ge b_t \}\\
    &\hspace{1.5cm} \times e^{-\gamma t \lVert z' \rVert^d} e^{-\gamma t \lVert w' \rVert^d} \,\dint(z,z',w,w'),
\end{align*}
where we applied the substitution $z=(x+y)/2$, $w=(u+v)/2$, $z'=x-y$, and $w'=u-v$. {From} spherical coordinates, it follows that
\begin{align*}
    E_2 &\leq \frac{t^4}{2} d^2 \kappa_d^2 \int_W \int_W \int_0^\infty \int_0^\infty \mathbbm{1}\{ B^d(z, \alpha r) \cap B^d(w, \alpha s) \ne \varnothing, r\geq b_t,s\geq b_t \}\\
    &\hspace{3cm} \times e^{-\gamma t r^d} e^{-\gamma t s^d} (rs)^{d-1} \,\dint r \, \dint s \, \dint z \, \dint w \allowdisplaybreaks\\
    & \leq \frac{t^4}{2} d^2 \kappa_d^2 \int_W \int_{b_t}^\infty \int_{b_t}^\infty e^{-\gamma t (r^d +s^d)} (rs)^{d-1} \Big(\int_{\RR^d} \mathbbm{1} \{w \in B^d(z, \alpha (r+s)) \} \, \dint w\Big)\,\dint r\, \dint s \, \dint z\allowdisplaybreaks\\
    &= \frac{t^4}{2} d^2 \kappa_d^2 \vol(W) \int_{b_t}^\infty \int_{b_t}^\infty e^{-\gamma t (r^d +s^d)} (rs)^{d-1} \kappa_d\alpha^d(r+s)^d \,\dint r \, \dint s\allowdisplaybreaks\\
    &\le \frac{t^4}{2} d^2 \kappa_d^3 \vol(W) \int_{b_t}^\infty \int_{b_t}^\infty e^{-\gamma t (r^d +s^d)} (rs)^{d-1} 2^{d-1}\alpha^d(r^d+s^d) \,\dint r \, \dint s\allowdisplaybreaks\\
    & {=} 2^{d-1} t^4 d^2 \kappa_d^3 \alpha^d \vol(W) \int_{b_t}^\infty \int_{b_t}^\infty e^{-\gamma t (r^d +s^d)} r^{2d-1} s^{d-1} \,\dint r \, \dint s,
\end{align*}
where we used symmetry in the last step. Simple calculations with $\tilde{b}_t := \gamma t b_t^d = \log(t)+b+\tau$, where the equality holds due to $\log(t)+b+\tau>0$, now yield
\begin{align*}
\int_{b_t}^\infty e^{-\gamma t r^d} r^{2d-1} \, \dint r & =  \frac{1}{d(\gamma t)^2} \int_{\tilde{b}_t}^\infty ue^{-u} \, \dint u =  \frac{(1+\tilde{b}_t) e^{-\tilde{b}_t}}{d(\gamma t)^2} = \frac{(1+  b+ \log(t) + \tau) e^{-\log(t)-b-\tau}}{d(\gamma t)^2} \\
&  = \frac{2(1+  b+ \log(t) + \tau) e^{-b}}{d\kappa_d \gamma t^3}
\end{align*}
and
\begin{equation}\label{eqn:integral_b_t}
 \int_{b_t}^\infty e^{-\gamma t s^d} s^{d-1}  \, \dint s = \frac{1}{d\gamma t}  \int_{\tilde{b}_t}^\infty e^{-v} \, \dint v = \frac{e^{-\tilde{b}_t}}{d\gamma t} = \frac{2e^{-b}}{d\kappa_d t^2}.
\end{equation}
Altogether, this leads to the estimate
\[
E_2 \le \frac{2^{d+1} \kappa_d \alpha^d\vol(W)}{\gamma } \frac{e^{-2b}(1+ b+ \log(t) + \tau)}{t} = C_2 \frac{1+ b+ \log(t) + \tau}{e^{2b}t}.
\]

\paragraph{Bound for $E_3$.} 
Turning to $E_3$, we have
    \begin{align}\allowdisplaybreaks
    E_3 &= \frac{t^4}{2} \int_{(\RR^d)^2} \int_{(\RR^d)^2} \mathbbm{1}\{ {(S(x,y)\cup\{x,y\}) \cap (S(u,v)\cup\{u,v\})} \ne \varnothing \} \notag\\
    &\hspace{1.5cm} \times\mathbb{E}[g_{b,t}(x,y,\eta_t+\delta_x+\delta_y+\delta_u+\delta_v) g_{b,t}(u,v,\eta_t+\delta_x+\delta_y+\delta_u+\delta_v)] \,\dint(x,y) \, \dint(u,v) \notag \\
    &= \frac{t^4}{2} \int_{(\RR^d)^4} \mathbbm{1}\Big\{{(S(x,y)\cup\{x,y\}) \cap (S(u,v)\cup\{u,v\})} \ne \varnothing, u,v \notin S(x,y), x,y \notin S(u,v), \notag \\
    &\hspace{2.5cm} \frac{x+y}{2} \in W, \frac{u+v}{2} \in W,\lVert x-y \rVert \ge b_t, \lVert u-v \rVert\ge b_t\Big\}\, e^{-t\vol(S(x,y) \cup S(u,v))} \,\dint(x,y,u,v) \notag \\
    &= t^4 \int_{(\RR^d)^4} \mathbbm{1}\Big\{S(x,y) \cap S(u,v) \ne \varnothing, u,v \notin S(x,y), x,y \notin S(u,v), \frac{x+y}{2} \in W, \frac{u+v}{2} \in W, \notag \\
    &\hspace{2.5cm}\lVert x-y \rVert \ge \lVert u-v \rVert\ge b_t\Big\}\, e^{-t\vol(S(x,y) \cup S(u,v))} \,\dint(x,y,u,v), \label{eqn:E_3_intermediate}
		\end{align}
where we used symmetry in the last step. Next we write
\begin{align*}
\vol(S(x,y) \cup S(u,v)) = \vol(S(u,v)) + \vol(S(x,y) \setminus S(u,v)) .
\end{align*}
By assumptions (i) and (iii) of Theorem \ref{thm:PoisConvAbstract} we thus have
\begin{equation}\label{eqn:lower_bound_vol}
    \vol(S(x,y) \cup S(u,v) \geq \gamma\|u-v\|^d + \beta\bigg\|\frac{x+y}{2}-\frac{u+v}{2}\bigg\| \|x-y\|^{d-1}
\end{equation}
{if} $\|x-y\|\geq\|u-v\|$ and $S(x,y) \cap S(u,v) \ne \varnothing$.

In the following, we consider the cases $d\ge3$ and $d=2$ separately. For $d\ge3$ it follows from \eqref{eqn:E_3_intermediate} and \eqref{eqn:lower_bound_vol} that
\begin{align*}\allowdisplaybreaks
E_3 & \leq t^4 \int_{(\RR^d)^4} \mathbbm{1}\Big\{\frac{u+v}{2} \in W,\lVert x-y \rVert \ge \lVert u-v \rVert\ge b_t\Big\}\\
    &\hspace{2.5cm} \times \exp\bigg(-t\gamma\|u-v\|^d - t \beta\bigg\|\frac{x+y}{2}-\frac{u+v}{2}\bigg\| \|x-y\|^{d-1}\bigg) \,\dint(x,y,u,v) \\
		& = t^4 \int_{(\RR^d)^4} \mathbbm{1}\Big\{w \in W,\lVert z' \rVert \ge \lVert w' \rVert\ge b_t\Big\} e^{-t\gamma\|w'\|^d - t \beta\|z\| \|z'\|^{d-1}} \,\dint(z,z',w,w'),
\end{align*}
where we have set $z= (x+y)/2-(u+v)/2$, $w=(u+v)/2$, $z' = x-y$, {and} $w' = u-v$ in the second step. This inequality can be further rewritten as
\begin{align*}
E_3 & \leq t^4 \vol(W) \int_{(\RR^d)^3} \mathbbm{1}\Big\{\lVert z' \rVert \ge \lVert w' \rVert\ge b_t\Big\} e^{-t\gamma\|w'\|^d - t \beta\|z\| \|z'\|^{d-1}} \,\dint(z,z',w') \\
&= t^4\vol(W) (d\kappa_d)^3 \int_{[0, \infty)^3} \mathbbm{1}\{r \ge s\geq b_t\} e^{-t\gamma s^d - t \beta v r^{d-1}} (rsv)^{d-1} \,\dint(r,s,v)\\
    &= t^4\vol(W) (d\kappa_d)^3 \int_{[0, \infty)^3} \mathbbm{1}\{r \ge s\ge b_t\} e^{-t\gamma s^d} (rs)^{d-1}\,e^{-t\beta r^{d-1}v} (t\beta r^{d-1} v)^{d-1} \frac{\dint(r,s,v)}{(t\beta r^{d-1})^{d-1}} .
\end{align*}
Writing $M_{d-1} := \max_{p \ge 0} p^{d-1}e^{-p/2} = (2(d-1)/e)^{d-1}$, we have
\[
e^{-t\beta r^{d-1}v} (t\beta r^{d-1} v)^{d-1} \le M_{d-1}e^{-t\beta r^{d-1}v/2}
\]
so that we may proceed by bounding the last integral from above by
\begin{align*}
&  t^4\vol(W) (d\kappa_d)^3 M_{d-1}\int_{[0, \infty)^3} \mathbbm{1}\{ s \ge b_t, r \ge s\} e^{-t\gamma s^d} (rs)^{d-1} e^{-t\beta r^{d-1}v/2}\,\frac{\dint(r,s,v)}{(t\beta r^{d-1})^{d-1}}  \\
& = \frac{2 M_{d-1}\vol(W)(d\kappa_d)^3t^3}{\beta} \int_{[0, \infty)^2} \mathbbm{1}\{s \ge b_t, r \ge s\} e^{-t\gamma s^d} s^{d-1} \frac{1}{(t\beta r^{d-1})^{d-1}} \, \dint(r,s)\allowdisplaybreaks\\
    &= \frac{2M_{d-1}\vol(W)(d\kappa_d)^3t^{4-d}}{\beta^d ((d-1)^2-1)} \int_{b_t}^\infty e^{-t\gamma s^d} s^{d-1} \frac{\dint s}{s^{(d-1)(d-1)-1}} \\
    &\le \frac{2M_{d-1}\vol(W)(d\kappa_d)^3t^2}{\beta^d d(d-2)t^{d-2}b_t^{d(d-2)}} \int_{b_t}^\infty e^{-t\gamma s^d}s^{d-1} \,\dint s.
\end{align*}
Observe that the integrals on the left-hand side are divergent for $d=2$. Note that as already seen in \eqref{eqn:integral_b_t}, the last integral equals $2e^{-b}/(d\kappa_dt^2)$. Altogether, we obtain the bound
\[
E_3 \le \frac{4M_{d-1}(d\kappa_d)^2\vol(W)e^{-b}}{\beta^d d(d-2)((b+\log(t)+\tau)/\gamma)^{d-2}} = \frac{C_3}{e^b(b+\log(t)+\tau)^{d-2}}
\]
for $d\ge3$.

To address the case of $d=2$, we combine the trivial bound 
$$
\vol(S(x,y) \cup S(u,v))\ge \vol(S(x,y))=\gamma \|x-y\|^2,
$$
which follows from assumption (i) of Theorem \ref{thm:PoisConvAbstract}, with \eqref{eqn:lower_bound_vol} to obtain
\begin{align*}
\vol(S(x,y)\cup S(u,v)) & \geq \frac{\gamma\|x-y\|^2}{2} + \frac{1}{2} \Big(\beta \Big\lVert \frac{x+y}{2} - \frac{u+v}{2} \Big\rVert \lVert x-y \rVert + \gamma \lVert u-v \rVert^2 \Big) \\
& = \gamma\|u-v\|^2+ \frac{\beta}{2} \Big\lVert \frac{x+y}{2} - \frac{u+v}{2} \Big\rVert \lVert x-y \rVert + \frac{\gamma}{2} \big(\lVert x-y \rVert^2 - {\|u-v\|^2}  \big)
\end{align*}
if $\|x-y\|\ge\|u-v\|$ and $S(x,y)\cap S(u,v)\neq\varnothing$. Together with \eqref{eqn:E_3_intermediate} and the same arguments as for $d\ge3$, we derive
\begin{align*}
  E_3 & \le t^4\vol(W) (2\kappa_2)^3 \int_{[0, \infty)^3} \mathbbm{1}\{ r \ge s\ge b_t\} e^{-t\gamma s^2 - t \beta vr/2 - t\gamma (r^2-s^2)/2} rsv \,\dint(r,s,v)\\
		& = \frac{4t^2\vol(W) (2\kappa_2)^3}{\beta^2} \int_{[0, \infty)^2} \mathbbm{1}\{ r \ge s\ge b_t\} e^{-t\gamma s^2 - t\gamma (r^2-s^2)/2} \frac{s}r{} \,\dint(r,s)\\
		& \leq \frac{32t^2\vol(W) \kappa_2^3}{\beta^2 b_t} \int_{b_t}^\infty e^{-t\gamma s^2} {s} \int_s^\infty e^{- t\gamma (r^2-s^2)/2} \, \dint r  \, \dint s.
\end{align*}
Now, we note that by
{$$
\int_s^\infty e^{- t\gamma (r^2-s^2)/2} \, \dint r = \int_0^\infty e^{- t\gamma ((u+s)^2-s^2)/2} \, \dint u \leq \int_0^\infty e^{- t\gamma u^2/2} \, \dint u = \frac{\sqrt{2\pi}}{2} \frac{1}{\sqrt{t\gamma}}
$$}
for any $s \ge 0$ and by \eqref{eqn:integral_b_t} the integral is bounded by {$\sqrt{2\pi}e^{-b}/(2\kappa_2\sqrt{\gamma}t^{5/2})$}. This yields
\[
E_3 \le \frac{16 \sqrt{2\pi} \kappa_2^2  \vol(W) e^{-b}}{\beta^2 \sqrt{\gamma}\sqrt{t} b_t} =\frac{16 \sqrt{2\pi} \kappa_2^2 \vol(W) e^{-b}}{\beta^2\sqrt{\gamma}(b+\log(t)+\tau)^{1/2}} = \frac{C_3}{e^b(b+\log(t)+\tau)^{1/2}}
\]
for $d=2$.

\paragraph{Bound for $E_4$.} Finally, we bound the quantity $E_4$. We have
\begin{align*}
    E_4 &= 2t^3 \int_{(\RR^d)^3} \mathbb{E} [g_b(x,y,\eta_t+\delta_x+\delta_y+\delta_z)g_b(x,z,\eta_t+\delta_x+\delta_y+\delta_z)] \,\dint(x,y,z)\\
    &= 2 t^3 \int_{(\RR^d)^3} \mathbbm{1} \Big\{z \notin S(x,y),y \notin S(x,z), \frac{x+y}{2} \in W, \frac{x+z}{2} \in W, \lVert x-y \rVert \ge b_t,\lVert x-z \rVert \ge b_t\Big\}\\
    &\hspace{2cm}\times \mathbb{P}(\eta_t(S(x,y) \cup S(x,z))=0)\, \dint(x,y,z)\\
    &\le 4t^3 \int_{(\RR^d)^3} \mathbbm{1} \Big\{\frac{x+y}{2} \in W, \frac{x+z}{2} \in W, \lVert x - y \rVert \ge \lVert x - z \rVert\ge b_t \Big\}
		e^{-t \vol(S(x,y) \cup S(x,z))}\, \dint(x,y,z).
\end{align*}
Next we use assumptions (i) and (iii) of Theorem \ref{thm:PoisConvAbstract} to see that
\begin{align*}
    \vol(S(x,y) \cup S(x,z)) &= \vol(S(x,z)) + \vol(S(x,y) \setminus S(x,z))\ge \gamma \lVert x-z \rVert^d + \beta \Big\lVert \frac{y-z}{2} \Big\rVert \lVert x - y \rVert^{d-1},
\end{align*}
whenever $S(x,y) \cap S(x,z)\neq\varnothing$ and $\|x-y\|\geq\|x-z\|$. If the intersection is empty, we have
$$
\vol(S(x,y) \cup S(x,z)) = \vol(S(x,y)) + \vol(S(x,z)) = \gamma \|x-y\|^d + \gamma \|x-z\|^d.
$$
This implies 
\begin{align*}
E_4 & \leq  4t^3 \int_{(\RR^d)^3} \mathbbm{1} \Big\{\frac{x+y}{2} \in W, \frac{x+z}{2} \in W, \lVert x - y \rVert \ge \lVert x - z \rVert\ge b_t \Big\}\\
    &\hspace{2cm}\times e^{-t (\gamma \lVert x-z \rVert^d + \beta \| \frac{y-z}{2} \| \| x - y \|^{d-1})}\, \dint(x,y,z) \\
		& \quad +  4t^3 \int_{(\RR^d)^3} \mathbbm{1} \Big\{\frac{x+y}{2} \in W, \frac{x+z}{2} \in W, \lVert x - y \rVert \ge \lVert x - z \rVert\ge b_t \Big\}\\
    &\hspace{2cm}\times e^{-t \gamma (\| x-y \|^d + \|x-z\|^d)}\, \dint(x,y,z)\\
		&=: E_4' + E_4''.
\end{align*}

Putting $u = x-y$ and $v = x-z$ yields
\begin{align*}
 E_4' & =  4t^3 \int_{(\RR^d)^3} \mathbbm{1} \Big\{ x - \frac{u}{2} \in W, x - \frac{v}{2} \in W, \lVert u \rVert \ge\lVert v \rVert\ge b_t\Big\} e^{-t (\gamma \lVert v \rVert^d + \beta \lVert (v-u)/2 \rVert \lVert u \rVert^{d-1})} \,\dint(x,u,v)\\
&\le 4t^3 \vol(W) \int_{(\RR^d)^2} \mathbbm{1} \{ \lVert u \rVert \ge \lVert v \rVert \ge b_t\}e^{-t (\gamma \lVert v \rVert^d + \beta \lVert (v-u)/2 \rVert \lVert u \rVert^{d-1})} \,\dint(u,v)\\
    &\le 4t^3 \vol(W) \int_{(\RR^d)^2} \mathbbm{1} \{ \lVert v \rVert \ge b_t\} e^{-t (\gamma\lVert v \rVert^d + \beta \lVert (v-u)/2 \rVert \lVert v \rVert^{d-1})} \,\dint(u,v)\\
     &   = 4t^3 \vol(W) \int_{(\RR^d)^2} \mathbbm{1} \{ \lVert v \rVert\ge b_t\} e^{-t (\gamma\lVert v \rVert^d + \beta \lVert w/2 \rVert \lVert v \rVert^{d-1})} \,\dint(v,w)\\
    &= 4t^3 \vol(W) (d\kappa_d)^2 \int_{[0,\infty)^2} \mathbbm{1} \{s \ge b_t\} e^{-t (\gamma s^d + \beta r s^{d-1}/2)} (rs)^{d-1} \,\dint(r,s).
\end{align*}
Arguing as above, we have
\[
e^{-t \beta r s^{d-1}/2} r^{d-1} \le M_{d-1} e^{-t \beta r s^{d-1}/4} \frac{2^{d-1}}{(t\beta s^{d-1})^{d-1}}
\]
so that
\begin{align*}
  E_4'  &\le 4t^3 \vol(W) M_{d-1} (d\kappa_d)^2 \int_{[0,\infty)^2} \mathbbm{1} \{s\ge b_t\}e^{-t\gamma s^d} s^{d-1} e^{-t \beta r s^{d-1}/4} \frac{2^{d-1}}{(t\beta s^{d-1})^{d-1}}\,\dint(r,s)\\
    &= 4t^3 \vol(W) M_{d-1} (d\kappa_d)^2 \int_0^\infty \mathbbm{1} \{s \ge b_t\} e^{-t\gamma s^d} s^{d-1} \frac{2^{d+1}}{(t\beta s^{d-1})^d}\,\dint s\\
    &{\le \frac{2^{d+3}t^{3-d} \vol(W) M_{d-1} (d\kappa_d)^2 }{\beta^db_t^{d(d-1)}} \int_{b_t}^\infty e^{-t\gamma s^d} s^{d-1}\,\dint s.} 
\end{align*}
The last integral has already been computed in \eqref{eqn:integral_b_t} and equals $\frac{2e^{-b}}{t^2d\kappa_d}$. Plugging this in yields
\[
E_4' \le \frac{2^{d+3}t^2 \vol(W) M_{d-1}(d\kappa_d)^2}{\beta^d(t b_t^d)^{(d-1)}} \cdot \frac{2e^{-b}}{t^2d\kappa_d} = \frac{2^{d+4} M_{d-1}d\kappa_d \vol(W)e^{-b}}{{\beta^d((b + \log(t) + \tau)/\gamma)^{d-1}}} = \frac{{C_{4,1}}}{e^b(b+\log(t)+\tau)^{d-1}}.
\]
Using the same substitution as above, spherical coordinates, and \eqref{eqn:integral_b_t} leads to
\begin{align*}
E_4'' & \leq 4t^3 \vol(W) \int_{(\RR^d)^2} \mathbbm{1} \{ \|u\| \geq \lVert v \rVert \ge b_t\} {e^{-t \gamma(\|u\|^d +\| v \|^d)}} \,\dint(u,v) \\
& \leq 4t^3 \vol(W) \bigg( d\kappa_d \int_{b_t}^\infty e^{\gamma t s^d} s^{d-1} \, \dint s \bigg)^2 = \frac{4t^3 \vol(W) (d\kappa_d)^2 (2e^{-b})^2}{(d\kappa_dt^2)^2} = \frac{16\vol(W) e^{-2b}}{t}{=\frac{C_{4,2}}{e^{2b} t}}.
\end{align*}
This completes the proof.
\end{proof}

\subsection{Proofs of Theorem \ref{thm:PoisConvAbstract} and Theorem \ref{thm:GumbelAbstract}}\label{subsec:AbstractProofs}

The proof of Theorem \ref{thm:PoisConvAbstract} now follows immediately by combining the bounds derived in the previous section with the Poisson process approximation result in Theorem \ref{thm:BSY}.

\begin{proof}[Proof of Theorem \ref{thm:PoisConvAbstract}]
{Let $t\geq 2$. We first assume that $t>\frac{2\gamma}{\kappa_d}e^{-b}$,} which implies $b>{\log(\frac{2\gamma}{t\kappa_d})=-\log(t)}-\tau$. {So, plugging in
$$
\mathbf{M}_t|_{W \times [b, \infty)\times\LL^d} = \vol\otimes\mu\otimes\nu|_{W \times [b, \infty)\times\LL^d},
$$
which follows from Lemma \ref{lem:intmeas}, and the estimates provided in Lemma \ref{lem:estimates}} into Theorem \ref{thm:Poisson-Approximation} yields
\begin{align*}
\mathrm{d}_{\mathrm{KR}} (\xi_t|_{W \times [b, \infty)\times\LL^d}, \zeta|_{W \times [b, \infty)\times\LL^d}) &\le  C_2 \frac{1+ b+ \log(t) + \tau}{e^{2b}t} + \frac{C_3}{e^b(b+\log(t)+\tau)^{\max\{d-2, 1/2\}}}\\
 &\quad+ \frac{C_{4,1}}{e^b(b+\log(t)+\tau)^{d-1}} + \frac{C_{4,2}}{e^{2b}t}
\end{align*}
with the constants $C_2$, $C_3$, $C_{4,1}$, and $C_{4,2}$ from Lemma \ref{lem:estimates}. The result of Theorem \ref{thm:PoisConvAbstract} thus follows for {$t> \max\big\{2,\frac{2\gamma}{\kappa_d}e^{-b}\big\}$.} In case that {$t\in\big[2,\frac{2\gamma}{\kappa_d}e^{-b}\big]$,} which yields $b\leq -{\log(t)}-\tau$, the definition of the Kantorovich--Rubinstein distance and Lemma \ref{lem:intmeas} lead to
\begin{align*}
\mathrm{d}_{\mathrm{KR}} (\xi_t|_{W \times [b, \infty)\times\LL^d}, \zeta|_{W \times [b, \infty)\times\LL^d}) & \leq \mathbb{E}[\xi_t(W \times [b, \infty)\times\LL^d)] + \mathbb{E}[\zeta(W \times [b, \infty)\times\LL^d)] \\
& = \vol(W) \mu([-\log(t)-\tau,\infty)) + \vol(W) \mu([b,\infty)) \\
& \leq 2\vol(W) \mu([b,\infty)) = 2\vol(W) e^{-b}.
\end{align*}		
This allows to choose the constant in {\eqref{eqn:bound_dKR}} such that the statement is valid for all $t\geq 2$. {Since convergence in Kantorovich--Rubinstein distance implies convergence in distribution (see \cite[Proposition 2.1]{DST}), this yields $\xi_t|_{W \times [b, \infty)\times\LL^d} \overset{d}{\longrightarrow} \zeta|_{W \times [b, \infty)\times\LL^d}$ as $t\to\infty$. Thus, by \cite[Theorem 23.16]{Kallenberg}, one has $\xi_t\overset{d}{\longrightarrow} \zeta$ as $t\to\infty$, which completes the proof.}
\end{proof}

Similarly, Theorem \ref{thm:GumbelAbstract} follows from Lemma \ref{lem:intmeas}, Lemma \ref{lem:estimates}, and Theorem \ref{thm:Poisson-Approximation}.

\begin{proof}[Proof of Theorem \ref{thm:GumbelAbstract}]
We start by noting that, for any $s\in\RR$,
\begin{align*}
&\PP(\gamma tL_{t,W}^d-\log(t)-\tau\leq s) - \PP(G_W\leq s)=\PP(\xi_t(W\times{(s,\infty)}\times\LL^d)=0) - \PP(Y=0),
\end{align*}
where $Y$ is a Poisson distributed random variable with parameter $\vol(W) e^{-s}$. Thus, by Theorem \ref{thm:Poisson-Approximation}, we have
\begin{align*}
    &|\PP(\gamma tL_{t,W}^d-\log(t)-\tau\leq s) - \PP(G_W\leq s)|\\
    &\leq \min\bigg\{1,\frac{1}{\sqrt{\vol(W) e^{-s}}}\bigg\}|\EE[\xi_t(W\times{(s,\infty)}\times\LL^d)]- \vol(W)e^{-s}| \\
		& \quad+ \min\bigg\{\frac{1}{2},\frac{1}{2\vol(W) e^{-s}}\bigg\}(E_2+E_2+E_4)\\
    &=\min\bigg\{1,\frac{e^{s/2}}{\sqrt{\vol(W)}}\bigg\} |\EE[\xi_t(W\times{(s,\infty)}\times\LL^d)]-\vol(W)e^{-s}| + \min\bigg\{\frac{1}{2},\frac{e^{s}}{2\vol(W)}\bigg\} (E_2+E_2+E_4).
\end{align*}
For $s\geq -\log(t)/4-\tau$, we use Lemma \ref{lem:intmeas} and Lemma \ref{lem:estimates} to conclude that
$$
|\EE[\xi_t(W\times{(s,\infty)}\times\LL^d)]-\vol(W)e^{-s}| = 0
$$
and
\begin{align*}
& E_2 \le C'_2 \vol(W) \frac{1+ s+ \log(t) + \tau}{e^{2s}t}, \quad E_3 \le \frac{C'_3 \vol(W)}{e^s(s+\log(t)+\tau)^{\max\{d-2, 1/2\}}},\\
& E_4 \le \frac{C'_4 \vol(W)}{e^s(s+\log(t)+\tau)^{d-1}} +\frac{16\vol(W)}{e^{2s} t}
\end{align*}
with constants $C'_2,C'_3,C'_4\in(0,\infty)$ only depending on $d$, $\alpha$, $\beta$, and $\gamma$. Hence, {one obtains}
\begin{align*}
    \frac{e^s\,E_2}{\vol(W)} &\leq C'_2\frac{1+ s+ \log(t) + \tau}{e^{s}t} \leq C'_2\Big(\max_{u\geq 0}{u\over e^u}\Big){1\over t}+C'_2{\log(t)+1+\tau\over e^{-\log(t)/4-\tau}\,t}\leq C'_5\Big({1\over t}+{\log(t)\over t^{3/4}}\Big),\\
    \frac{e^s\,E_3}{\vol(W)} &\leq \frac{C'_3}{(s+\log(t)+\tau)^{\max\{d-2, 1/2\}}} \leq {C'_3\over({3\over 4}\log(t))^{\max\{d-2, 1/2\}}},\\
    \frac{e^s\,E_4}{\vol(W)} &\leq \frac{C'_4}{(s+\log(t)+\tau)^{d-1}} +\frac{16}{e^{s} t} \leq {C'_4\over({3\over 4}\log(t))^{d-1}} +\frac{16}{e^{-\tau} t^{3/4}}
\end{align*}
with a suitable constant $C'_5\in(0,\infty)$ depending on the same parameters as $C'_2$. It follows that
\begin{equation}\label{eq:GumbelProof1}
\sup_{s\geq -\log(t)/4-\tau}|\PP(\gamma tL_{t,W}^d-\log(t)-\tau\leq s) - \PP(G_W\leq s)| \leq {C'_6\over {\log^{\max\{d-2, 1/2\}}(t)}}
\end{equation}
for all $t\geq 2$, where $C'_6\in(0,\infty)$ is a constant only depending on $d$, $\alpha$, $\beta$, and $\gamma$.

To deal with the complementary case $s< -\log(t)/4-\tau$ note that
$$
\sup_{s< -\log(t)/4-\tau}\PP(G_W\leq s) = \PP(G_W\leq -\log(t)/4-\tau) = \exp(-\vol(W) e^{\log(t)/4+\tau}) = \exp(-\vol(W) e^\tau\,t^{1/4})
$$
and
\begin{align*}
&\sup_{s< -\log(t)/4-\tau} \PP(\gamma tL_{t,W}^d-\log(t)-\tau\leq s) = \PP\big(\gamma tL_{t,W}^d-\log(t)-\tau\leq -\log(t)/4-\tau\big)\\
&\leq \PP(G_W\leq -\log(t)/4-\tau) 
+ |\PP\big(\gamma tL_{t,W}^d-\log(t)-\tau\leq -\log(t)/4-\tau\big)-\PP(G_W\leq -\log(t)/4-\tau)|\\
&\leq \exp(-\vol(W)e^\tau\,t^{1/4}) + {C'_6\over\log^{\max\{d-2, 1/2\}}(t)}\\
&\leq {C'_7\over\log^{\max\{d-2, 1/2\}}(t)}
\end{align*}
with another constant $C'_7\in(0,\infty)$ depending on $\vol(W)$ and the same parameters as $C'_6$. Combining this with \eqref{eq:GumbelProof1} completes the proof.
\end{proof}

\section{Proofs of Theorem \ref{th:PoisConv} and Theorem \ref{th:gumb}}\label{sec:Proofs}

To prove both Theorem \ref{th:PoisConv} and Theorem \ref{th:gumb}, we need to verify that assumptions (i)--(iv) of Theorem \ref{thm:PoisConvAbstract} are satisfied. The results then follow from Theorem \ref{thm:PoisConvAbstract} and Theorem \ref{thm:GumbelAbstract}. The sets $(S(x,y))_{x,y\in\mathbb{R}^d}$ used to construct the empty region graphs in Examples \ref{ex:Gabriel}--\ref{ex:ConvBody} fall into two classes:
\begin{itemize}
        \item[(S1)] $S(x,y)$ is of the form 
        \begin{equation}\label{eq:SetswithoutRot}
        S(x,y) = \frac{x+y}{2} + \lVert x-y \rVert K
        \end{equation}
        for a compact star-shaped set $K\subset\RR^d$. This is the case for Example \ref{ex:ConvBody}.

        \item[(S2)] $S(x,y)$ is of the form
        $$
        S(x,y) = \frac{x+y}{2} + \lVert x-y \rVert \varrho_{x,y} K
        $$
        with a compact set $K\subset\RR^d$ (satisfying a number of technical assumptions which will be specified later) and $\varrho_{x,y}$ a rotation depending only on $x-y$ in such a way that $\varrho_{x,y}=\varrho_{y,x}$. This is the case for Examples \ref{ex:Gabriel}--\ref{ex:TruncatedSlab}.
\end{itemize}

\subsection{Sets of class (S1)}

If $S(x,y)$ is of the form \eqref{eq:SetswithoutRot} for a compact star-shaped set $K\subset\RR^d$, assumptions (i), (ii), and (iv) are trivially satisfied. To verify assumption (iii), we rely on the for following geometric estimate taken from \cite{Schymura}.

\begin{lemma}\label{lem:geomConv}
Let {$K\in\mathcal{B}(\RR^d)$ be bounded} and let $c>0$ and $x\in\RR^d$ be such that $\|x\|\leq c\,{\rm diam}(K)$, where ${\rm diam}(K):=\sup\{\|u-v\|:u,v\in K\}$ is the diameter of $K$. Then, 
$$
\vol(K\setminus(K+x)) \geq {1\over c+1}{\vol(K)\over {\rm diam}(K)}\,\|x\|.
$$
\end{lemma}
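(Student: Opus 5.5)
We want to prove Lemma \ref{lem:geomConv}: for bounded measurable $K$ and $\|x\|\le c\,\mathrm{diam}(K)$,
$$
\vol(K\setminus(K+x))\ge \frac{1}{c+1}\,\frac{\vol(K)}{\mathrm{diam}(K)}\,\|x\|.
$$
The plan is to slice $K$ along lines parallel to $x$ and argue one line at a time, then integrate. Write $x=\|x\|e$ with $e\in\SS^{d-1}$. For $p\in e^\perp$ let $K_p:=\{s\in\RR:p+se\in K\}$, a bounded measurable subset of $\RR$. Its diameter is at most $D:=\mathrm{diam}(K)$. The section of $K+x$ along the same line is $K_p+\|x\|$. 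By Fubini,
$$
\vol(K\setminus(K+x))=\int_{e^\perp}\lambda_1\big(K_p\setminus(K_p+h)\big)\,\dint p
\qquad\text{and}\qquad
\vol(K)=\int_{e^\perp}\lambda_1(K_p)\,\dint p,
$$
where $h:=\|x\|\le cD$. It therefore suffices to prove the one-dimensional inequality
$$
\lambda_1(A\setminus(A+h))\ge \frac{h}{(c+1)D}\,\lambda_1(A)
$$
for every measurable $A\subset\RR$ with $A\subseteq[a,a+D]$ and $0<h\le cD$.

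For the one-dimensional step, split $[a,a+D]$ into consecutive intervals $I_j:=[a+jh,a+(j+1)h)$ for $j=0,\dots,N-1$, where $N=\lceil D/h\rceil$. Put $A_j:=(A\cap I_j)-jh\subseteq[a,a+h)$. A point $s\in A_j$ lies in $(A\setminus(A+h))-jh$ exactly when $s\in A_j\setminus A_{j-1}$, with the convention $A_{-1}=\varnothing$. Hence
$$
\lambda_1(A\setminus(A+h))=\sum_{j}\lambda_1(A_j\setminus A_{j-1}).
$$
For a fixed $s\in[a,a+h)$, the set $\{j:s\in A_j\}$ is a union of maximal runs of consecutive indices. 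Each run contributes exactly one index $j$ with $s\in A_j\setminus A_{j-1}$, namely its first index. Since there are at most $N$ indices, the number of $j$ with $s\in A_j$ is at most $N$ times the number of run starts at $s$. Integrating over $s$ gives
$$
\lambda_1(A)\le N\,\lambda_1(A\setminus(A+h)).
$$

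It remains to control $N$. We have $N\le D/h+1=(D+h)/h\le (c+1)D/h$, where the last inequality uses $h\le cD$. This is exactly where the hypothesis on $\|x\|$ enters: it lets the additive $+1$ be absorbed into the factor $(c+1)$. The case $h\ge D$ is also covered, since then $N=1$ and $A$ is disjoint from $A+h$ up to a null set. Integrating the one-dimensional bound over $p$ then yields the lemma.

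The main obstacle is the counting argument in the second paragraph: the runs and measurability need to be handled cleanly. If that becomes awkward, an alternative is to write $\mathbbm{1}_A=\sum_k(\mathbbm{1}_{A}-\mathbbm{1}_{A+h})(\cdot-kh)$ pointwise, valid because the set is bounded, and then count the nonzero terms.
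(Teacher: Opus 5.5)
Your proof is correct, and it takes a genuinely different route from the paper. The paper gives no direct argument. It observes that $\vol(K\setminus(K+x))=\vol((K+x)\setminus K)$, so the left-hand side equals $\frac12\vol(K\triangle(K+x))$, and then imports a lower bound for the symmetric difference from Schymura's Lemma~3.12 ``with a slight adaption of the argument there''; that adaptation is not spelled out. Your slicing-and-run-counting argument instead proves the estimate from scratch for arbitrary bounded Borel sets, which is exactly the generality of the statement. It also gives the sharper intermediate inequality $\vol(K\setminus(K+x))\ge \vol(K)/\lceil \mathrm{diam}(K)/\|x\|\rceil$. The hypothesis $\|x\|\le c\,\mathrm{diam}(K)$ is then used only to absorb the additive $1$ in your bound $\lceil D/h\rceil\le D/h+1$, which shows where the factor $1/(c+1)$ comes from. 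Two loose ends are harmless: the half-open intervals $I_j$ miss the endpoint $a+D$ when $D/h\in\NN$, which is a null set, and $x=0$ is trivial. The measurability you worried about is not an issue, since both counts are finite sums of indicators of measurable sets. You could also skip Fubini altogether. For $y\in K$, let $k\ge 0$ be the smallest index with $y-(k+1)x\notin K$. Then $y-kx\in K\setminus(K+x)$ and $k\|x\|\le \mathrm{diam}(K)$, so $K\subseteq\bigcup_{k=0}^{\lfloor \mathrm{diam}(K)/\|x\|\rfloor}\big((K\setminus(K+x))+kx\big)$. This gives $\vol(K)\le(\lfloor \mathrm{diam}(K)/\|x\|\rfloor+1)\,\vol(K\setminus(K+x))$ directly in $\RR^d$; it is the same run-start idea.
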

\begin{proof}
We start by writing $\vol((K+x)\triangle K) = \vol(K\setminus(K+x)) + \vol((K+x)\setminus K)$ for the symmetric difference $(K+x)\triangle K$. Moreover, we have $\vol(K\setminus(K+x)) = \vol((K+x)\setminus K)$ and
$$
\vol((K+x)\triangle K) \geq {2\over c+1}{\vol(K)\over {\rm diam}(K)}\,\|x\|
$$
from \cite[Lemma 3.12]{Schymura} and a slight adaption of the argument there. This proves the claim.
\end{proof}

{This estimate is now applied to the sets $(S(x,y))_{x,y\in\mathbb{R}^d}$.}

\begin{lemma}\label{lem:volest}
   Assume that {$S$ is as in \eqref{eq:SetswithoutRot} with a compact and star-shaped set $K\subset\RR^d$} and let $c>0$. Let $x,y,u,v \in \RR^d$ be such that $\lVert x-y \rVert \ge \lVert u-v \rVert$. Then,
\[
\vol(S(x,y)\setminus S(u,v)) \ge {1\over c+1}{\vol(K)\over {\rm diam}(K)} \Big\lVert \frac{x+y}{2} - \frac{u+v}{2} \Big\rVert \lVert x-y \rVert^{d-1},
\]
whenever {$\lVert \frac{x+y}{2} - \frac{u+v}{2} \rVert\leq c\,{\rm diam}(K) \lVert x-y \rVert$.}
\end{lemma}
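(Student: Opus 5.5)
The plan is to reduce to Lemma \ref{lem:geomConv} in two steps: first enlarge the smaller set $S(u,v)$, then rescale. Put $m:=\frac{x+y}{2}$, $m':=\frac{u+v}{2}$, $\ell:=\lVert x-y\rVert$ and $\ell':=\lVert u-v\rVert\le\ell$. Then $S(x,y)=m+\ell K$ and $S(u,v)=m'+\ell' K$.

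The first step uses the star-shapedness of $K$. Since $[0z]\subset K$ for every $z\in K$, we have $\lambda K\subseteq K$ for all $\lambda\in[0,1]$. Taking $\lambda=\ell'/\ell$ gives $\ell' K=\ell(\lambda K)\subseteq \ell K$, and therefore $S(u,v)\subseteq m'+\ell K$. If $\ell'=0$, the set $S(u,v)$ is the single point $m'$, so the same inclusion still holds. It follows that
\[
\vol(S(x,y)\setminus S(u,v))\ \ge\ \vol\big((m+\ell K)\setminus(m'+\ell K)\big)=\vol\big(\ell K\setminus(\ell K+(m'-m))\big),
\]
where the equality is translation invariance of volume.

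The second step applies Lemma \ref{lem:geomConv} to the bounded set $\ell K$ with shift $m'-m$. Its hypotheses are $\vol(\ell K)=\ell^d\vol(K)$, ${\rm diam}(\ell K)=\ell\,{\rm diam}(K)$, and $\lVert m'-m\rVert\le c\,{\rm diam}(K)\,\ell = c\,{\rm diam}(\ell K)$; the last one is exactly the assumption of the lemma. The conclusion is
\[
\vol\big(\ell K\setminus(\ell K+(m'-m))\big)\ \ge\ \frac{1}{c+1}\,\frac{\ell^d\vol(K)}{\ell\,{\rm diam}(K)}\,\lVert m-m'\rVert,
\]
which is the claimed bound after simplifying $\ell^d/\ell=\ell^{d-1}$. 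If $\ell=0$, both sides vanish and there is nothing to prove.

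The only point needing care is the monotonicity $\ell' K\subseteq\ell K$. Star-shapedness with respect to the origin is precisely what gives this, since $K$ need not contain a neighbourhood of $0$. Beyond that check, I expect no real obstacle: the statement is a direct rescaled corollary of Lemma \ref{lem:geomConv}.
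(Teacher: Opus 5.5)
Your proposal is correct and follows essentially the same route as the paper's proof. You use star-shapedness to enlarge $S(u,v)$ to $\frac{u+v}{2}+\lVert x-y\rVert K$, then apply Lemma \ref{lem:geomConv} to the rescaled set using $\mathrm{diam}(\lVert x-y\rVert K)=\lVert x-y\rVert\,\mathrm{diam}(K)$ and $\vol(\lVert x-y\rVert K)=\lVert x-y\rVert^d\vol(K)$. The only differences are cosmetic: the paper works with the translate $\frac{x+y}{2}+\lVert x-y\rVert K$ instead of $\lVert x-y\rVert K$, and your treatment of the degenerate cases $\ell'=0$ and $\ell=0$ is extra care that the paper skips.
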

\begin{proof} Using that $K$ is star-shaped, it follows that
    \begin{align*}
\vol(S(x,y)\setminus S(u,v)) &\ge \vol\Big(\Big(\frac{x+y}{2} + \lVert x-y \rVert K\Big)\setminus \Big(\frac{u+v}{2} + \lVert x-y \rVert K\Big)\Big).
\end{align*}
Defining $L:=\frac{x+y}{2} + \lVert x-y \rVert K$ and $z:= \frac{u+v}{2}-\frac{x+y}{2}$, we can thus write
$$
\vol(S(x,y)\setminus S(u,v)) \ge \vol(L\setminus(L+z)) \geq {1\over c+1}{\vol(L)\over {\rm diam}(L)}\,\|z\|
$$
according to Lemma \ref{lem:geomConv}, whenever $\|z\|\leq c\,{\rm diam}(L)$. Using that ${\rm diam}(L)=\|x-y\|{\rm diam}(K)$ and $\vol(L)=\|x-y\|^d\,\vol(K)$, the result follows.
\end{proof}

{Note that $S(x,y)\cap S(u,v)=\varnothing$ whenever $\lVert \frac{x+y}{2} - \frac{u+v}{2} \rVert\geq {\rm diam}(K) \lVert x-y \rVert$ and $\|x-y\|\geq\|u-v\|$ because then the superset
$$
\bigg(\frac{x+y}{2} + \|x-y\| K \bigg) \cap \bigg(\frac{u+v}{2} + \|x-y\| K \bigg)
$$
of the intersection is empty. Thus, choosing $c=1$ in the previous lemma verifies assumption (iii) of Theorem \ref{thm:PoisConvAbstract} for Example \ref{ex:ConvBody}, and eventually completes the proofs of Theorems \ref{th:PoisConv} and \ref{th:gumb} in that case. }

\subsection{Sets of class (S2)}

Our goal is to verify assumptions (i)--(iv) of Theorem \ref{thm:PoisConvAbstract} for Examples \ref{ex:Gabriel}--\ref{ex:TruncatedSlab}. Clearly, (i) and (ii) are satisfied as the involved sets $K$ are bounded and have positive volume. Moreover, (iv) holds by assumption on the rotations $\varrho_{x,y}$. It remains to verify (iii). For that purpose, we need the following geometric result. 
Throughout this section, we denote the $(d-1)$-dimensional volume of a subset of a $(d-1)$-dimensional affine subspace of $\RR^d$ also by $\vol$.

\begin{proposition}\label{prop:geom}
For $d\in\mathbb{N}$ with $d\ge 2$, $u\in(0,\infty)$, and a continuous symmetric function $R:[-u,u]\to[0,\infty)$ let
$$
K := \Big\{ (x_1,\hdots,x_d)\in \RR^d: x_1\in[-u,u], \sum_{i=2}^{d} x_i^2\leq R(x_1)^2 \Big\}.
$$
Assume that there exist $x_K\in K$ and $r_K>0$ such that $B^d(x_K,r_K)\subset K$. Then one can find a constant $c_K\in(0,\infty)$ only depending on $K$ and $d$ such that
\begin{equation}\label{eq:S1KVolumeLowerBound}
\vol(K\setminus (\varrho K + x)) \geq c_K \|x\|    
\end{equation}
for all rotations $\varrho$ and all $x\in\RR^d$ with $K\cap(\varrho K+x)\neq\varnothing$.
\end{proposition}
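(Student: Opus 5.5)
The plan is a centroid argument, which gives \eqref{eq:S1KVolumeLowerBound} at once for all $x$, with no need to split into small and large $\|x\|$. Set $A:=K$ and $B:=\varrho K+x$. Rotations and translations preserve volume, so $\vol(A)=\vol(B)=\vol(K)$. This volume is positive because $B^d(x_K,r_K)\subset K$, and it is finite because $K$ is compact ($R$ is continuous on $[-u,u]$). Hence $\vol(A\setminus B)=\vol(B\setminus A)$ and
$$\vol(A\triangle B)=2\vol(K\setminus(\varrho K+x)).$$

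The first step is to compute the two centroids. $K$ is invariant under $x_1\mapsto -x_1$, since $R$ is symmetric. It is also invariant under every orthogonal map of the last $d-1$ coordinates, in particular under each reflection $x_i\mapsto -x_i$ for $i\ge 2$. Therefore $\int_K y\,\dint y=0$, so the centroid of $K$ is the origin. Substituting $y=\varrho w+x$ gives
$$\int_{\varrho K+x} y\,\dint y=\varrho\int_K w\,\dint w+x\vol(K)=x\vol(K).$$
So the centroid of $B$ is exactly $x$, whatever the rotation $\varrho$.

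The second step compares the centroids via the symmetric difference:
$$\|x\|\vol(K)=\Big\|\int_{\RR^d}y\,(\mathbbm{1}_B(y)-\mathbbm{1}_A(y))\,\dint y\Big\|\le\int_{A\triangle B}\|y\|\,\dint y\le M\vol(A\triangle B),$$
where $M:=\sup\{\|y\|:y\in A\cup B\}$.

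The third step bounds $M$ using the hypothesis $K\cap(\varrho K+x)\neq\varnothing$. Let $\rho_K:=\max\{\|y\|:y\in K\}=\max_{s\in[-u,u]}\sqrt{s^2+R(s)^2}$, which is finite. A common point $p=\varrho w+x$ with $w\in K$ and $p\in K$ gives $\|x\|\le\|p\|+\|\varrho w\|\le 2\rho_K$. Consequently every point of $B$ has norm at most $\rho_K+\|x\|\le 3\rho_K$, so $M\le 3\rho_K$.

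Combining the three steps,
$$\vol(K\setminus(\varrho K+x))=\tfrac12\vol(A\triangle B)\ge\frac{\vol(K)}{6\rho_K}\,\|x\|,$$
which proves \eqref{eq:S1KVolumeLowerBound} with $c_K:=\vol(K)/(6\rho_K)\in(0,\infty)$. This constant depends only on $K$ and $d$.

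The only step needing care is the first: the centroid of $\varrho K+x$ must be exactly $x$ for every rotation $\varrho$. This is what rules out the potential obstacle that a rotation alone could make $K$ and $\varrho K+x$ differ by a large set while $x$ is small. That situation only makes the left-hand side larger, so it causes no difficulty. I would therefore not pursue a compactness or continuity argument in $(\varrho,x)$, because the centroid identity handles all cases uniformly.
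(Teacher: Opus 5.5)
Your proof is correct, and it takes a genuinely different and much shorter route than the paper.

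The paper argues via $(d-1)$-dimensional sections. It builds an orthonormal frame $a_{1,\varrho},\dots,a_{d,\varrho}$ adapted to $\varrho$: the normalised bisectors of $e_1$ and $\varrho e_1$, completed to a basis. Using the rotational symmetry and evenness of $R$, it shows that sections of $K$ and $\varrho K$ orthogonal to each $a_{i,\varrho}$ have equal volume. It then applies Cavalieri's principle to half-spaces through the centre $x_K$ of the inscribed ball, obtaining a bound of order $r_K^{d-1}\min\{\|x\|/\sqrt{d},r_K/2\}$. The case $\varrho e_1=\pm e_1$ is treated separately with Lemma~\ref{lem:geomConv}, and larger $\|x\|$ is handled by a continuity and compactness argument, so the final constant is not explicit.

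Your first-moment identity replaces all of this. It uses only that $K$ is compact, has positive volume, and has its barycentre at the origin, which is the common fixed point of all rotations. It gives the explicit constant $c_K=\vol(K)/(6\rho_K)$ uniformly in $\varrho$, with no case distinction.

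Consequently, the inscribed ball is needed only to ensure $\vol(K)>0$, and the body-of-revolution structure only to force the barycentre to be $0$. Your argument therefore proves the proposition for every compact $K$ of positive volume with barycentre $0$. Some condition of this kind cannot be dropped: take $K=B^d(g,r)$ with $g\neq 0$ and a rotation with $\varrho g\neq g$. Then $x=g-\varrho g$ gives $\varrho K+x=K$, so the left-hand side vanishes while $\|x\|>0$.

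Running the same computation centred at the barycentre also recovers the translation estimate of Lemma~\ref{lem:geomConv}, with constant $\frac{1}{2(c+1)}$ instead of $\frac{1}{c+1}$. So your method handles the classes (S1) and (S2) uniformly.

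The paper's slicing argument exploits the finer symmetry of $K$. For the purpose it serves here, verifying assumption (iii) of Theorem~\ref{thm:PoisConvAbstract}, your approach is simpler, more general, and fully explicit.
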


\begin{proof}
Let a rotation $\varrho$ and $x\in\RR^d$ be given and let $v_\varrho:=\varrho e_1$ with $e_1:=(1,0,\hdots,0)\in\RR^d$. {Moreover, we assume that $v_\varrho\neq \pm e_1$.} We denote by $E_\varrho$ the linear subspace generated by $e_1$ and $v_\varrho$ and define $a_{\varrho,1}:=\frac{e_1+v_\varrho}{\|e_1+v_\varrho\|}$ and $a_{\varrho,2}:=\frac{e_1-v_\varrho}{\|e_1-v_\varrho\|}$, which are orthogonal. {By the assumed symmetry properties of $K$, $\varrho K\cap E_\varrho$ is the mirror image of $K\cap E_\varrho$ with respect to the lines generated by $a_{1,\varrho}$ and $a_{2,\varrho}$ in $E_\varrho$, which implies that}
\begin{equation}\label{eqn:Equality_varrho_I}
\vol(K \cap(t a_{i,\varrho} + a_{i,\varrho}^\perp)) = \vol(\varrho K \cap(t a_{i,\varrho} + a_{i,\varrho}^\perp))
\end{equation}
for all $t\in\RR$ and $i\in\{1,2\}$. For $d\geq 3$ let $a_{3,\varrho},\hdots,a_{d,\varrho}\in\RR^d$ be such that $a_{1,\varrho},\hdots,a_{d,\varrho}$ form an orthonormal base of $\RR^d$. Then we have
\begin{equation}\label{eqn:Equality_varrho_II}
\vol(K \cap(t a_{i,\varrho} + a_{i,\varrho}^\perp)) = \vol(\varrho K \cap(t a_{i,\varrho} + a_{i,\varrho}^\perp))
\end{equation}
for all $t\in\RR$ and $i\in\{3,\hdots,d\}$ because $ta_{i,\varrho} +a_{i,\varrho}^\perp$ is parallel to the lines generated by $e_1$ and $v_\varrho$, and has the same distance to both of them.

Since $K$ and $\varrho K+x$ have the same volume, we obtain
\begin{equation}\label{eqn:symmetric_difference}
\vol(K\setminus (\varrho K + x)) = \frac{1}{2} \vol(K\triangle (\varrho K + x)).
\end{equation}
For $i\in\{1,\ldots,d\}$ let $H_i:=\{y\in\mathbb{R}^d: \langle a_{i,\varrho},y \rangle \leq \langle a_{i,\varrho},x_K \rangle \}$ be the half-space through $x_K$ whose bounding hyperplane has the normal vector $a_{i,\varrho}$ pointing in the outward direction. Note that
\begin{equation}\label{eqn:symmetric_difference_half-space}
\vol(K {\triangle} (\varrho K + x)) \geq \big| \vol(K\cap H_i) - \vol((\varrho K + x)\cap H_i) \big|.
\end{equation}
It follows from Cavalieri's principle that
\begin{align*}
& \vol(K\cap H_i) - \vol((\varrho K + x)\cap H_i) \\
& = \int_{-\infty}^{\langle a_{i,\varrho},x_K\rangle} \vol(K\cap (ta_{i,\varrho}+a_{i,\varrho}^\perp)) \, \dint t - \int_{-\infty}^{\langle a_{i,\varrho},x_K\rangle} \vol((\varrho K + x)\cap(ta_{i,\varrho}+a_{i,\varrho}^\perp)) \, \dint t.
\end{align*}
For $t\in\RR$ we have
\begin{align*}
\vol((\varrho K + x)\cap (t a_{i,\varrho}+a_{i,\varrho}^\perp)) & = \vol(\varrho K \cap (t a_{i,\varrho}-x+a_{i,\varrho}^\perp)) = \vol(\varrho K \cap ((t-\langle a_{i,\varrho},x\rangle) a_{i,\varrho}+a_{i,\varrho}^\perp)) \\
& = \vol(K \cap ((t-\langle a_{i,\varrho},x\rangle) a_{i,\varrho}+a_{i,\varrho}^\perp)),
\end{align*}
where we used \eqref{eqn:Equality_varrho_I} for $i\in\{1,2\}$ and \eqref{eqn:Equality_varrho_II} for $i\in\{3,\hdots,d\}$ in the last step. This implies that
\begin{align*}
& \vol(K\cap H_i) - \vol((\varrho K + x)\cap H_i) \\
& = \int_{-\infty}^{\langle a_{i,\varrho},x_K\rangle} \vol(K\cap (ta_{i,\varrho}+a_{i,\varrho}^\perp)) \, \dint t - \int_{-\infty}^{\langle a_{i,\varrho},x_K\rangle} \vol(K \cap ((t-\langle a_{i,\varrho},x\rangle) a_{i,\varrho}+a_{i,\varrho}^\perp)) \, \dint t \\
& = \int_{-\infty}^{\langle a_{i,\varrho},x_K\rangle} \vol(K\cap (ta_{i,\varrho}+a_{i,\varrho}^\perp)) \, \dint t - \int_{-\infty}^{\langle a_{i,\varrho},x_K\rangle-\langle a_{i,\varrho},x\rangle} \vol(K \cap (t a_{i,\varrho}+a_{i,\varrho}^\perp)) \, \dint t \\
& = \int_{\langle a_{i,\varrho},x_K\rangle-\langle a_{i,\varrho},x\rangle}^{\langle a_{i,\varrho},x_K\rangle} \vol(K\cap (ta_{i,\varrho}+a_{i,\varrho}^\perp)) \, \dint t.
\end{align*}
The absolute value of the right-hand side can be bounded from below by
$$
\bigg|\int_{\langle a_{i,\varrho},x_K\rangle-\langle a_{i,\varrho},x\rangle}^{\langle a_{i,\varrho},x_K\rangle} \vol(B^d(x_K,r_K)\cap (ta_{i,\varrho}+a_{i,\varrho}^\perp)) \, \dint t\bigg| \geq \kappa_{d-1}(r_K/2)^{d-1} \min\{|\langle a_{i,\varrho},x\rangle|,r_K/2\},
$$
where we used that $B^d(x_K,r_K)\cap (ta_{i,\varrho}+a_{i,\varrho}^\perp)$ contains a $(d-1)$-dimensional ball of radius $r_K/2$ if $t\in[ \langle a_{i,\varrho},x_K\rangle-r_K/2,\langle a_{i,\varrho},x_K\rangle+r_K/2]$. Combining this with \eqref{eqn:symmetric_difference} and \eqref{eqn:symmetric_difference_half-space} yields
$$
\vol(K\setminus (\varrho K + x)) \geq \frac{\kappa_{d-1}r_K^{d-1}}{2^d} \min\{|\langle a_{i,\varrho},x\rangle|,r_K/2\}.
$$
Since $a_{1,\varrho},\hdots,a_{d,\varrho}$ form an orthonormal base, we have $\sum_{i=1}^d \langle a_{i,\varrho},x\rangle^2=\|x\|^2$. Thus, there exists some $i_0\in\{1,\hdots,d\}$ such that 
$$
\frac{1}{\sqrt{d}} \|x\| \leq|\langle a_{i_0,\varrho},x\rangle|\leq \|x\|.
$$
The two previous inequalities lead {to
$$
\vol(K\setminus (\varrho K + x)) \geq \frac{\kappa_{d-1}r_K^{d-1}}{2^d} \min\{\|x\|/\sqrt{d},r_K/2\}.
$$
For rotations $\varrho$ such that $v_\varrho=\pm e_1$ we have $\varrho K= K$ so that we can apply Lemma \ref{lem:geomConv} with $c=\sqrt{d}r_K/(2\operatorname{diam}(K))$. Thus, there exists a constant $c_K'\in(0,\infty)$ such that
\begin{equation}\label{eqn:lower_bound_smaller}
\vol(K\setminus (\varrho K + x)) \geq c_K' \|x\|
\end{equation}
for all $x\in\mathbb{R}^d$ with $\|x\|\le\sqrt{d}r_K/2$ and all rotations $\varrho$.}
So it remains to show the desired inequality for all rotations $\varrho$ and $x\in \mathbb{R}^d$ with $\sqrt{d}r_K/2<\|x\|\leq r_0$, where $r_0$ is given by
$$
r_0:=\sup \{\| y\|: y\in\mathbb{R}^d, (\tilde{\varrho}K+y)\cap K\neq \varnothing \text{ for some rotation }\tilde{\varrho}\}.
$$
Next we note that \eqref{eqn:lower_bound_smaller} especially ensures that
$$
\frac{\vol(K\setminus (\varrho K+x))}{\|x\|}>0
$$
for all rotations $\varrho$ and $x\in\RR^d$ with $\|x\|\geq \sqrt{d}r_K/2$. Since, for a fixed $x\in\RR^d$ and a fixed rotation $\varrho$, the function $(\tilde{x},\tilde{\varrho})\mapsto \mathbbm{1}\{y\in\tilde{\varrho} K +\tilde{x}\}$ is continuous at $(x,\varrho)$ for almost every $y\in\mathbb{R}^d$, rewriting the volume as integral and using the dominated convergence theorem shows that the function
$$
(x,\varrho)\mapsto \frac{\vol(K\setminus (\varrho K+x))}{\|x\|}
$$ 
is continuous for all $x\in\mathbb{R}^d\setminus\{0\}$ and rotations $\varrho$. Since a continuous function attains a minimum on a compact set, there exists a constant $\tilde{c}\in(0,\infty)$ such that
$$
\frac{\vol(K\setminus (\varrho K+x))}{\|x\|}\geq {\tilde{c}}
$$
for all rotations $\varrho$ and all $x\in\RR^d$ such that $\sqrt{d}r_K/2\leq \|x\|\leq r_0$. Together with \eqref{eqn:lower_bound_smaller}, this completes the proof.
\end{proof}

The sets $K$ used in Examples \ref{ex:Gabriel}--\ref{ex:TruncatedSlab} meet the requirements of Proposition \ref{prop:geom} so that they satisfy \eqref{eq:S1KVolumeLowerBound}. We observe that in all these cases, the set $K$ is star-shaped.

What remains to verify is that for sets of class (S2) with star-shaped $K$, property \eqref{eq:S1KVolumeLowerBound} implies assumption (iii) of Theorem \ref{thm:PoisConvAbstract}. Using the star-shapedness of $K$ together with \eqref{eq:S1KVolumeLowerBound}, we obtain, {for $x,y,u,v\in\mathbb{R}^d$ with $\|x-y\|\ge\|u-v\|$,}
\begin{align*}
&\vol\Big(\Big({x+y\over 2}+\|x-y\|\varrho_{x,y}K\Big)\setminus\Big({u+v\over 2}+\|u-v\|\varrho_{u,v}K\Big)\Big)\\
&=\vol\Big(\|x-y\|\varrho_{x,y}K\setminus\Big({u+v\over 2}-{x+y\over 2}+\|u-v\|\varrho_{u,v}K\Big)\Big)\\
&\geq \vol\Big(\|x-y\|\varrho_{x,y}K\setminus\Big({u+v\over 2}-{x+y\over 2}+\|x-y\|\varrho_{u,v}K\Big)\Big)\\
&= \|x-y\|^d\,\vol\Big(K\setminus\Big(\|x-y\|^{-1}\varrho_{x,y}^{-1}\Big({u+v\over 2}-{x+y\over 2}\Big)+\varrho_{x,y}^{-1}\varrho_{u,v}K\Big)\Big)\\
&\geq \|x-y\|^d\,c_K\,\|x-y\|^{-1}\Big\|{u+v\over 2}-{x+y\over 2}\Big\|\\
&= c_K\,\|x-y\|^{d-1}\Big\|{u+v\over 2}-{x+y\over 2}\Big\|
\end{align*}
with the constant $c_K$ from \eqref{eq:S1KVolumeLowerBound}. This completes the proof of Theorems \ref{th:PoisConv} and \ref{th:gumb} for Examples \ref{ex:Gabriel}--\ref{ex:TruncatedSlab}.

\subsection*{Acknowledgement}
{Parts of this paper were written when the authors were participants of the Dual Trimester Program \textit{Synergies Between Modern Probability, Geometric Analysis and Stochastic Geometry} at the Hausdorff Research Institute for Mathematics, Bonn. All support is gratefully acknowledged.

HS was supported by the DFG via SFB 1283 \textit{Taming Uncertainty and Profiting From Randomness and low Regularity in Analysis, Stochastics and Their Applications}.}
MS and CT were supported by the DFG via SPP 2265 \textit{Random Geometric Systems}. CT was also supported by the DFG via SPP 2458 \textit{Combinatorial Synergies}.

\end{document}